\numberwithin{equation}{section}
\newtheorem{thm}{Theorem}[section]
\newtheorem{lem}[thm]{Lemma}
\newtheorem{cor}[thm]{Corollary}
\theoremstyle{definition}
\newtheorem{prop}[thm]{Proposition}
\theoremstyle{definition}
\newtheorem{defn}[thm]{Definition}
\theoremstyle{definition}
\theoremstyle{definition}
\newtheorem{rem}[thm]{Remark}
\newtheorem{remark}[thm]{Remark}
\newcommand{\cS}{\mathcal{S}}
\newcommand{\f}{\phi}
\newcommand{\ra}{\rightarrow}
\newcommand{\sg}{\sigma}
\def\du{\bigoplus}
\DeclareMathOperator{\Int}{Int}
\newcommand{\N}{\mathbb N}
\newcommand{\R}{\mathbb R}
\newcommand{\Z}{\mathbb Z}
\newcommand{\C}{\mathbb C}
\newcommand{\one}{\mathbf{1}}
\newcommand{\D}{\mathbb D}
\renewcommand{\L}{\mathcal L}
\newcommand{\fs}{\mathcal{S}}
\newcommand{\cfe}{\mathcal{C} \mathcal{F}_E}
\newcommand{\dist}{\text{dist}}
\newcommand{\diam}{\text{diam}}
\newcommand{\err}{\text{err}}
\newcommand{\Fin}{ \text{Fin}}
\newcommand{\cH}{\mathcal{H}}
\newcommand{\dimh}{\text{dim}_\mathcal{H}}
\newcommand{\om}{\omega}
\newcommand{\normp}[1]{\left\lVert#1\right\rVert_P}
\title{Rigorous Hausdorff dimension estimates for conformal fractals}
\author{Vasileios Chousionis}
\address{Department of Mathematics, University of Connecticut}
\email{vasileios.chousionis@uconn.edu}
\author{Dmitriy Leykekhman}
\address{Department of Mathematics, University of Connecticut}
\email{dmitriy.leykekhman@uconn.edu}
\author{Mariusz Urba\'nski}
\address{Department of Mathematics, University of North Texas}
\email{mariusz.urbanski@unt.edu}
\author{Erik Wendt}
\address{Department of Mathematics, University of Connecticut}
\email{erik.wendt@uconn.edu}
\thanks{V.~C.\ was supported by Simons Foundation Collaboration grant 521845 and by  NSF grant 2247117. M.U. was supported by Simons Foundation Collaboration grant 581668.}
\date{\today}
\begin{document}

\begin{abstract} We develop a versatile framework which allows us to rigorously estimate the Hausdorff dimension of maximal conformal graph directed Markov systems in $\mathbb{R}^n$ for $n \geq 2$. Our method is based on piecewise linear approximations of the eigenfunctions of the Perron-Frobenius operator via a finite element framework for discretization. One key element in our approach is obtaining bounds for the derivatives of these  eigenfunctions, which,  besides being essential for the implementation of our method, are of independent interest.
\end{abstract}

\maketitle
\tableofcontents

%%%%%%%%%%%%%%%%%%%%%%%%%%%%%%%%%%%%%%%%%%%%%%%%%%%%%%%%%%%%%
\section{Introduction}

 Understanding and determining the Hausdorff dimension of various and diverse attractors has played a crucial role in advancing the fields of fractal geometry and dynamical systems. In particular, one of the most influential results in iterated function systems, due to Hutchinson \cite{hutch}, asserts that if $\mathcal{S}=\{\phi_i\}_{i =1}^k$ is a set of similitudes which satisfies the open set condition, and $J$ is the unique compact set such that $J=\cup_{i=1}^k \phi_i(J)$ (frequently called the \textit{limit set} or the \textit{attractor} of $\mathcal{S}$), then 
$\dim_{\mathcal{H}}(J)$ is the parameter $ t \in [0,\infty)$ so that 
\begin{equation}
\label{eq:hutch}
\sum_{i =1}^k r_i^t = 1,
\end{equation}
where $r_i \in (0,1)$ are the contraction ratios of the maps $\phi_i$.

The dimension theory of \textit{conformal iterated function systems} (CIFS) is much more complex. In \cite{MUCIFS} Mauldin and the third named author employed thermodynamic formalism to determine the Hausdorff dimension of limit sets of CIFSs. According to \cite{MUCIFS}, given a finite or countable collection of uniformly contracting conformal maps which satisfies some natural assumptions then the Hausdorff dimension of its limit set coincides with the zero of a corresponding (topological) pressure function, see Section \ref{sec:prelim} for more details. We note that that this approach traces back to the the fundamental work of Rufus Bowen \cite{bowen}, and frequently the zero of the previously mentioned pressure function is called the Bowen's parameter. Using Hutchinson's formula \eqref{eq:hutch} one can determine the Hausdorff dimension of self similar sets with very high precision. However, due to the complexity of the pressure function, obtaining rigorous and effective estimates for the Hausdorff dimension of self-conformal sets is significantly subtler. 

Consider for example the set of irrational numbers whose continued fraction expansion can only contain digits from a prescribed set $E \subset \N$, i.e.
$$J_E=\left\{ [e] : e \in E^{\N}\right\}
\quad\mbox{ where } \quad
[e]=[e_1, e_2, \ldots] = \cfrac{1}{e_1 + \cfrac{1}{e_2 + \ldots}}.$$
Quite conveniently, the set $J_E$ is the limit set of the CIFS
$\cfe=\{\phi_e:[0,1] \ra [0,1]\}_{e \in E},$
where 
$$\phi_{e} (x) = \frac{1}{e+x}.$$
Estimating $\dim_{\cH}(J_E)$ for $E \subset \N$ is of particular historical and contemporary interest. The problem first appeared in Jarnik's work \cite{jarnik} during the late 1920s  in relation to \textit{Diophantine approximation} and \textit{badly-approximable numbers}. Specifically, Jarnik obtained dimension estimates when $E=\{1,2\}$. Jarnik's result was subsequently improved and extended by many authors \cite{bu2, bu1, cu1, cu2, cu3, hiddenpos, Falk, good,   hentex, hen1, henbook, hen2, jenktexan, JenkinsonO_PollicottM_2001, poli,  Pollicott_Vytnova_2022}. Notably, Pollicott and Vytnova in \cite{Pollicott_Vytnova_2022}, were able to rigorously estimate $\dim_{\cH} (J_{1,2})$ with an accuracy of 200 digits. They used the zeta function—an approach introduced in this topic by Pollicott and his collaborators in previous studies—along with their ``bisection method" to deliver very precise estimates for $\dim_{\cH}(J_E)$ when the alphabet $E$ is quite specific (for example when $E$ is an initial segment of $\N$ or specific arithmetic progressions). Additionally, rigorous bounds for $\dim_{\cH}(J_E)$ were needed in a seminal work by Kontorovich and Bourgain \cite{kontor} and follow up work of Huang \cite{huang} to prove an almost everywhere version of Zaremba's Conjecture. More precisely, lower bounds for $\dim_{\cH}(J_{\{1,2,\dots, 50\}})$ and $\dim_{\cH}(J_{\{1,2,\dots, 5\}})$ were respectively employed in \cite{kontor} and \cite{huang}. These bounds were justified rigorously in \cite{polcont} and they also follow from \cite{Falk_Nussbaum_2016}.

Falk and Nussbaum \cite{Falk_Nussbaum_2016,Falk, FalkRS_NussbaumRD_2016b}, developed a quite versatile method in order to provide rigorous estimates for CIFSs arising from continued fraction algorithms, both real and complex. In \cite{CLU2} the three first-named authors further refined the Falk-Nussbaum method in order to rigorously estimate $\dim_{\cH}(J_E)$ for a wide variety of subsets $E \subset \N$, such as the primes, various powers, arithmetic progressions, etc. These estimates played a crucial in the study of the dimension spectrum of continued fractions with restricted digits in \cite{CLU2}, and they were also recently used in \cite{DasSim}.

Although several dimensional estimates for conformal fractals have been known for some time (especially for limit sets of real continued fractions and for the Apollonian gasket), Falk and Nussbaum \cite{Falk_Nussbaum_2016, Falk, FalkRS_NussbaumRD_2016b} were the first ones to obtain rigorous and effective Hausdorff dimension estimates for CIFSs not consisting of similitudes. Their approach motivated several recent advances in the area focusing on obtaining \textit{rigorous} dimensional estimates. 

So far we have only discussed rigorous Hausdorff dimension estimates for one very specific family of CIFSs in the real line. As it happens, there exist very few rigorous dimension estimates for other CIFSs. Falk and Nussbaum \cite{FalkRS_NussbaumRD_2016b} obtained rigorous dimension estimates for complex continued fractions and Vytnova and Wormell \cite{Vytnova_Wormell_2025} recently obtained very sharp dimension estimates for the Apollonian gasket (which as discovered in \cite{MUapol} can be viewed as an infinite CIFS). These approaches are fundamentally based on the specifics of the aforementioned systems. Our goal in this paper is to develop a versatile method that can provide rigorous and effective Hausdorff dimension estimates for a very broad family of conformal fractals. 

We will focus our attention on dimension estimates of limit sets in the general framework of {\em conformal  graph directed Markov systems} (CGDMS). For the moment, we will only describe CGDMSs briefly and we will discuss them in more detail in Section \ref{sec:prelim}. A CGDMS in $\R^n$ is structured around a directed multigraph $(E,V)$ with a countable set of edges $E$ and a finite set of vertices $V$, and an incidence matrix $A:E \times E \ra \{0,1\}$. Each vertex $v \in V$ corresponds to a pair of sets $(X_v,W_v), \ X_v, W_v \subset \R^n$ such that $X_v$ is compact and connected, $W_v$ is open and connected and $X_v \subset W_v$. For each each edge $e \in E$ there exists a contracting map $\phi_{e}:X_{t(e)} \ra X_{i(e)}$ which extends to $C^1$ conformal diffeomorphism from $W_{t(e)}$ into $W_{i(e)}$. The incidence matrix $A:E \times E \ra \{0,1\}$ determines if a pair of these maps is allowed to be composed. A CGDMS is called \textit{maximal} when $t(a)=i(b)$ if and only if $A_{a,b}=1$; i.e. all possible compositions are admissible. 

Assuming two natural conditions; \textit{finite irreducibility} (which can be thought of as a non-degeneracy condition of the graph) and \textit{Open Set Condition (OSC)} (which is a natural separation condition ensuring limited overlap) Mauldin and the third-named author in \cite{Mauldin_Urbanski_2003} developed a rich and robust dimension theory of CGDMSs, see also \cite{CTU, polur, MRUvol3, KU} for related recent advances. Limit sets of maximal and finitely irreducible CGDMSs encompass a diverse range of geometric objects, including limit sets of Kleinian groups, complex hyperbolic Schottky groups, Apollonian circle packings, as well as self-conformal and self-similar sets. This diversity justifies our focus on studying dimension estimates within the unified framework of CGDMSs. 

%We will develop a method for rigorously estimating the Hausdorff dimension of the limit sets of \textit{any} maximal and finitely irreducible CGDMSs in $\R^n, n \geq 2$. Limit sets of maximal and finitely irreducible CGDMSs encompass a diverse range of geometric objects, including limit sets of Kleinian groups, complex hyperbolic Schottky groups, Apollonian circle packings, as well as self-conformal and self-similar sets. This diversity justifies our focus on studying dimension estimates within the unified framework of CGDMSs. We stress that, although our method can be applied to any maximal and finitely irreducible CGDMSs in $\R^n, n \geq 2$, its effectiveness depends on the specifics of each system. 

%Notably, the SOSC is satisfied by all finitely irreducible and maximal CGDMSs which consist of finitely many generators, see \cite[Theorem 11.6]{CTU}. 

 Our approach, which is inspired by the work of Falk and Nussbaum \cite{Falk_Nussbaum_2016,Falk, FalkRS_NussbaumRD_2016b}, relies on piecewise linear approximations of the eigenfunctions of the following \textit{Perron-Frobenius operator}. Given any maximal and finitely irreducible CGDMS $\cS$ we define the Perron-Frobenius operator
\begin{equation*}
 F_t: C(X) \ra C(X), \quad \quad F_t(g)(x) = \sum_{e \in E} \|D\phi_e(x)\|^t g(\phi_e(x)) \chi_{X_{t(e)}}(x),
\end{equation*}
where $X=\cup_{v \in V} X_v$ and $t$ is any parameter such that $P(t)$, the topological pressure of the system evaluated at $t$, is finite. For any such parameter $t$, there exists a unique continuous function $\rho_t: X \rightarrow [0,\infty)$ so that
\begin{equation}\label{eq: F_t rho}
F_t (\rho_t) = e^{P(t)} \rho_t,
\end{equation}
see Section \ref{sec:rn} for more details. 

Since the Hausdorff dimension of the limit set of a CGDMS is the zero of its pressure function $P(t)$, it follows from \eqref{eq: F_t rho} that it coincides with the parameter $t^*$, for which the Perron-Frobenius operator $F_{t^*}$ has $1$ as the leading eigenvalue. So, instead of trying to compute directly the zero of $P(t)$, one can try to estimate $t^\ast$. Especially if the corresponding eigenfunction $\rho_{t^*}$ is smooth with derivatives that can be estimated, then this alternative approach has proven to be very effective and has led to several rigorous computational methods for estimating  
the Hausdorff dimension of the limit set.

One such method is based on the following fact:  if for some positive function $g>0$, $F_{\bar{t}}g<g$, then $P(\overline{t})<0$ and if 
$F_{\underline{t}}g>g$, then $P(\underline{t})>0$.  As a result, we get $\underline{t}\le t^* \le \overline{t}$, and if the interval $[\underline{t}, \overline{t}]$ is small, one obtains a rigorous and effective estimate for the Hausdorff dimension of the limit set. Thus, the main task in this method is to construct such functions $g$. In the recent work \cite{Pollicott_Vytnova_2022}, Pollicott and Vytnova 
constructed  the desired functions $g$ as global polynomials. Once the basis is chosen, the problem of computing the parameters $\underline{t}$ and $\overline{t}$ reduces to a finite dimensional linear algebra problem. For certain problems this approach yields very impressive results with many digits of accuracy; see for example the aforementioned paper \cite{Pollicott_Vytnova_2022}, where highly accurate estimates are obtained for several one dimensional continued fractions susbsystems, and the very recent paper of Vytnova and Wormell \cite{Pollicott_Vytnova_2022,Vytnova_Wormell_2025} where the Hausdorff dimension of the Apollonian gasket is estimated with high precision. We note however that this approach is heavily problem dependent and it is not straightforward to extend it to higher dimensional problems. 

Inspired by the work of Falk and Nussbaum \cite{Falk_Nussbaum_2016, Falk,  FalkRS_NussbaumRD_2016b}, we develop a  universal method, which can be applied in a straightforward manner to \textit{any} maximal and finitely irreducible CGDMS in $\R^n$, $n \geq 2$, although presently, and due to computer power limitations, is less precise than the method described in the previous paragraph. In this approach, instead of dealing with the finite dimensional problem of restricting the action of the Perron-Frobenius operator $F_t$ to global  polynomials, we focus our attention to the action of $F_t$ on piecewise linear approximations of the eigenfunction $\rho_t$ on some mesh domain $X^h\supseteq X$. Provided that
$h$ is small and  good estimates for the second derivatives of $\rho_t$ are available, we have accurate piecewise linear approximations of $\rho_t$ and our method yields rigorous Hausdorff dimension estimates with several digits of accuracy even for limit sets in $\R^n$ for $n >2$.

As mentioned earlier, our strategy depends on certain derivative bounds for the eigenfunctions $\rho_t$ of the Perron-Frobenius operator $F_t$. Falk and Nussbaum obtained such bounds for second order derivatives in the case of CIFSs defined via real and complex continued fraction algorithms using some very technical arguments (especially in the case of complex continued fractions). In Section \ref{sec:derbound}  (Theorem \ref{thm:derivative_bound}) we prove that the eigenfuntions $\rho_t$  admit real analytic extensions and they satisfy the desired inequalities for derivatives of all orders. More precisely if $\cS$ is a maximal and finitely irreducible CGDMS in $\R^n, n \geq 2,$ then for any multi-index $\alpha$:
\begin{enumerate}
\item \label{eq:introderbound1} There exists a computable constant $C_1(t)>0$ such that if $\cS$ consists of M\"obius maps: 
\begin{equation*}
        |D^\alpha \rho_t (x)| \leq \alpha !\,   n^{1/2}\dist (X, \partial W)^{-|\alpha|} C_1(t) \, \rho_t(x), \quad \forall x \in X.
    \end{equation*} 
\item
\label{eq:introderbound2} There exists a computable constant $C_2(t)>0$ such that if $n=2$: 
\begin{equation*}
        |D^\alpha \rho_t (x)| \leq \alpha !\,  \dist (X, \partial W)^{-|\alpha|} C_2(t) \, \rho_t(x), \quad \forall x \in X.
    \end{equation*} 
\end{enumerate}
Besides being key ingredients in our methods, we consider that these derivative bounds have  independent value and they might also find applications in other related problems. The proof of Theorem \ref{thm:derivative_bound}, which is quite short and streamlined, employs complexification and some basic tools from the theory of several complex variables. We also stress that the open set condition is not required for Theorem \ref{thm:derivative_bound}, i.e. for \eqref{eq:introderbound1} and \eqref{eq:introderbound2}.

In Sections \ref{sec: numerical} and \ref{sec:app}  we discuss a sampler of CGDMSs where our method can be applied. Due to length considerations we decided not to include an exhaustive list of applications, but we focused on examples which highlight the versatility of our method. We gather our estimates in Table \ref{tableintro}.

We pay particular attention to CIFSs which are defined by continued fraction algorithms. We rigorously estimate the Hausdorff dimension of limit sets of CIFSs defined by complex continued continued fractions, earlier considered in \cite{FalkRS_NussbaumRD_2016b}, and for the first time, we also provide estimates for the complex continued fraction system whose alphabet is the set of Gaussian primes. We also introduce a CIFS modeled on higher dimensional continued fraction algorithms and we provide the first dimension estimates for the limit set of the three-dimensional continued fraction system. To the best of our knowledge this is the first example of a genuine $3$-dimensional CIFS (meaning that the generating conformal maps are defined in $\R^3$, they are not similarities, and the limit set is not contained in any lower dimensional affine subspace of $\R^3$) where a rigorous numerical method is applied in order to estimate the Hausdorff dimension of its limit set. 

We also discuss how our method can be applied to limit sets of systems defined by quadratic perturbations of linear maps. We included this example in order to highlight the fact that our method can be also applied to systems which do not consist of M\"obius maps. All other known numerical methods for the estimation of the Hausdorff dimension of conformal fractals in $\R^n, n \geq 2,$ have focused on systems consisting of very specific M\"obius maps. We do stress that Falk and Nussbaum considered systems in $\R$ consisting of power perturbations of linear maps in \cite[Sections 3.3 and 5]{Falk}.

Since our method encompasses the general framework of CGDMSs, and not only CIFSs, we also include   examples of systems defined by Schottky group (one of the most well known families of fractals which can be viewed as limit sets of CGDMSs) and we estimate its Hausdorff dimensions.

Finally, we provide rigorous estimates for the Hausdorff dimension of the Apollonian gasket, and for the Hausdorff dimension of several limit sets of its subsystems. Although there exist several non-rigorous estimates for the Hausdorff dimension of the Apollonian gasket \cite{mcmullen,baifinch}, until this year there was only one rigorous estimate, due to Boyd \cite{boyd}. As mentioned earlier, Vytnova and Wormell \cite{Vytnova_Wormell_2025} recently obtained rigorous and very accurate (up to 128 digits) estimates for the Hausdorff dimension of the Apollonian gasket. While our method applied to the gasket yields estimates that are notably less accurate compared to those achieved by Vytnova and Wormell, it offers the advantages of ease of implementation and high flexibility. These attributes allow us to derive rigorous and effective estimates for the Hausdorff dimensions of various subsystems of the Apollonian gasket. This is crucial in our work \cite{CLUW_2025} where we identify the gasket's dimension spectrum, and we rely on rapid, rigorous and effective estimates for a broad range of its subsystems.

We summarize our numerical findings in the following table.

\begin{table}[ht]
\label{tableintro}
\caption{Hausdorff dimension estimates for various examples.}
\centering
\vspace{.1in}
\begin{tabular}{|c|c|}
\hline
{Example}           & Hausdorff dimension \\
\hline
$2D$ Continued fractions with 4 generators & $1.149576\pm 5.5e-06$ \\
\hline
$2D$ Continued fractions  & $1.853\pm 4.2e-03$  \\
\hline
$2D$ Continued fractions on Gaussian primes & $ 1.510\pm 4.0e-03$  \\
\hline
$3D$ Continued fractions with 5 generators & $1.452\pm 9.7e-03$ \\
\hline
$3D$ Continued fractions & $2.57\pm 1.7e-02$ \\
\hline
A quadratic $abc$-example & $0.631822790\pm 1.4e-08$ \\
\hline
Classical 2D Schottky group & $0.295546\pm 6.3e-06$  \\
\hline
3D Schottky group & $0.823\pm 1.8e-03$  \\
\hline
12 map Apollonian subsystem & $1.11405706\pm 9.2e-06$\\
\hline
Apollonian gasket & $1.30563\pm 2.3e-04$ \\
\hline
Apollonian gasket with 3 generators & $1.07281\pm 1.2e-04$ \\
\hline
\end{tabular}
\label{table: Hausdorff dimension estimates for varous example}
\end{table} 

Table \ref{tableintro} illustrates the generality of our method by providing several rather distinct examples, for which the Hausdorff dimensions are computed with various order of accuracy. The accuracy of the computations depends mainly on the size of the alphabet and the size of the discrete problem (see Section \ref{sec: numerical} for more details). Naturally, the largest and the most computationally intensive problem is 3D Continued fractions on an infinite lattice while a quadratic $abc$-example is the smallest. Our main objective in this paper is to elaborate that Hausdorff dimensions of a very broad family of conformal fractals are effectively computable. We did not pursue the avenue of giving the best results possible, which we plan to do in future works where we will explore the computational boundaries of our method. 

%%%%%%%%%%%%%%%%%%%%%%%%%%%%%%%%%%%%%%%%%%%%%%%%%%%%%%%%%%%%%
\section{Preliminaries}
\label{sec:prelim}
In this section we provide all the necessary background on conformal graph directed Markov systems and their thermodynamic formalism. We pay special attention to eigenfuctions of a Perron-Frobenius operator which play  crucial role in our approach.
\subsection{Conformal graph directed Markov systems.}\label{subsec:cgdms}
\begin{defn}A {\it graph directed Markov system} (GDMS) \index{GDMS}
\begin{equation}
\label{eq:gdms}
\cS= \big\{ V,E,A, t,i, \{X_v\}_{v \in V}, \{\f_e\}_{e\in E} \big\}
\end{equation}
consists of
\begin{enumerate}
\item a directed multigraph $(E,V)$ with a countable set of edges $E$, which we will call the {\em alphabet} of $\cS$, and a finite set of vertices $V$,

\item an incidence matrix $A:E \times E \ra \{0,1\}$,

\item two functions $i,t:E\ra V$ such that $t(a)=i(b)$ whenever $A_{ab}=1$,

\item a family of non-empty compact metric spaces $\{X_v\}_{v\in V}$,

%\item a number $s$, $0<s<1$, and

\item  a family of injective contractions $$ \left\{\phi_e:X_{t(e)}\to X_{i(e)}\right\}_{e\in E}$$ such that every $\phi_e,\, e\in E,$ has Lipschitz constant no larger than $s$ for some $s \in (0,1)$.
\end{enumerate}
\end{defn}
When it is clear from context we will  use the simpler notation $\cS=\{\f_e\}_{e \in E}$ for a GDMS.
We will always assume that the alphabet $E$ is not a singleton and for every $v \in V$ there exist $e,e' \in E$ such that $t(e)=v$ and $i(e')=v$. GDMSs with finite alphabets  will be called {\it finite}\index{GDMS!finite}. 

\begin{remark}
\label{ifsosc}
When $V$ is a singleton and for every $e_1,e_2 \in E$, $A_{e_1e_2}=1$ if and only if $t(e_1)=i(e_2)$, the GDMS is called an {\it iterated function system} (IFS). 
\end{remark}

%\paragraph{Symbolic Dynamics} 
We will use the following standard notation from symbolic dynamics. For every $\om \in E^*:=\bigcup_{n=0}^\infty E^n$, we denote by $|\om|$  the unique integer
$n \geq 0$ such that $\om \in E^n$, and we call $|\om|$ the {\em length} of
$\om$. We also set $E^0=\{\emptyset\}$. For $n \in \N$ and $\om \in
E^\mathbb{N}$, we let
$$
\om |_n:=\om_1\ldots \om_n\in E^n.
$$
If $\tau \in E^*$ and $\om \in E^* \cup E^\mathbb{N}$, then
 $$\tau\om:=(\tau_1,\dots,\tau_{|\tau|},\om_1,\dots).$$
For $\om,\tau\in E^{\mathbb N}$, the longest
initial block common to both $\om$ and $\tau$ will be denoted by $\omega\wedge\tau  \in
E^{\mathbb N}\cup E^*$. The \textit{shift map}
$$
\sg: E^\mathbb{N} \ra E^\mathbb{N}
$$
is given by the formula
$$
\sg\left( (\om_n)^\infty_{n=1}  \right) =  \left( (\om_{n+1})^\infty_{n=1}  \right).
$$
For a matrix $A:E \times E \to \{0,1\}$ we let
$$
E^\mathbb{N}_A
:=\{\om \in E^\mathbb{N}:  \,\, A_{\om_i\om_{i+1}}=1  \mbox{ for
  all }\,  i \in \N
\},
$$
and we call its elements {\it $A$-admissible} (infinite) words. We also set
$$
E^n_A
:=\{w \in E^n:  \,\, A_{\om_i\om_{i+1}}=1  \,\, \mbox{for
  all}\,\,  1\leq i \leq
n-1\}, \quad n \in \N,
$$
and
$$
E^*_A:=\bigcup_{n=0}^\infty E^n_A.
$$
The elements of $E_A^\ast$ are called $A$-admissible (finite) words. Slightly abusing notation, if $\om \in E^*_A$ we let $t(\om) = t(\om_{|\om|})$ and $i(\om)=i(\om_1)$.  For
every  $\om \in E^*_A$, we let
$$
[\om]:=\{\tau \in E^\mathbb{N}_A:\,\, \tau_{|_{|\om|}}=\om \}.
$$
Given $v \in V$ we denote $$E^n_A(v)=\{\om \in E^n_A: t(\om)=v\}$$ and
$$E_A^\ast(v)=\cup_{n \in \N}E^n_A(v).$$
For each $a\in E$, we let
$$E_a^\infty:=\{\om\in E_A^\N: A_{a\om_1=1}\}$$
\begin{defn}
A matrix $A:E \times E \ra \{0,1\}$ will be called {\it finitely irreducible} if there exists a finite set $\Lambda \subset E_A^*$ such that for all $i,j\in E$ there exists $\om\in \Lambda$ for which $i\om j\in E_A^*$. If the associated matrix of a GDMS is finitely irreducible, we will call the GDMS finitely irreducible as well. 
\end{defn}
We will be interested in maximal GDMSs.
\begin{defn} A GDMS $\cS$
with an incidence matrix $A$ is called {\it maximal} \index{GDMS!maximal} if it satisfies the following condition:
$$A_{ab}=1 \mbox{ if and only if }t(a)=i(b).$$
\end{defn}
This notion has an easy colloquial description --- a GDMS is maximal when one can compose maps whose range and domain coincide.

Let $\cS= \big\{ V,E,A, t,i, \{X_v\}_{v \in V}, \{\f_e\}_{e\in E} \big\}$ be a GDMS. For $\om \in E^*_A$ we define the map coded by $\om$:
\begin{equation}\label{phi-om}
\phi_\om=\phi_{\om_1}\circ\cdots\circ\phi_{\om_n}:X_{t(\om_n)}\to
X_{i(\om_1)} \qquad \mbox{if\ $\om\in E^n_A$.}
\end{equation}
For $\om \in E^{\mathbb N}_A$, the sequence of non-empty compact sets
$\{\f_{\om|_n}(X_{t(\om_n)})\}_{n=1}^\infty$ is decreasing (in the sense of inclusion)
and therefore their intersection is nonempty.
Moreover,
$$
\diam(\f_{\om|_n}(X_{t(\om_n)})) \le s^n\diam(X_{t(\om_n)})\le s^n\max\{\diam(X_v):v\in V\}
$$
for every $n\in\N$, hence
$$
\pi(\om):=\bigcap_{n\in  \N}\f_{\om|_n}(X_{t(\om_n)})
$$
is a singleton. Thus we can now define the coding map \index{coding map}
\begin{equation}
\label{picoding}
\pi:E^{\mathbb N}_A\to \du_{v\in V}X_v:=X,
\end{equation}
the latter being a disjoint union of the sets $X_v$, $v\in V$.
The set
$$
J=J_\cS:=\pi(E^{\mathbb N}_A)
$$
will be called the {\it limit set} \index{limit set} (or {\it attractor}) \index{attractor} of the GDMS $\cS$.

For  $\alpha > 0$, we define the metrics $d_\alpha$ on
$E_A^{\mathbb N}$ by setting
\begin{equation}\label{d-alpha}
d_\alpha(\om,\tau) ={\rm e}^{-\alpha|\om\wedge\tau|}.
\end{equation}
We record that all the metrics $d_{\alpha}$ induce the same topology. Moreover, see \cite[Proposition 4.2]{CTU}, the coding map $\pi:E^{\mathbb N}_A\to \du_{v\in V}X_v$ is H\"older continuous, when $E^{\mathbb N}_A$ is
equipped with any of the metrics $d_\alpha$ as in \eqref{d-alpha} and
$\du_{v\in V}X_v$ is equipped with the direct sum metric.

Let $U$ be an open and connected subset of $\R^n$. A $C^1$ diffeomorphism $\f: U \ra \R^n$ will be called \textit{conformal} if its derivative at every point of $U$ is a similarity map. We will denote the derivative of $\f$ evaluated at the point $z$ by $D \f(z) : \R^n \ra \R^n$  and we denote its operator norm by  $\|D \f (z)\|$. 
It is well known by \textit{Liouville's theorem}, see \cite[Theorem 19.2.1]{MRU}, that for
\begin{itemize}
\item  $n = 1$ the map $\f$ is conformal if and only if it is a $C^1$-diffeomorphism, 
\item $n=2$ the map $\f$ is conformal if and only if it is either holomorphic or antiholomorphic, 
\item $n \geq 3$ the map $\f$ is conformal if and only if it is a  M\"obius transformation.
\end{itemize}
We can now define conformal GDMSs. \footnote{There are several variants for a definition of GDMS, see e.g. \cite{MRU, KU}. The definition we are using is slightly more restrictive however it is the more convenient for our applications.}
\begin{defn}
\label{gdmsdef}\label{Carnot-conformal-GDMS}
A graph directed Markov system $\cS= \big\{ V,E,A, t,i, \{X_v\}_{v \in V}, \{\f_e\}_{e\in E} \big\}$ is called {\it conformal} (CGDMS) if the following conditions are satisfied.
\begin{itemize}
\item[(i)] The metric spaces $X_v, v \in V,$ are compact and connected subsets of a fixed Euclidean space $\R^n$ and $X_v=\overline{\Int(X_v)}$ for all $v\in V$.
\item[(ii)] ({\it Open Set Condition} or {\it OSC}). \index{open set condition} For all $a,b\in
E$, $a\ne b$,
$$
\phi_a(\Int(X_{t(a)})) \cap \phi_b(\Int(X_{t(b)}))= \emptyset.
$$
\item[(iii)] \label{cgdmsiii} For every vertex $v\in V$ there exist open and connected
sets $W_v \supset X_v$ such that for every $\om\in E^{\ast}$, the map
$\f_\om$ extends to a $C^1$ conformal diffeomorphism of $W_{t(\om)}$ into $W_{i(\om)}$.
\item[(iv)]({\it Bounded Distortion Property} or {\it BDP})  For each $v \in V$ there exist compact and connected sets $S_v$ such that
$X_v \subset \Int(S_v) \subset S_v \subset W_v$  so that $\phi_e (S_{t(e)}) \subset S_{i(e)}$ for all $e \in E$ and
$$\biggl|\frac{\|D\f_e(p)\|}{\|D\f_e(q)\|}-1\biggr|\le L|p-q|^\alpha
 \mbox{ for all }e\in E\mbox{ and }  p,q\in S_{t(e)},$$
where $\alpha>0$ and $L \geq 1$ are two constants depending only on $\cS, S_v$ and $W_v$.
\end{itemize}
\end{defn}
We will use the abbreviation CIFS for conformal IFS.
\begin{remark}If $n \geq 2$ the definition of a conformal GDMS can be significantly simplified. First, condition (iii) can be replaced by the following weaker condition:
\begin{enumerate}
\item[(iii)']  For every vertex $v\in V$ there exists an open connected
set $W_v \supset X_v$ such that for every $e\in E$, the map
$\f_e$ extends to a $C^1$ conformal diffeomorphism of $W_{t(e)}$ into $W_{i(e)}$.
\end{enumerate}
Moreover, Condition (iv) is superfluous since  Condition (iii)' $\Longrightarrow$ Condition (iv)(with $\alpha=1$), see e.g. \cite{Mauldin_Urbanski_2003, KU}.
\end{remark}

We record that the Bounded Distortion Property(BDP) implies that there exists some constant depending only on $\cS$ such that
\begin{equation}
\label{bdp}
K^{-1}\le\frac{\|D\f_\om(p)\|}{\|D\f_\om(q)\|}\le K
\end{equation}
for every $\om\in E_A^*$ and every pair of points $p,q\in S_{t(\om)}$. 

For $\om \in E^*_A$ we set
$$\|D \f_\om\|_\infty := \|D \f_\om\|_{X_{t(\om)}}.$$
Note that \eqref{bdp} and the Leibniz rule easily imply
that if $\om \in E_A^\ast$ and $\om=\tau \upsilon$ for some $\tau, \upsilon \in E_A^\ast$, then
\begin{equation}\label{quasi-multiplicativity1}
K^{-1} \|D\f_{\tau}\|_\infty \, \|D\f_{\upsilon}\|_\infty \le
\|D\f_\om\|_\infty \le \|D\f_{\tau}\|_\infty \,
\|D\f_{\upsilon}\|_\infty.
\end{equation}
Moreover, there exists a constant $M$, depending only on $\cS$, such that %, see e.g. \cite[Lemma 4.11 and Remark  ]{CTU}, there exists some constant $M \geq 1$ (depending only on $\cS$ and the ambient space) such that 
for every $\om \in E^*_A$, and every $p,q \in S_{t(\om)}$,
\begin{equation}
\label{boundbyderlip} 
d(\f_{\om}(p), \f_{\om}(q)) \leq  M K\|D\f_\om\|_\infty d(p,q),
\end{equation}
where $d$ is the Euclidean metric on $\R^n$. In particular for every $\om \in E^*_A$
\begin{equation}
\label{boundbyder} 
\diam(\f_\om(X_{t(\om)}))\leq   M K\|D\f_\om\|_\infty \diam(X_{t(\om)}).
\end{equation}
%where $M:=\diam X$.

\subsection{Thermodynamic formalism} 

We will now recall some well known facts from the thermodynamic formalism of GDMSs. Let $\mathcal{S}=\{\f_e\}_{e\in E}$  be a finitely irreducible  conformal GDMS. For $t\ge 0$ and $n \in \N$ let
\begin{equation}\label{zn}
Z_n(\cS,t):=Z_{n}(t) := \sum_{\om\in E^n_A} \|D\phi_\om\|^t_\infty.
\end{equation}
Note that \eqref{quasi-multiplicativity1} implies that
\begin{equation}
\label{zmn}
Z_{m+n}(t)\le Z_m(t)Z_n(t),
\end{equation}
and consequently the
sequence $\{\log Z_n(t)\}_{n=1}^\infty$ is subadditive. Therefore, the limit
$$
P_\mathcal{S}(t):=P(t):=\lim_{n \to  \infty}  \frac{ \log Z_n(t)}{n}=\inf_{n \in \N} \frac{ \log Z_n(t)}{n}
$$ 
exists and it 
is called the {\em topological pressure} of the system $\mathcal{S}$ evaluated at the parameter $t$. We also define two special parameters related to topological pressure; 
$$\theta(\cS):=\theta=\inf\{t \geq 0: P(t) <+\infty\}\quad \mbox { and }\quad h(\cS):=h=\inf\{t \geq 0: P(t)\leq 0\}.$$
The parameter $h(\cS)$ is known as \textit{Bowen's parameter}. 

It is well known that $t \mapsto P(t)$  is decreasing on $[0,+\infty)$ with $\lim _{t \ra +\infty} P(t)= -\infty$, and it is convex and continuous on $\overline{\{t \geq 0: P(t)<\infty\}}$, see e.g. \cite[19.4.6]{MRU}. Moreover
\begin{equation}
\label{thetaz}
\theta({\cS}):=\theta=\inf\{t \geq 0: P(t)<\infty\}=\inf\{t \geq 0: Z_1(t)<\infty\},
\end{equation}
and for $t \geq 0$
\begin{equation}
\label{presz1}
P(t)<+\infty \mbox{ if and only if }Z_1(t)<+\infty.
\end{equation}
The proofs of these facts can be found in \cite[Proposition 7.5]{CTU} and \cite[Lemma 3.10]{dimspec}.

Thermodynamic formalism, and topological pressure in particular, plays a fundamental role in the dimension theory of conformal dynamical systems:
\begin{thm}
\label{721}
If $\mathcal{S}$ is a finitely irreducible conformal GDMS, then
$$
h(\fs)= \dim_{\mathcal{H}}(J_\mathcal{S})
= \sup \{\dim_\cH(J_F):  \, F \subset E \, \mbox{finite} \, \}.
$$
\end{thm}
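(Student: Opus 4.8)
The plan is to prove the two equalities separately, following the standard thermodynamic formalism strategy for conformal GDMSs. The first equality $h(\fs) = \dim_{\mathcal H}(J_\fs)$ is the heart of the matter and I would prove it by establishing upper and lower bounds that sandwich the Hausdorff dimension at Bowen's parameter $h$. For the \textbf{upper bound}, I would produce an efficient cover of the limit set $J_\fs$ using the cylinder images $\f_\om(X_{t(\om)})$ for $\om \in E_A^n$. By \eqref{boundbyder} each such set has diameter at most $MK\|D\f_\om\|_\infty \diam(X_{t(\om)})$, so for any $t > h$ the $t$-dimensional Hausdorff pre-measure of this cover is controlled by $(MK)^t \big(\max_v \diam(X_v)\big)^t \, Z_n(t)$. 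Since $t > h$ forces $P(t) < 0$, subadditivity gives $Z_n(t) \le e^{n P(t) + o(n)} \to 0$ as $n \to \infty$, so $\mathcal H^t(J_\fs) = 0$ and hence $\dim_{\mathcal H}(J_\fs) \le t$; letting $t \downarrow h$ yields $\dim_{\mathcal H}(J_\fs) \le h$.

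For the \textbf{lower bound} I would take any finite subsystem $F \subset E$ and work with it, proving $\dim_{\mathcal H}(J_F) \ge h(F)$, which simultaneously sets up the second equality. The idea is to build, for a parameter $t$ slightly below $h(F)$, a conformal (Gibbs-type) measure $\mu$ on the subshift $F_A^{\mathbb N}$ and push it forward to $\nu = \mu \circ \pi^{-1}$ on $J_F$. Using finite irreducibility of the subsystem and the bounded distortion property \eqref{bdp}, one shows that $\nu$ satisfies a mass-distribution estimate of the form $\nu(B(x,r)) \lesssim r^t$ for all small $r$; here the BDP is exactly what converts the symbolic estimate $\mu([\om]) \approx \|D\f_\om\|_\infty^t$ into a genuine metric ball estimate, since cylinders of comparable diameter must be compared to Euclidean balls. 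The mass distribution principle (Frostman) then gives $\mathcal H^t(J_F) > 0$, so $\dim_{\mathcal H}(J_F) \ge t$, and letting $t \uparrow h(F)$ gives $\dim_{\mathcal H}(J_F) \ge h(F)$. Since $J_F \subset J_\fs$ is trivially monotone, this already shows $\sup_{F} \dim_{\mathcal H}(J_F) \le \dim_{\mathcal H}(J_\fs)$, and combined with the upper bound it suffices to control $h$ by the finite $h(F)$.

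To close the loop I would establish the pressure approximation $h(\fs) = \sup_{F \subset E \text{ finite}} h(F)$. The inequality $h(F) \le h(\fs)$ is immediate from $Z_n^F(t) \le Z_n^\fs(t)$, which forces $P_F(t) \le P_\fs(t)$ and hence $h(F) \le h$. For the reverse inequality, given $t < h$ so that $P_\fs(t) > 0$, I would use that $P_\fs(t) = \lim_n \frac1n \log Z_n^\fs(t)$ together with the fact that each partition sum $Z_n^\fs(t) = \sum_{\om \in E_A^n} \|D\f_\om\|_\infty^t$ is a supremum of its finite partial sums over finite alphabets $F$; since $P_\fs(t) > 0$, finitely many terms already make some $Z_n^F(t) > 1$, whence $P_F(t) > 0$ and $h(F) > t$ for a suitable finite $F$. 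Taking $t \uparrow h$ gives $\sup_F h(F) \ge h$. Combining the three displayed quantities yields the full chain of equalities.

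The step I expect to be the main obstacle is the \emph{lower bound via the conformal measure}, specifically the passage from the symbolic cylinder estimate to the Euclidean ball estimate $\nu(B(x,r)) \lesssim r^t$. This is where finite irreducibility and the BDP genuinely interact: one must compare a Euclidean ball of radius $r$ against the family of cylinder images it meets, and control how many disjoint cylinders of comparable size can accumulate near a point. The open set condition and \eqref{boundbyder} are what keep this overlap bounded, and handling the subtleties here (rather than the two softer pressure-comparison arguments) is the technical crux; for a fully rigorous treatment one would appeal to the constructions in \cite{Mauldin_Urbanski_2003}.
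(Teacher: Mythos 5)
The paper itself offers no proof of Theorem \ref{721}; it cites \cite{CTU} and \cite{Mauldin_Urbanski_2003}, and your proposal follows essentially the same standard route as those references: covering by cylinder images and the partition functions $Z_n(t)$ for the upper bound, a Gibbs/conformal measure plus the mass distribution principle for finite subsystems for the lower bound, and approximation of the pressure by finite subalphabets to tie the three quantities together. The overall architecture is sound, and the ball-versus-cylinder comparison you flag is indeed where OSC and BDP do their real work.

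There is, however, one genuine gap, and it is not where you expect it: it sits in the pressure approximation $h(\fs)=\sup_F h(F)$. You argue that if $P_\fs(t)>0$ then some finite partial sum satisfies $Z_n^F(t)>1$, ``whence $P_F(t)>0$''. This inference fails: by subadditivity $P_F(t)=\inf_m \frac1m\log Z_m^F(t)$, so knowing that the single term $\frac1n\log Z_n^F(t)$ is positive says nothing about the infimum, which can be negative or even $-\infty$ (for a poorly chosen finite $F$ the restricted incidence matrix need not be irreducible, and $Z_m^F(t)$ can collapse for larger $m$). Finite irreducibility must enter in an essential way at exactly this point: restrict to finite $F$ containing the connecting set $\Lambda$, so that any $k$-tuple of words in $F_A^n$ can be concatenated into an admissible word after inserting connectors from $\Lambda$. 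Together with \eqref{quasi-multiplicativity1} this yields a lower quasi-multiplicativity of the form $Z^F_{k(n+q)}(t)\ge \bigl(c\,Z^F_n(t)\bigr)^k$ with $c>0$ depending on $\Lambda$ and the distortion constant, hence $P_F(t)\ge \frac{1}{n+q}\log\bigl(c\,Z_n^F(t)\bigr)$. Since $Z_n^\fs(t)\ge e^{nP_\fs(t)}$ grows exponentially when $P_\fs(t)>0$, choosing $n$ large and then $F$ large enough makes $c\,Z_n^F(t)>1$ and hence $P_F(t)>0$. The same restriction to $F\supset\Lambda$ also repairs the secondary issue that your lower bound $\dim_\cH(J_F)\ge h(F)$ requires the finite subsystem to be irreducible, which an arbitrary finite $F\subset E$ need not be; this costs nothing, since the supremum over finite alphabets is unchanged by requiring $F\supset\Lambda$.
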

For the proof see \cite[Theorem 7.19]{CTU} or \cite[Theorem 4.2.13]{Mauldin_Urbanski_2003}. 

We close this section with a discussion regarding conformal measures and Perron-Frobenius operators. If $\cS=\{\f_e\}_{e \in E}$ is a finitely irreducible conformal GDMS we define
$$\Fin(\cS):= \{t> 0: Z_1(t)<+\infty\}=\left\{t>0:\sum_{e\in E}||D\phi_e||_\infty^t<+\infty\right\}.$$
Gibbs measures are of crucial importance in thermodynamic formalism of countable alphabet symbolic dynamics.
\begin{defn}
\label{gibbsdef} Let $\cS$ be a finitely irreducible conformal GDMS and let $t \in \Fin (\cS)$. A Borel probability measure $\mu$ on $E_A^\N$ is called $t$-Gibbs state for $\cS$ (or a Gibbs state for the potential $\om \to t \log \|D \phi_{\om_1}(\pi(\sigma (\om)))\|$) if and only if there exist some constant $C_{\mu,t} \geq 1$ such that
\begin{equation}
\label{gibbseq}
C_{\mu,t}^{-1} e^{-P(t)n} \|D \phi_{\om|_n}(\pi(\sigma^n(\om)))\| \leq \mu ([\om|_n]) \leq C_{\mu,t} e^{-P(t)n} \|D \phi_{\om|_n}(\pi(\sigma^n(\om)))\|,
\end{equation}
for all $\om \in E_A^\N$ and $n \in \N$.
\end{defn}
For $t \in \Fin(\cS)$ the \textit{Perron-Frobenius operator} with respect to $\cS$ and $t$ is defined as
\begin{equation}\label{1j89}
\mathcal{L}_t g(\om)= \sum_{i:\, A_{i \om_1}=1}
g(i \om)\| D\phi_i(\pi (\om))\|^t \quad \mbox{for $g \in C_b(E^\mathbb{N}_A)$ and $\om
  \in E^\mathbb{N}_A$,}
\end{equation}
where $C_b(E^\mathbb{N}_A)$ is the Banach space of real-valued bounded continuous functions on $E_A^\N$. %endowed with the supremum norm $||\cdot||_\infty$. 
It is well known that $\mathcal{L}_t: C_b(E^\mathbb{N}_A) \ra C_b(E^\mathbb{N}_A)$. Moreover, by a straightforward inductive calculation:
\begin{equation}\label{1j89it}
\mathcal{L}^n_t g(\om)= \sum_{\tau \in E_A^n:\, A_{\tau_n \om_1}=1}
g(\tau \om)\| D\phi_\tau(\pi (\om))\|^t \quad \mbox{for $g \in C_b(E^\mathbb{N}_A)$ and $\om
  \in E^\mathbb{N}_A$.}
\end{equation}
We will also denote by $\mathcal{L}_t^*: C_b^*(E^\mathbb{N}_A)\to C_b^*(E^\mathbb{N}_A)$ the dual operator of $\mathcal{L}_t$. The proof of the following theorem can be found in \cite[Theorem 7.4]{CTU}.
\begin{thm} \label{thm-conformal-invariant}
Let $\mathcal{S}=\{\f_e\}_{e\in E}$  be a finitely irreducible conformal GDMS and let $t\in\Fin(\cS)$. 
\begin{enumerate}%[label=(\roman*)]
\item \label{eq:eigenmes}  There exists a unique eigenmeasure $\tilde m_t$ of the conjugate Perron-Frobenius operator $\mathcal{L}_t^*$ and the corresponding eigenvalue is  $e^{P(t)}$.
\item \label{gibssmest} The eigenmeasure $\tilde m_t$ is a $t$-Gibbs state.% for the potential $$\om \mapsto t \log\|D\phi_{\om_1}(\pi(\sg(\om))\|:=t \zeta (\om).$$

\item \label{gibbsmeserg} There exists a unique shift-invariant $t$-Gibbs state $\tilde\mu_t$ which is ergodic and globally equivalent to $\tilde m_t$.
\end{enumerate}
\end{thm}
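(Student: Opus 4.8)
The plan is to establish the three assertions in sequence, following the classical Ruelle--Perron--Frobenius strategy adapted to the countable alphabet. For part \eqref{eq:eigenmes} I would realize the eigenmeasure as a fixed point of the normalized dual operator on Borel probability measures on $E_A^\N$,
$$\nu \longmapsto \frac{\mathcal{L}_t^\ast \nu}{(\mathcal{L}_t^\ast \nu)(\one)},$$
and seek a fixed point via the Schauder--Tychonoff theorem. The only delicate point is compactness: since $E$ is countable, $E_A^\N$ is not compact and the set of probability measures is not weak-$\ast$ compact. Here the hypothesis $t\in\Fin(\cS)$, i.e. $\sum_{e\in E}\|D\phi_e\|_\infty^t<\infty$, is exactly what is needed, as it forces the relevant family of measures to be tight, so one may run the fixed-point argument on a weak-$\ast$ compact convex set. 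A fixed point $\tilde m_t$ then satisfies $\mathcal{L}_t^\ast \tilde m_t = \lambda\,\tilde m_t$ with $\lambda = (\mathcal{L}_t^\ast\tilde m_t)(\one)>0$. To identify $\lambda = e^{P(t)}$, I would iterate the eigenequation and pair it with $\one$: using \eqref{1j89it} one has $\mathcal{L}_t^n \one(\om)=\sum_{\tau\in E_A^n,\,A_{\tau_n\om_1}=1}\|D\phi_\tau(\pi(\om))\|^t$, and the bounded distortion estimate \eqref{bdp} shows this is comparable, uniformly in $\om$, to $Z_n(t)$. Since $\int \mathcal{L}_t^n\one\,d\tilde m_t = \lambda^n$, taking logarithms, dividing by $n$, and letting $n\to\infty$ gives $\log\lambda = \lim_n \tfrac1n\log Z_n(t)=P(t)$.

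For part \eqref{gibssmest}, the Gibbs property, I would evaluate the iterated identity $e^{P(t)n}\tilde m_t = \mathcal{L}_t^{\ast n}\tilde m_t$ on a cylinder $[\om|_n]$, which is clopen and hence its indicator is continuous. By \eqref{1j89it} the function $\mathcal{L}_t^n \one_{[\om|_n]}(\tau)$ collapses to the single surviving term $\|D\phi_{\om|_n}(\pi(\tau))\|^t$ when $A_{\om_n\tau_1}=1$ and vanishes otherwise, so
$$\tilde m_t([\om|_n]) = e^{-P(t)n}\int_{\{\tau:\,A_{\om_n\tau_1}=1\}}\|D\phi_{\om|_n}(\pi(\tau))\|^t\,d\tilde m_t(\tau).$$
Using \eqref{bdp} to replace $\pi(\tau)$ by $\pi(\sigma^n\om)$ up to a bounded multiplicative constant, and observing that finite irreducibility guarantees the admissible continuation set has positive mass bounded below, produces the two-sided bound \eqref{gibbseq}.

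The uniqueness claim in \eqref{eq:eigenmes} then follows from the Gibbs property: any eigenmeasure of $\mathcal{L}_t^\ast$ has eigenvalue $e^{P(t)}$ by the computation above, hence is Gibbs, so any two eigenmeasures are boundedly comparable on every cylinder and therefore mutually absolutely continuous; an ergodicity argument (available once part \eqref{gibbsmeserg} is in place) then forces them to coincide. For part \eqref{gibbsmeserg} I would produce the leading eigenfunction $\rho_t$ of $\mathcal{L}_t$, normalized by $\int\rho_t\,d\tilde m_t=1$, as a (Cesàro) limit of the normalized iterates $e^{-P(t)n}\mathcal{L}_t^n\one$; the equicontinuity needed for convergence is supplied by uniform Hölder estimates coming from the bounded distortion property, and these same estimates show $\rho_t$ is bounded away from $0$ and $\infty$. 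Setting $d\tilde\mu_t=\rho_t\,d\tilde m_t$ yields a probability measure globally equivalent to $\tilde m_t$, and shift-invariance is verified through the intertwining identity $\mathcal{L}_t\big((g\circ\sigma)\,\rho_t\big)=g\cdot\mathcal{L}_t\rho_t$ combined with the eigenequations for $\tilde m_t$ and $\rho_t$. Ergodicity and uniqueness of $\tilde\mu_t$ follow from the mixing consequences of finite irreducibility, which preclude two distinct ergodic invariant Gibbs states.

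I expect the main obstacle throughout to be the countability of the alphabet: compactness of the space of measures in part \eqref{eq:eigenmes}, convergence of the transfer-operator iterates defining $\rho_t$, and positivity of the Gibbs lower bound in \eqref{gibssmest} must all be recovered from the summability condition $t\in\Fin(\cS)$ and from finite irreducibility, rather than from finiteness of $E$ as in the classical compact setting.
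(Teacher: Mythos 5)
The paper does not actually prove this theorem but quotes it from \cite[Theorem 7.4]{CTU} (ultimately going back to \cite{Mauldin_Urbanski_2003}), and your outline is precisely the Ruelle--Perron--Frobenius argument used there: Schauder--Tychonoff for the normalized dual operator with tightness coming from $t\in\Fin(\cS)$, the Gibbs property read off from $\mathcal{L}_t^{\ast n}\tilde m_t=e^{nP(t)}\tilde m_t$ applied to cylinder indicators, and $d\tilde\mu_t=\rho_t\,d\tilde m_t$ with $\rho_t$ a Ces\`aro limit of $e^{-nP(t)}\mathcal{L}_t^n\one$. Your sketch is sound as a plan; the only steps you flag but do not execute --- exhibiting a weak-$\ast$ compact convex set of measures actually preserved by $\nu\mapsto\mathcal{L}_t^\ast\nu/(\mathcal{L}_t^\ast\nu)(\one)$, and the uniform positive lower bound on $\tilde m_t(\{\tau:A_{a\tau_1}=1\})$ supplied by finite irreducibility --- are exactly where the cited proofs spend their effort.
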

For all $t \in \Fin(\cS)$ we will denote
\begin{equation}\label{mutdef}
m_t := \tilde m_t\circ \pi^{-1} \  \  {\rm and  }  \  \  \  \mu_t := \tilde \mu_t\circ \pi^{-1}.
\end{equation}
Note that the measures $m_t, \mu_t$ are probability measures  supported on $J_{\cS}$. The measures $m_t$ will be called $t$-conformal and in the case when $t=h=h(\cS)$, the measure $m_h$ is simply called the \textit{conformal measure} of $\cS$.

We will conclude this section with a bound for $\L_t^n(\one)$ which will be of paramount importance in the following Sections.
\begin{prop}
\label{symbbound}
Let $\mathcal{S}=\{\f_e\}_{e\in E}$  be a finitely irreducible conformal GDMS and let $t\in\Fin(\cS)$. There exists a constant $M_t \geq 1$ such that
\begin{equation}
\label{ltn1bd}
M_t^{-1} e^{P(t)n} \leq \L_t^n (\one) (\om) \leq M_t e^{P(t)n},
\end{equation}
for all $\om \in E_A^N$ and $n \in \N$.
\end{prop}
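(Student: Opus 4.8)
The plan is to reduce the estimate on $\L_t^n(\one)$ entirely to the Gibbs property of the eigenmeasure $\tilde m_t$ produced by Theorem \ref{thm-conformal-invariant}, using the Bounded Distortion Property to pass between the pointwise derivatives appearing in \eqref{1j89it} and the sup-norms $\|D\phi_\tau\|_\infty$. First I would specialize \eqref{1j89it} to $g=\one$, obtaining $\L_t^n(\one)(\om)=\sum_{\tau\in E_A^n,\,A_{\tau_n\om_1}=1}\|D\phi_\tau(\pi(\om))\|^t$. The admissibility constraint $A_{\tau_n\om_1}=1$ forces $t(\tau_n)=i(\om_1)$, so $\pi(\om)\in X_{i(\om_1)}=X_{t(\tau)}$ lies precisely in the set on which $\|D\phi_\tau\|_\infty$ is the supremum. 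Hence \eqref{bdp} yields $K^{-t}\|D\phi_\tau\|_\infty^t\le\|D\phi_\tau(\pi(\om))\|^t\le\|D\phi_\tau\|_\infty^t$, and summing over the admissible $\tau$ gives
\[
K^{-t}\,\Sigma_n(\om)\le \L_t^n(\one)(\om)\le \Sigma_n(\om),\qquad \Sigma_n(\om):=\sum_{\substack{\tau\in E_A^n\\ A_{\tau_n\om_1}=1}}\|D\phi_\tau\|_\infty^t .
\]
Thus everything reduces to controlling the restricted partition sum $\Sigma_n(\om)$.

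Next I would use that $\tilde m_t$ is a $t$-Gibbs state (Theorem \ref{thm-conformal-invariant}\eqref{gibssmest}): combining \eqref{gibbseq} with \eqref{bdp} produces a uniform constant $B\ge1$ such that $B^{-1}e^{-P(t)n}\|D\phi_\tau\|_\infty^t\le\tilde m_t([\tau])\le B\,e^{-P(t)n}\|D\phi_\tau\|_\infty^t$ for every $\tau\in E_A^n$. The upper bound is then immediate: the cylinders $[\tau]$ are pairwise disjoint and $\tilde m_t$ is a probability measure, so $\Sigma_n(\om)\le B\,e^{P(t)n}\,\tilde m_t(E_A^\N)=B\,e^{P(t)n}$. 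For the lower bound the same comparison gives $\Sigma_n(\om)\ge B^{-1}e^{P(t)n}\,\tilde m_t(F_{n,\om_1})$, where $F_{n,j}:=\{\upsilon\in E_A^\N:A_{\upsilon_n j}=1\}$, so it remains to bound $\tilde m_t(F_{n,j})$ below uniformly in $n$ and $j$.

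This uniform lower bound is the heart of the matter. Passing to the shift-invariant Gibbs state $\tilde\mu_t$, which is globally equivalent to $\tilde m_t$ (Theorem \ref{thm-conformal-invariant}\eqref{gibbsmeserg}), and using the identity $F_{n,j}=\sg^{-(n-1)}(F_{1,j})$, invariance collapses the dependence on $n$: $\tilde\mu_t(F_{n,j})=\tilde\mu_t(F_{1,j})$. It then suffices to show $\inf_j\tilde\mu_t(F_{1,j})>0$, and this is exactly where finite irreducibility enters. Fixing one letter $i_0\in E$ and applying finite irreducibility to each pair $(i_0,j)$ yields a connecting word $\lambda_j\in\Lambda$ with $i_0\lambda_j j\in E_A^*$; the letter immediately preceding $j$ in this word is a predecessor of $j$ lying in the finite reservoir $S:=\{i_0\}\cup\{\lambda_{|\lambda|}:\lambda\in\Lambda\}$. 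Consequently $F_{1,j}\supseteq[a^*(j)]$ for some $a^*(j)\in S$, and since $S$ is finite with $\|D\phi_a\|_\infty>0$ for all $a$, the Gibbs bounds give $\tilde\mu_t(F_{1,j})\gtrsim\min_{a\in S}\|D\phi_a\|_\infty^t>0$ uniformly in $j$.

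I expect this last step --- extracting a single finite set of predecessors valid simultaneously for every $j$ --- to be the main obstacle, since a naive estimate would allow the predecessors of $j$, and hence their derivatives, to degenerate as $j$ ranges over an infinite alphabet; it is precisely finite irreducibility that rules this out. Once the uniform lower bound $\tilde m_t(F_{n,j})\ge\delta>0$ is in hand, combining it with the two displayed inequalities and taking $M_t$ to be the maximum of $B$, of $(K^{-t}B^{-1}\delta)^{-1}$, and of $1$ completes the proof.
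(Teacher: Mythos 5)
Your proof is correct, and for the lower bound it takes a genuinely different route from the paper. Both arguments reduce, via the Bounded Distortion Property, to controlling the restricted partition sum over $\{\tau\in E_A^n: A_{\tau_n\om_1}=1\}$, and both exploit shift-invariance of the Gibbs state $\tilde\mu_t$. But the paper applies invariance to the single cylinder $[a]$ with $a=\om_1$, writing $\mu_t([a])=\mu_t(\sg^{-n}([a]))=\sum_{\tau}\mu_t([\tau a])$ and then using the Gibbs upper bound on each $\mu_t([\tau a])$; the point is that the $a$-dependent factor $\sup\{\|D\f_a\|^t\}$ appears on both sides and cancels exactly, so no uniform-in-$a$ estimate is ever needed and finite irreducibility enters only through the existence of $\mu_t$. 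You instead apply invariance to the set $F_{n,j}=\sg^{-(n-1)}(F_{1,j})$ and must then prove $\inf_j\tilde\mu_t(F_{1,j})>0$, which you do by extracting a finite reservoir of predecessors from the set $\Lambda$ witnessing finite irreducibility --- this is a correct and rather transparent use of the hypothesis, at the cost of an extra step the paper's cancellation trick avoids. (You also prove the upper bound directly from disjointness of cylinders and $\tilde m_t(E_A^\N)=1$, where the paper simply cites a lemma.) Two small points to tidy: since $\tilde\mu_t$ is itself a $t$-Gibbs state, you can run the whole argument with $\tilde\mu_t$ in place of $\tilde m_t$ and avoid having to quantify ``globally equivalent''; alternatively, note that any two $t$-Gibbs states are comparable on cylinders with uniform constants, hence on $F_{n,j}$, which is a disjoint union of length-$n$ cylinders. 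And since $\Lambda\subset E_A^*$ may contain the empty word, your inclusion of $i_0$ in the reservoir $S$ is indeed necessary --- good that you caught it.
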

\begin{proof} The upper bound follows from \cite[Lemma 18.1.1]{MRU}. We will now present the proof for the lower bound. We remark that a much more general statement, which establishes lower bounds for Perron-Frobenius operators with respect to general potentials, will appear in the forthcoming book \cite{DTMZbook}.

 We will first show that for all $a \in E$ and $n \in \N$: 
\begin{equation}
\label{lowbddkey}
\sum_{\om \in E_A^n:A_{\om_n a}=1} \sup \{ \|D \phi_\om (\pi (\tau))\|: \tau \in [a]\} \geq C_t^{-2}e^{P(t)n}.
\end{equation}
By Theorem \ref{thm-conformal-invariant} \eqref{gibbsmeserg} and Definition \ref{gibbsdef} we know that there exists some $C_t>0$ such that
\begin{equation}
\label{gibbsmut}C_{t}^{-1} e^{-P(t)n} \|D \phi_{\om|_n}(\pi(\sigma^n(\om)))\| \leq \mu_t ([\om|_n]) \leq C_{t} e^{-P(t)n} \|D \phi_{\om|_n}(\pi(\sigma^n(\om)))\|,
\end{equation}
for all $\om \in E_A^\N$ and $n \in \N$. Note that \eqref{gibbsmut} and the chain rule imply that
\begin{equation}
\begin{split}
\label{mutab}
\mu_t ([\alpha \beta]) \leq C_t &\exp (-(|\alpha|+|\beta|)P(t))\, \sup \{\|D \phi_a (\pi(\tau))\|: \tau \in [\beta]\}  \\
&\quad \quad\cdot\sup \{\|D \phi_\beta (\pi (\rho))\|: \beta \rho \in E_A^\N\},
\end{split}
\end{equation}
for any $\alpha, \beta \in E_A^\ast$ such that $\alpha\beta \in E_A^\ast$. 

Let $a \in E$. We then see that:
\begin{equation*}
\begin{split}
\label{mta2}
C_t^{-1}& e^{-P(t)} \sup \{\|D \phi_a (\pi (\tau)) \|: a \tau \in E_A^\N\}  \overset{\eqref{gibbsmut}}{\leq} \mu_t([a])= \mu_t(\sigma^{-n} ([a])) =\sum_{\om \in E_A^n: A_{\om_n a}=1} \mu_t([\om a]) \\&\overset{\eqref{mutab}}{\leq} C_t e^{-(n+1)P(t)} \sup \{\|D \phi_a (\pi (\tau))\|:a \tau \in E_A^\N\}\, \sum_{\om \in E_A^n: A_{\om_n a}=1} \sup \{\|D \phi_\om (\pi (\tau))\|: \tau \in [a]\}.
\end{split}
\end{equation*}
Thus \eqref{lowbddkey} follows.

We can now prove the lower bound in \eqref{ltn1bd}. Let $\tau \in E_A^\N$ and $n \in \N$. If $\om \in E_A^n$ and $\om \tau \in E_A^\N$ then by the bounded distrotion property:
\begin{equation}
\label{bddlowebd}
\|D \phi_\om (\pi (\tau))\| \overset{\eqref{bdp}}{\geq} K^{-1} \sup \{\|D\phi_\om (\pi (\rho))\|: \rho \in [\tau_1]\}.
\end{equation}
Therefore,
\begin{equation*}
\begin{split}
\L_t^n(\one) (\tau)&=\sum_{ \om \in E_A^n:\om \tau \in E_A^\N} \|D \phi_\om (\pi (\tau))\|^t \\
&\overset{\eqref{bddlowebd}}{ \geq} K^{-t} \sum_{\om \in E_A^n:\om \tau \in E_A^\N } \sup \{\|D\phi_\om (\pi (\rho)) \|: \rho \in [\tau_1]\} \overset{\eqref{lowbddkey}}{\geq} K^{-t} C_t^{-2} e^{P(t)n}.
\end{split}
\end{equation*}
The proof is complete.
\end{proof}

%%%%%%%%%%%%%%%%%%%%%%%%%%%%%%%%%%%%%%%%%%%%%%%%%%%%%%%%%%%%%
\subsection{\texorpdfstring{The Radon-Nikodym derivative $\rho_t=\frac{d \mu_t}{ d m_t}$ for maximal CGDMS}{}}
% Need textorpdfstring to not through errors
\label{sec:rn}
%In this section we generalize the treatment from \cite[Section 6.1]{Mauldin_Urbanski_2003}, which only dealt with CIFSs, to maximal CGDMSs. We also show how the action of the Perron-Frobenius operator, which will be introduced shortly, can be extended in a neighborhood of $S$.  

Most of the results in this section are essentially known; see e.g \cite[Section 6.1]{Mauldin_Urbanski_2003} for the case of CIFS or \cite{nusspriverduyn, verma} for related treatments in the case of complete metric spaces. For this reason, we skip most of the proofs. The interested reader can find detailed proofs of the statements as they appear in this section in \cite{Erik_thesis}.

In what follows 
$$
\cS= \big\{ V,E,A, t,i, \{X_v\}_{v \in V}, \{\f_e\}_{e\in E} \big\}
$$
will denote a maximal CGDMS, which does not have to satisfy the open set condition. We will assume that the sets $X_v$ are disjoint. This is not an essential restriction because, as it was described in \cite[Remark 4.20]{CTU}, given any GDMS we can use formal lifts to obtain a new GDMS with essentially the same limit set but whose corresponding compact sets are disjoint.

We start by introducing another Perron-Frobenius operator on $C(X)$; recall from Section \ref{subsec:cgdms} that $X= \du_{v \in V} X_v$. For $t \in \Fin(\fs)$, $g \in C(X)$, let
\begin{equation}\label{ft}
 F_t(g)(x) = \sum_{e \in E_A} \norm{D\phi_e(x)}^t g(\phi_e(x)) \chi_{X_{t(e)}}(x) .
\end{equation}
It is not difficult to show  that $F_t :(C(X), \|\cdot\|_\infty) \ra (C(X), \|\cdot\|_\infty)$ is a bounded linear operator. Moreover, using that $\cS$ is maximal, one can easily derive a formula for the $n$-th iterate of the operator $F_t$:
\begin{equation}\label{perronit}
F_t^{n}(g)(x) = \sum_{w \in E_A^n} \norm{D \phi_\om(x)}^t  g(\phi_\om(x)) \chi_{X_{t(w)}}(x).
\end{equation}
\begin{remark}We note that the main reason why we restrict ourselves to maximal systems is that iterates are not  well defined if the GDMS is not maximal.
\end{remark}

The connection between the Perron-Frobenius operator $F_t$ and the symbolic Perron-Frobenius  operator $\mathcal{L}_t$ defined in \eqref{1j89} can be easily obtained. For every $g \in C(X)$ and $n \in \N$:
\begin{equation}
\label{ltnftn}
\L_t^n (\one) (g \circ \pi)=F_t^n (g) \circ \pi.
\end{equation}
See \cite[p. 425]{KU} for the straightforward calculation leading to \eqref{ltnftn}.

Using Proposition \ref{symbbound} we can show that iterates $F_t^{(n)}(\one)$ are uniformly bounded above and below with bounds depending on $t$ and $n$.
\begin{prop}\label{transunifbd} 
Let $\fs$ be a finitely irreducible, maximal CGDMS. If $t \in \Fin(\fs)$ then for all $x \in X$ and $n \in \N$:
\begin{equation}\label{eq:ftnunibd}
M_t^{-1}K^{-t}e^{nP(t)} \leq F_t^{(n)}(\one)(x) \leq M_t K^{t}e^{nP(t)},
\end{equation}
where $M_t$ is as in Proposition \ref{symbbound}. 
\end{prop}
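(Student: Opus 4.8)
The plan is to reduce the statement to the symbolic estimate of Proposition \ref{symbbound} via the intertwining relation \eqref{ltnftn}, and then to remove the restriction to the limit set by invoking the bounded distortion property \eqref{bdp}. First I would apply \eqref{ltnftn} with $g=\one$; since $\one\circ\pi=\one$ this gives $\mathcal{L}_t^n(\one)=F_t^n(\one)\circ\pi$, so that $F_t^n(\one)(\pi(\om))=\mathcal{L}_t^n(\one)(\om)$ for every $\om\in E_A^\N$. Proposition \ref{symbbound} then already delivers the desired two-sided bound at every point of the limit set $J=\pi(E_A^\N)$, in fact \emph{without} the factors $K^{\pm t}$. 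The only real work is to upgrade this from $J$ to an arbitrary $x\in X$.

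To that end, fix $x\in X$ and let $v\in V$ be the unique vertex with $x\in X_v$. By \eqref{perronit}, and since $\chi_{X_{t(\om)}}(x)=1$ exactly when $t(\om)=v$, I would write
\begin{equation*}
F_t^n(\one)(x)=\sum_{\om\in E_A^n:\, t(\om)=v}\|D\phi_\om(x)\|^t .
\end{equation*}
Next I would produce a reference point in the same fiber lying in $J$: using the standing nondegeneracy assumption (every vertex is the initial vertex of some edge) together with maximality, extend an edge $e'$ with $i(e')=v$ to an infinite admissible word $\rho\in E_A^\N$ with $\rho_1=e'$, so that $\pi(\rho)\in X_{i(\rho_1)}=X_v$. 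Because the system is maximal, the same indicator bookkeeping shows
\begin{equation*}
\mathcal{L}_t^n(\one)(\rho)=F_t^n(\one)(\pi(\rho))=\sum_{\om\in E_A^n:\, t(\om)=v}\|D\phi_\om(\pi(\rho))\|^t ,
\end{equation*}
a sum over the \emph{same} index set. Since both $x$ and $\pi(\rho)$ lie in $X_v=X_{t(\om)}\subset S_{t(\om)}$ for every $\om$ occurring, \eqref{bdp} applies term by term and, after raising to the power $t>0$, gives
\begin{equation*}
K^{-t}\|D\phi_\om(\pi(\rho))\|^t\le\|D\phi_\om(x)\|^t\le K^{t}\|D\phi_\om(\pi(\rho))\|^t .
\end{equation*}
Summing over $\om$ yields $K^{-t}\mathcal{L}_t^n(\one)(\rho)\le F_t^n(\one)(x)\le K^{t}\mathcal{L}_t^n(\one)(\rho)$, and inserting $M_t^{-1}e^{nP(t)}\le\mathcal{L}_t^n(\one)(\rho)\le M_t e^{nP(t)}$ from Proposition \ref{symbbound} produces exactly the claimed bounds $M_t^{-1}K^{-t}e^{nP(t)}\le F_t^n(\one)(x)\le M_t K^{t}e^{nP(t)}$.

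The step I expect to require the most care — though it is modest rather than deep — is precisely the passage off the limit set: one must confirm that a reference point $\pi(\rho)\in X_v$ exists and, more importantly, that for such $\rho$ the symbolic sum $\mathcal{L}_t^n(\one)(\rho)$ ranges over exactly the set $\{\om\in E_A^n:t(\om)=v\}$ controlling $F_t^n(\one)(x)$, so that the distortion comparison is genuinely term by term and contributes precisely the factor $K^{\pm t}$. Both facts hinge on maximality, which identifies admissibility $A_{\om_n\rho_1}=1$ with the vertex matching $t(\om)=i(\rho_1)=v$, so I would make the role of maximality explicit at this point; everything else is a direct substitution of the symbolic estimate.
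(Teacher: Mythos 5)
Your argument is correct and is essentially the paper's own proof: fix $x\in X_v$, choose $\tau\in E_A^\N$ with $i(\tau_1)=v$, use maximality to identify the index set $\{\om\in E_A^n: t(\om)=v\}$ with $\{\om\in E_A^n: A_{\om_n\tau_1}=1\}$, compare $\|D\phi_\om(x)\|^t$ with $\|D\phi_\om(\pi(\tau))\|^t$ term by term via \eqref{bdp}, and conclude with Proposition \ref{symbbound}. The only cosmetic difference is your preliminary remark via \eqref{ltnftn} that the bound holds on $J$ without the $K^{\pm t}$ factors, which the paper omits as it is not needed.
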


\begin{proof}
 Let $x \in X$.Then $x \in X_v$ for some $v \in V$. Let $\tau \in E_A^\N$ such that $i(\tau)=i(\tau_1)=v$. Then, by Proposition \ref{symbbound}
 \begin{equation*}
 \begin{split}
 %\label{transunifbd-1}
 F_t^n(\one)(x)&=\sum_{\omega \in E_A^n} \norm{D \phi_\omega(x)}^t \chi_{X_{t(\omega)}(x)}=\sum_{\omega \in E_A^n:t(\om_n)=v} \norm{D \phi_\omega(x)}^t \\
 &\overset{\eqref{bdp}}{\leq}K ^{t}\sum_{\omega \in E_A^n:A_{\om_n \tau_1}=1} \norm{D \phi_\omega(\pi(\tau))}^t=K^t \L^n_t(\one) (\tau) \overset{\eqref{ltn1bd}}{\leq} K^t M_t e^{n P(t)}.
 \end{split}
 \end{equation*}
 The lower bound follows by a similar argument.
\end{proof} 
In order to simplify notation we will also use the following normalized version of $F_t$. For $t \in \Fin(\fs)$ we let
\[ \tilde{F}_t(g)(x) := \lambda_t^{-1} F_t(g)(x) = \lambda_t^{-1}\sum_{e \in E_A} \norm{D\phi_e(x)}^t g(\phi_e(x)) \chi_{X_{t(e)}}(x),\]
where $\lambda_t = e^{P(t)}$ is the spectral radius of $F_t$. Recalling \eqref{perronit}, we obtain a formula for $\tilde{F}_t^n$ given by
\[ \tilde{F}_t^{n}(g)(x) := \lambda_t^{-n} F_t^{(n)}(g)(x) =\lambda_t^{-n}\sum_{w \in E_A^n} \norm{D \phi_\om(x))}^t  g(\phi_\om(x)) \chi_{X_{t(w)}}(x).\]
We now recall the definition of almost periodicity.
\begin{defn}[Almost Periodicity]
Suppose that $L$ is a bounded operator on a Banach space $B$, with $L:B \rightarrow B$. Then $L$ is called \textit{almost periodic} if, for every $x \in B$, the orbit $\left( L^n(x)\right)_{n=0}^\infty $ is relatively compact in $B$.
\end{defn}
Arguing as in \cite[Lemma 6.1.1]{Mauldin_Urbanski_2003}, we obtain that:
\begin{prop}[$\tilde{F}_t$ is Almost-Periodic]
\label{prop:ae} Let $\fs$ be a finitely irreducible, maximal CGDMS and let $t \in \Fin(\fs)$. The operator
$ \tilde{F}_t :C(X) \ra C(X)$ is almost periodic.
\end{prop}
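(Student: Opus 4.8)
The plan is to establish relative compactness of each orbit $\bigl(\tilde F_t^n(g)\bigr)_{n\ge0}$ in $(C(X),\|\cdot\|_\infty)$ by the Arzelà--Ascoli theorem, so it suffices to check that the family $\{\tilde F_t^n(g)\}_{n\in\N}$ is uniformly bounded and equicontinuous. Uniform boundedness is immediate from Proposition \ref{transunifbd}: since $|g|\le\|g\|_\infty\one$ and $F_t$ is a positive operator, $|\tilde F_t^n(g)(x)|\le\|g\|_\infty\,\lambda_t^{-n}F_t^{(n)}(\one)(x)\le M_tK^t\|g\|_\infty$ for every $x\in X$ and $n\in\N$.

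The real content is equicontinuity, uniform in $n$. First I would record that, because $V$ is finite and the $X_v$ are disjoint and compact, there is $\rho_0>0$ such that $|x-y|<\rho_0$ forces $x,y$ to lie in a common piece $X_v$; for such pairs every indicator $\chi_{X_{t(\om)}}$ agrees at $x$ and $y$, so only words with $t(\om)=v$ contribute and the characteristic functions may be dropped. The key estimate is a bounded distortion bound for compositions that is \emph{uniform in the word length}. Using that conformality makes the operator norm multiplicative under the chain rule, one telescopes
\begin{equation*}
\log\frac{\|D\phi_\om(x)\|}{\|D\phi_\om(y)\|}=\sum_{j=1}^{n}\log\frac{\|D\phi_{\om_j}(x_j)\|}{\|D\phi_{\om_j}(y_j)\|},
\end{equation*}
where $x_j,y_j$ are the images of $x,y$ under the tail map $\phi_{\om_{j+1}\cdots\om_n}$, a contraction of ratio at most $s^{\,n-j}$. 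Applying condition (iv) of Definition \ref{gdmsdef} together with $|\log(1+u)|\le C_K|u|$ on the range guaranteed by \eqref{bdp}, and summing the resulting geometric series, gives
\begin{equation*}
\left|\log\frac{\|D\phi_\om(x)\|}{\|D\phi_\om(y)\|}\right|\le C_KL\sum_{j=1}^{n}s^{(n-j)\alpha}\,|x-y|^\alpha\le\frac{C_KL}{1-s^\alpha}\,|x-y|^\alpha,
\end{equation*}
a bound independent of both $\om$ and $n$; raising to the power $t$ produces a constant $C_t'$ with $\bigl|(\|D\phi_\om(x)\|/\|D\phi_\om(y)\|)^t-1\bigr|\le C_t'|x-y|^\alpha$.

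With this uniform distortion estimate in hand I would split each summand of $\tilde F_t^n(g)(x)-\tilde F_t^n(g)(y)$ as
\begin{equation*}
\|D\phi_\om(x)\|^t\bigl[g(\phi_\om(x))-g(\phi_\om(y))\bigr]+g(\phi_\om(y))\bigl[\|D\phi_\om(x)\|^t-\|D\phi_\om(y)\|^t\bigr].
\end{equation*}
For the first group, the contraction property gives $|\phi_\om(x)-\phi_\om(y)|\le s^n|x-y|\le|x-y|$, so the uniform continuity of $g$ on the compact set $X$ controls $|g(\phi_\om(x))-g(\phi_\om(y))|$ by a modulus of continuity evaluated at $|x-y|$; summing against $\lambda_t^{-n}\|D\phi_\om(x)\|^t$ and invoking Proposition \ref{transunifbd} bounds this contribution by $M_tK^t$ times that modulus. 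For the second group, $|g(\phi_\om(y))|\le\|g\|_\infty$ and the distortion estimate give the bound $\|g\|_\infty C_t'|x-y|^\alpha\,\tilde F_t^n(\one)(y)\le\|g\|_\infty C_t'M_tK^t|x-y|^\alpha$. Both bounds tend to $0$ as $|x-y|\to0$ uniformly in $n$, which is exactly equicontinuity, and Arzelà--Ascoli completes the argument.

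I expect the uniform distortion estimate to be the main obstacle: its whole point is that the constant $C_KL/(1-s^\alpha)$ does not grow with $n$, and it is precisely this $n$-independence---obtained by telescoping along the composition and exploiting the geometric contraction of the tails---that upgrades the pointwise contraction of each $\phi_\om$ into equicontinuity of the entire orbit rather than of a single iterate.
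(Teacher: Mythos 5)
Your proposal is correct and follows essentially the same route as the paper, which simply invokes the argument of \cite[Lemma 6.1.1]{Mauldin_Urbanski_2003}: uniform boundedness of the orbit via Proposition \ref{transunifbd}, equicontinuity via a word-length-independent distortion estimate obtained by telescoping along the composition, and then Arzel\`a--Ascoli. Your explicit derivation of the H\"older-type distortion bound from condition (iv) of Definition \ref{gdmsdef} and the geometric contraction of the tails, together with the observation that the finitely many disjoint compact pieces $X_v$ let you discard the indicator functions at small scales, supplies precisely the details the cited lemma contains.
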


Using Propositions \ref{transunifbd} and \ref{prop:ae} and arguing as in \cite[Theorem 6.1.2]{Mauldin_Urbanski_2003}
we obtain the following theorem. %For what follows it is useful to recall Theorem \ref{thm-conformal-invariant} and \eqref{mutdef}.
\begin{thm}
\label{thm:Ftrhot}  
Let $\fs$ be a finitely irreducible, maximal CGDMS and let $t \in \Fin(\fs)$. There exists a unique continuous function $\rho_t: X \rightarrow [0,\infty)$ so that:
\begin{enumerate}
\item \label{eq:thm1exuni} 
$\tilde{F}_t \rho_t = \rho_t$,
and $\int \rho_t dm_t = 1$,
\item \label{eq:thm1bound} $K^{-t} M_t^{-1} \leq \rho_t \leq K^{t}M_t$,
\item \label{eq:thm1conv}  $\{\tilde{F}_t^n(\one) \}_{n=1}^\infty$ converges uniformly to $\rho_t$ on $X$,
\item \label{eq:rnrn} $\rho_t|_{J_\fs} = \frac{d \mu_t}{d m_t}$.
\end{enumerate}
\end{thm}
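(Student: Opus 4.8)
The plan is to produce $\rho_t$ as a limit of Cesàro averages and then verify the four properties in the order (1)--(2), uniqueness, (3), (4). First I would set $A_N:=\frac1N\sum_{n=0}^{N-1}\tilde F_t^{\,n}(\one)$. Rewriting Proposition~\ref{transunifbd} with $\lambda_t=e^{P(t)}$, each $\tilde F_t^{\,n}(\one)$, and hence each $A_N$, lies in the order interval $[K^{-t}M_t^{-1},K^tM_t]$. Almost periodicity (Proposition~\ref{prop:ae}) makes the orbit of $\one$ relatively compact, so a subsequence $A_{N_k}$ converges uniformly to some $\rho_t$. From the telescoping identity $\tilde F_t A_N-A_N=\frac1N\big(\tilde F_t^{\,N}(\one)-\one\big)$, whose right side is $O(1/N)$ in sup norm, I get $\tilde F_t\rho_t=\rho_t$ in the limit; passing to the limit in the order bounds gives item (2), so in particular $\rho_t\ge K^{-t}M_t^{-1}>0$. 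For the normalization I would first record that $m_t$ is $\tilde F_t$-invariant as a functional: since $\tilde m_t$ is the eigenmeasure of $\L_t^*$ with eigenvalue $e^{P(t)}$ (Theorem~\ref{thm-conformal-invariant}) and $\L_t(g\circ\pi)=F_t(g)\circ\pi$ by \eqref{ltnftn}, pushing forward yields $\int\tilde F_t(g)\,dm_t=\int g\,dm_t$ for all $g\in C(X)$. Applying this to $A_N$ (each of unit integral) and letting $N\to\infty$ gives $\int\rho_t\,dm_t=1$, completing item (1).

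\emph{Uniqueness.} Suppose $\tilde F_t\rho=\rho$ with $\rho\in C(X)$. Put $\beta=\max_X\rho/\rho_t$ and $h=\beta\rho_t-\rho\ge0$; then $\tilde F_t h=h$ and $h(x_0)=0$ at a maximizer $x_0\in X_v$. Since every summand of $\tilde F_t^{\,n}h(x_0)=\lambda_t^{-n}\sum_{\om\in E_A^n(v)}\|D\phi_\om(x_0)\|^t h(\phi_\om(x_0))$ is nonnegative, $h$ vanishes on the entire forward orbit $\{\phi_\om(x_0):\om\in E_A^*(v)\}$; finite irreducibility makes this orbit accumulate on all of $J_\fs$, so $h|_{J_\fs}=0$. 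Finally, iterating $h=\tilde F_t^{\,n}h$ and using $\diam\phi_\om(X_{t(\om)})=O(s^n)$ together with the a priori bound $\tilde F_t^{\,n}(\one)\le K^tM_t$, the continuity of $h$ and $h|_{J_\fs}=0$ force $|h(x)|\le K^tM_t\cdot\sup_{\dist(y,J_\fs)=O(s^n)}|h(y)|\to0$, i.e. $h\equiv0$. Hence $\rho=\beta\rho_t$, and the normalization pins $\beta=1$.

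\emph{Full convergence (the main obstacle).} The delicate point is upgrading convergence of the Cesàro averages to genuine uniform convergence of $\tilde F_t^{\,n}(\one)$. Monotonicity helps but does not close the argument: writing $\alpha_n=\min_X\tilde F_t^{\,n}(\one)/\rho_t$ and $\beta_n=\max_X\tilde F_t^{\,n}(\one)/\rho_t$, positivity of $\tilde F_t$ and $\tilde F_t\rho_t=\rho_t$ make $\alpha_n\nearrow$ and $\beta_n\searrow$ with $\alpha_n\le1\le\beta_n$, but showing $\alpha_n,\beta_n\to1$ amounts to proving that $1$ is the only peripheral eigenvalue of $\tilde F_t$ and that it is simple. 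This is where I expect the real work to be. I would resolve it with the almost-periodic (Jacobs--de Leeuw--Glicksberg) structure of $\tilde F_t$ on the compact orbit closure: any unimodular eigenfunction $\tilde F_t u=e^{i\theta}u$ satisfies $|u|=|\tilde F_t u|\le\tilde F_t|u|$ with $\int(\tilde F_t|u|-|u|)\,dm_t=0$, so $\tilde F_t|u|=|u|$ on $\mathrm{supp}\,m_t=J_\fs$; equality in $|\tilde F_t u|\le\tilde F_t|u|$ forces the phases of the summands to align along forward orbits, and finite irreducibility then forces $\theta=0$. With $1$ the only peripheral eigenvalue and $\rho_t$ its simple eigenfunction, $\tilde F_t^{\,n}(\one)$ converges; every subsequential limit is a fixed point of unit $m_t$-integral, hence equals $\rho_t$ by uniqueness, giving item (3). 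This mirrors the scheme of \cite[Theorem 6.1.2]{Mauldin_Urbanski_2003}.

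\emph{The Radon--Nikodym identity.} For item (4) I would transport the known symbolic statement through $\pi$. On $E_A^\N$ the normalized operator $\lambda_t^{-1}\L_t$ has a unique continuous fixed point $\tilde\rho_t$ with $\lambda_t^{-n}\L_t^n(\one)\to\tilde\rho_t$ uniformly, and the symbolic theory behind Theorem~\ref{thm-conformal-invariant} identifies $\tilde\rho_t=d\tilde\mu_t/d\tilde m_t$. Dividing \eqref{ltnftn} by $\lambda_t^{\,n}$ gives $\lambda_t^{-n}\L_t^n(\one)=\tilde F_t^{\,n}(\one)\circ\pi$, so letting $n\to\infty$ and invoking item (3) yields $\tilde\rho_t=\rho_t\circ\pi$. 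Pushing forward under $\pi$, for every Borel $B\subset J_\fs$ one gets $\mu_t(B)=\tilde\mu_t(\pi^{-1}B)=\int_{\pi^{-1}B}\rho_t\circ\pi\,d\tilde m_t=\int_B\rho_t\,dm_t$, which is precisely $\rho_t|_{J_\fs}=d\mu_t/dm_t$.
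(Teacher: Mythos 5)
This is essentially the paper's own argument: the paper establishes the theorem by combining the uniform two-sided bounds of Proposition~\ref{transunifbd} with the almost periodicity of Proposition~\ref{prop:ae} and ``arguing as in \cite[Theorem 6.1.2]{Mauldin_Urbanski_2003}'', which is exactly the Jacobs--de Leeuw--Glicksberg/peripheral-spectrum scheme you carry out for the uniform convergence of $\tilde F_t^{\,n}(\one)$, followed by transporting the symbolic Radon--Nikodym identity through $\pi$ via \eqref{ltnftn}. Your Ces\`aro-average construction of the fixed point and the separate maximum-principle uniqueness step are minor reorganizations of that same argument, so the two proofs coincide in substance.
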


We will also need extensions of the eigenfunctions $\rho_t$ on neighborhoods of $X$. They will be used in Section \ref{sec:derbound} in order to show that the functions $\rho_t$ admit real analytic extensions on $S$, and for technical reasons they will also be useful in the implementation of our method in Section \ref{sec: numerical}. First we need to define an extension of the Perron-Frobenius operator in $S:=\cup_{v \in V} S_v$. We assume that the sets $S_v$ are disjoint. For $t \in \Fin (\cS)$ and $g \in C(S)$, we let
\begin{equation}\label{gt}
 G_t(g)(x) = \sum_{e \in E} \norm{D\phi_e(x)}^t g(\phi_e(x)) \chi_{S_{t(e)}}(x) .
\end{equation}

Similarly to $F_t$, $G_t: (C(S),\|\cdot\|_\infty) \to (C(S),\|\cdot\|_\infty)$ is a bounded linear operator. We can also obtain an analogue of Proposition \ref{transunifbd}.
\begin{prop}\label{transunifbd2}
Let $\fs$ be a finitely irreducible, maximal CGDMS and let $t \in \Fin(\fs)$. Then for all $x \in S$ and $n \in \N$:
\begin{equation}
\label{eq:ftnunibdex}M_t^{-1}K^{-2t}e^{nP(t)} \leq G_t^{(n)}(\one)(x) \leq K^{2t}e^{nP(t)}M_t^{-1}.
\end{equation}
\end{prop}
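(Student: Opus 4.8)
The plan is to follow the proof of Proposition~\ref{transunifbd} almost verbatim, with the interior sets $X_v$ replaced by the enlarged sets $S_v$, at the cost of one additional application of the bounded distortion property. First I would record the analogue of the iterate formula \eqref{perronit} for $G_t$. Since $\cS$ is maximal and condition (iv) of Definition~\ref{gdmsdef} guarantees $\phi_e(S_{t(e)}) \subset S_{i(e)}$, each composition $\phi_\om$ is well defined on $S_{t(\om)} \subset W_{t(\om)}$ and maps it into $S$; an induction identical to the one producing \eqref{perronit} then gives
\begin{equation*}
G_t^{(n)}(g)(x) = \sum_{\om \in E_A^n} \norm{D\phi_\om(x)}^t\, g(\phi_\om(x))\, \chi_{S_{t(\om)}}(x).
\end{equation*}
Because the sets $S_v$ are assumed pairwise disjoint, a fixed $x \in S$ lies in a unique $S_v$, whence $\chi_{S_{t(\om)}}(x) = 1$ precisely when $t(\om)=v$, and therefore
\begin{equation*}
G_t^{(n)}(\one)(x) = \sum_{\om \in E_A^n:\, t(\om)=v} \norm{D\phi_\om(x)}^t.
\end{equation*}

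Next I would compare this sum term by term with the same sum evaluated at an interior reference point. Fix any $y \in X_v$ (which is nonempty and satisfies $X_v \subset S_v$). For every admissible $\om$ with $t(\om)=v$, both $x$ and $y$ lie in the common set $S_v = S_{t(\om)}$, so the bounded distortion estimate \eqref{bdp} applies to the pair $x,y$ and yields $K^{-t}\norm{D\phi_\om(y)}^t \le \norm{D\phi_\om(x)}^t \le K^{t}\norm{D\phi_\om(y)}^t$. Summing over all such $\om$ and recognizing the resulting sum over $y$ as $F_t^{(n)}(\one)(y)$ (by the same collapse of the indicator, using $y \in X_v$ and disjointness of the $X_v$), I obtain
\begin{equation*}
K^{-t} F_t^{(n)}(\one)(y) \le G_t^{(n)}(\one)(x) \le K^{t} F_t^{(n)}(\one)(y).
\end{equation*}

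Finally, since $y \in X_v \subset X$, I would invoke Proposition~\ref{transunifbd} to bound $F_t^{(n)}(\one)(y)$ between $M_t^{-1}K^{-t}e^{nP(t)}$ and $M_t K^{t}e^{nP(t)}$; inserting this into the previous display gives the asserted two-sided estimate $M_t^{-1}K^{-2t}e^{nP(t)} \le G_t^{(n)}(\one)(x) \le M_t K^{2t}e^{nP(t)}$. The exponent of $K$ doubles from $t$ to $2t$ precisely because one factor of $K^{t}$ is spent moving from the boundary point $x \in S_v$ to the interior point $y \in X_v$, while a second factor is already built into the interior estimate of Proposition~\ref{transunifbd}. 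The argument is routine; the only point demanding attention is the bookkeeping of the distortion constant, namely verifying that $x$ and $y$ lie in the \emph{same} set $S_{t(\om)}$ --- which is exactly where the restriction $t(\om)=v$ enters --- so that \eqref{bdp}, valid only for pairs of points in a common $S_{t(\om)}$, may be applied legitimately.
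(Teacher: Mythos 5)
Your proposal is correct and follows essentially the same route as the paper's own proof: compare $G_t^{(n)}(\one)$ at a point of $S_v$ with $F_t^{(n)}(\one)$ at a point of $X_v$ via one application of the bounded distortion property \eqref{bdp}, then invoke Proposition \ref{transunifbd}. (Incidentally, both your argument and the paper's derivation yield $M_t K^{2t}e^{nP(t)}$ as the upper bound, so the $M_t^{-1}$ appearing in the stated upper bound of \eqref{eq:ftnunibdex} is a typo in the statement, not a defect of your proof.)
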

\begin{proof} The proof follows easily by Proposition \ref{transunifbd} and the BDP, \eqref{bdp}. Let $y \in S_v$ and $x \in X_v$ for some $v \in V$. Then,
$$
G_t^{(n)}(\one)(y)= \sum_{\om \in E_A^n(v)} \norm{D \phi_\om(y)}^t  \overset{\eqref{bdp}}{\leq} K^{t} \sum_{\om \in E_A^n(v)} \norm{D \phi_\om(x)}^t=K^{t} F_t^n (\one) (x) \overset{\eqref{eq:ftnunibd}}{\leq}K^{2t}e^{n P(t)}M_t
$$
and similarly
\begin{equation*}
\begin{split}
G_t^{(n)}(\one)(y)&= \sum_{\om \in E_A^n(v)} \norm{D \phi_\om(y)}^t  \overset{\eqref{bdp}}{\geq} K^{-t} \sum_{\om \in E_A^n(v)} \norm{D \phi_\om(x)}^t\\
&=K^{-t} F_t^n (\one) (x) \overset{\eqref{eq:ftnunibd}}{\geq}K^{-2t} M_t^{-1}e^{n P(t)}.
\end{split}
\end{equation*}
The proof is complete
\end{proof}
We also consider the normalized operators
\[ \tilde{G}_t(g)(x) := \lambda_t^{-1} G_t(g)(x) = \lambda_t^{-1}\sum_{e \in E_A} \norm{D\phi_e(x)}^t g(\phi_e(x)) \chi_{S_{t(e)}}(x),\]
where $\lambda_t = e^{P(t)}$. Replicating the proof of \cite[Lemma 6.1.1]{Mauldin_Urbanski_2003} we obtain
\begin{prop}
\label{prop:aeG} Let $\fs$ be a finitely irreducible, maximal CGDMS. If $t \in \Fin(\fs)$ and $m_t$ is of null boundary then the operator
$ \tilde{G}_t :C(X) \ra C(X)$ is almost periodic.
\end{prop}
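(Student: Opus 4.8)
The plan is to verify the Arzel\`a--Ascoli criterion directly on the compact metric space $S=\bigsqcup_{v\in V}S_v$, exactly as in the proof of Proposition \ref{prop:ae}, the only change being that all distortion estimates are now taken on the sets $S_v$ rather than $X_v$. Almost periodicity means that for every $g\in C(S)$ the orbit $\{\tilde{G}_t^n(g)\}_{n\ge 0}$ is relatively compact in $(C(S),\|\cdot\|_\infty)$; since $S$ is compact, it suffices to show that this orbit is uniformly bounded and equicontinuous. Note first that condition (iv) of Definition \ref{gdmsdef} gives $\phi_\om(S_{t(\om)})\subset S_{i(\om)}\subset S$ for every $\om\in E_A^\ast$, so all compositions appearing in $\tilde{G}_t^n$ are well defined and $\tilde{G}_t^n(g)\in C(S)$.

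Uniform boundedness is immediate from the nonnegativity of the coefficients of $G_t$ together with Proposition \ref{transunifbd2}: for all $x\in S$ and $n\in\N$,
$$|\tilde{G}_t^n(g)(x)|\le \|g\|_\infty\,\tilde{G}_t^n(\one)(x)=\|g\|_\infty\,\lambda_t^{-n}G_t^{(n)}(\one)(x)\overset{\eqref{eq:ftnunibdex}}{\le} K^{2t}M_t^{-1}\|g\|_\infty,$$
a bound independent of $n$.

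The heart of the argument is equicontinuity. Fix $g\in C(S)$ and $\varepsilon>0$. Since the finitely many sets $S_v$ are disjoint and compact, points at distance less than $\min_{u\ne v}\dist(S_u,S_v)$ lie in a common $S_v$, so it is enough to estimate $|\tilde{G}_t^n(g)(x)-\tilde{G}_t^n(g)(y)|$ for $x,y\in S_v$. I would split this difference as
\begin{align*}
\tilde{G}_t^n(g)(x)-\tilde{G}_t^n(g)(y)
&=\lambda_t^{-n}\sum_{\om\in E_A^n(v)}\|D\phi_\om(x)\|^t\big(g(\phi_\om(x))-g(\phi_\om(y))\big)\\
&\quad+\lambda_t^{-n}\sum_{\om\in E_A^n(v)}\big(\|D\phi_\om(x)\|^t-\|D\phi_\om(y)\|^t\big)g(\phi_\om(y)).
\end{align*}
For the first sum, each $\phi_\om$ is a composition of $n$ maps of Lipschitz constant at most $s<1$, so $d(\phi_\om(x),\phi_\om(y))\le s^n d(x,y)\le d(x,y)$; the uniform continuity of $g$ on $S$ then furnishes $\delta>0$, independent of $\om$ and $n$, so that $d(x,y)<\delta$ forces $|g(\phi_\om(x))-g(\phi_\om(y))|<\varepsilon$, whence this sum is at most $\varepsilon\,\tilde{G}_t^n(\one)(x)\le \varepsilon K^{2t}M_t^{-1}$. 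For the second sum I would first promote condition (iv) to a uniform distortion bound for compositions: writing $\|D\phi_\om(x)\|=\prod_{k}\|D\phi_{\om_k}(x_k)\|$ via the chain rule, applying the per-edge Hölder estimate, and summing the resulting terms against the geometric factors $s^{(n-k)\alpha}$ supplied by contraction, one obtains a constant $\tilde{L}$ with $\big|\tfrac{\|D\phi_\om(x)\|}{\|D\phi_\om(y)\|}-1\big|\le \tilde{L}\, d(x,y)^\alpha$ for all $\om\in E_A^\ast$ and $x,y\in S_{t(\om)}$. Combined with \eqref{bdp}, which confines the ratio to $[K^{-1},K]$, and the elementary inequality $|a^t-1|\le c_t|a-1|$ on that interval, this yields $\big|\|D\phi_\om(x)\|^t-\|D\phi_\om(y)\|^t\big|\le c_t\tilde{L}\, d(x,y)^\alpha\|D\phi_\om(y)\|^t$, so the second sum is at most $\|g\|_\infty c_t\tilde{L} K^{2t}M_t^{-1}\, d(x,y)^\alpha$. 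Both bounds are uniform in $n$, which is precisely equicontinuity.

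I expect the uniform distortion estimate for arbitrarily long compositions $\phi_\om$ to be the main obstacle: the per-edge control of condition (iv) must be upgraded to a single Hölder constant $\tilde{L}$ valid for all word lengths, and this is exactly where the contraction ratio $s<1$ is essential, since it forces the telescoped sum $\sum_{k\ge 0}s^{k\alpha}$ to converge. With uniform boundedness and equicontinuity in hand, Arzel\`a--Ascoli gives relative compactness of every orbit, i.e.\ almost periodicity. The null-boundary hypothesis on $m_t$ is inherited from the setting of \cite[Lemma 6.1.1]{Mauldin_Urbanski_2003} and is invoked only where that lemma uses it; it plays no role in the sup-norm estimates above, which rely solely on the BDP \eqref{bdp} and the uniform contraction.
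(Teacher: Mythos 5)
Your proposal is correct and follows essentially the same route as the paper, which proves this proposition simply by invoking the Arzel\`a--Ascoli argument of \cite[Lemma 6.1.1]{Mauldin_Urbanski_2003} with $X$ replaced by $S$: uniform boundedness via Proposition \ref{transunifbd2} and equicontinuity via the telescoped H\"older distortion estimate for compositions, exactly as you sketch. The only cosmetic caveats are that the Lipschitz bound $d(\phi_\om(x),\phi_\om(y))\le s^n d(x,y)$ should carry the extra factor $MK$ from \eqref{boundbyderlip} when $x,y\in S_v$ rather than $X_v$ (harmless for the uniform-continuity step), and that the constant $K^{2t}M_t^{-1}$ in your boundedness display reproduces a typo in the paper's statement of \eqref{eq:ftnunibdex}, whose own proof yields $K^{2t}M_t$.
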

We finish this section with a useful extension theorem.
\begin{thm}
\label{thm:Gtrhot}  
Let $\fs$ be a finitely irreducible, maximal CGDMS. If $t \in \Fin(\fs)$ and $m_t$ is of null boundary then there exists a unique continuous function $\tilde{\rho}_t: S \rightarrow [0,\infty)$ so that:
\begin{enumerate}
\item \label{eq:thm1exuniext} 
$\tilde{G}_t \tilde{\rho}_t = \tilde{\rho}_t $ %\item $\int \tilde{\rho}_t dm_t = 1.$,
\item \label{eq:thm1boundext} $M_t^{-1}K^{-2t} \leq \tilde{\rho}_t \leq M_t K^{2t}$,
\item \label{eq:thm1ext}$\tilde{\rho}_t|_{X}=\rho_t$, where $\rho_t$ is as in Theorem \ref{thm:Ftrhot},
\item \label{eq:thm1convext} $\{\tilde{G}_t^n(\one) \}_{n=1}^\infty$ converges uniformly to $\tilde{\rho}_t$ on $S$.
%\item \label{eq:rnrn} $\rho_t|_{J_\fs} = \frac{d \mu_t}{d m_t}$.
\end{enumerate}
\end{thm}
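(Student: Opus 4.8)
The plan is to follow exactly the route used for Theorem \ref{thm:Ftrhot}, replacing its two analytic inputs by their $S$-counterparts: the uniform two-sided bounds of Proposition \ref{transunifbd2} and the almost periodicity of $\tilde G_t$ from Proposition \ref{prop:aeG} (the latter being the only place the null-boundary hypothesis enters). Before invoking the fixed-point machinery, I would first record the compatibility between $G_t$ and $F_t$ on $X$. For $x\in X_v\subset S_v$ and $e\in E$ one has $\chi_{S_{t(e)}}(x)=\chi_{X_{t(e)}}(x)$ because the $S_w$ and the $X_w$ are disjoint, and whenever this indicator is nonzero $\phi_e(x)\in X_{i(e)}\subset X$; hence $G_t(g)|_X=F_t(g|_X)$ and, after normalizing, $\tilde G_t(g)|_X=\tilde F_t(g|_X)$. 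Iterating gives $\tilde G_t^{\,n}(\one)|_X=\tilde F_t^{\,n}(\one)$, which by Theorem \ref{thm:Ftrhot}\eqref{eq:thm1conv} already converges uniformly on $X$ to $\rho_t$. Thus, once the full sequence on $S$ is shown to converge, its limit automatically restricts to $\rho_t$, yielding \eqref{eq:thm1ext}, while the bound \eqref{eq:thm1boundext} follows by passing the inequalities of Proposition \ref{transunifbd2} to the limit.

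For existence I would argue as in \cite[Theorem 6.1.2]{Mauldin_Urbanski_2003}. By Proposition \ref{prop:aeG} the orbit $\{\tilde G_t^{\,n}(\one)\}_{n\ge 0}$ is relatively compact in $C(S)$, so it admits uniformly convergent subsequences, and by Proposition \ref{transunifbd2} every subsequential limit $\rho$ satisfies $M_t^{-1}K^{-2t}\le\rho\le M_tK^{2t}$; in particular each such limit is continuous and bounded away from $0$. Positivity and these uniform bounds, together with the finite irreducibility of $\cS$, are then used to upgrade subsequential convergence to convergence of the whole sequence and to identify the common limit $\tilde\rho_t$ as a fixed point of $\tilde G_t$, giving \eqref{eq:thm1exuniext} and \eqref{eq:thm1convext}.

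The main obstacle is precisely this upgrade: relative compactness alone yields only convergent subsequences, and one must exclude a nontrivial peripheral spectrum of $\tilde G_t$ (unimodular eigenvalues other than $1$) to obtain convergence of $\{\tilde G_t^{\,n}(\one)\}$ itself. I would resolve this exactly as for $\tilde F_t$: pass to the pointwise functions $\underline\rho=\liminf_n\tilde G_t^{\,n}(\one)$ and $\overline\rho=\limsup_n\tilde G_t^{\,n}(\one)$, observe that both are fixed points trapped between the same positive bounds, and run a strict-positivity/maximum-principle argument built on finite irreducibility (examining where $\overline\rho-\underline\rho$ attains its extremum and using that the weighted averaging defining $\tilde G_t$ mixes all vertices) to force $\underline\rho=\overline\rho$.

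Uniqueness I would then deduce by exploiting the compatibility identity once more. Restricting any continuous positive fixed point $g$ of $\tilde G_t$ to $X$ produces a positive fixed point of $\tilde F_t$, hence a scalar multiple $c\,\rho_t$ of $\rho_t$ (normalize by $\int g|_X\,dm_t$ and apply the uniqueness in Theorem \ref{thm:Ftrhot}\eqref{eq:thm1exuni}). The self-referential identity $g(y)=\tilde G_t^{\,n}g(y)=\sum_{\om\in E_A^n(v)}\lambda_t^{-n}\norm{D\phi_\om(y)}^t\,g(\phi_\om(y))$ for $y\in S_v$, combined with $\diam(\phi_\om(S_{t(\om)}))\to0$ (so that $\phi_\om(y)$ approaches $X$, where $g=c\rho_t$ is already determined) and the continuity of $g$, then pins $g$ down completely on $S$. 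Since $\tilde\rho_t$ arises as the limit of $\tilde G_t^{\,n}(\one)$ whose $X$-restrictions tend to $\rho_t$, the scalar is $c=1$, which simultaneously confirms \eqref{eq:thm1ext} and shows any two continuous positive fixed points coincide, completing \eqref{eq:thm1exuniext}.
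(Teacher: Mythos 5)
Your proposal is correct and follows essentially the same route as the paper: the paper likewise defers items \eqref{eq:thm1exuniext}, \eqref{eq:thm1boundext} and \eqref{eq:thm1convext} to the argument of \cite[Theorem 6.1.2]{Mauldin_Urbanski_2003} via Propositions \ref{transunifbd2} and \ref{prop:aeG}, and proves \eqref{eq:thm1ext} from exactly the compatibility identity you record ($\chi_{S_{t(e)}}(x)=\chi_{X_{t(e)}}(x)$ for $x\in X$, hence $\tilde G_t$ restricted to $X$ agrees with $\tilde F_t$) together with the uniqueness of the fixed point of $\tilde F_t$ from Theorem \ref{thm:Ftrhot}. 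Your additional sketch of the $\liminf/\limsup$ upgrade and the $\diam(\phi_\om(S_{t(\om)}))\to 0$ argument for uniqueness on all of $S$ fills in details the paper leaves to the cited reference, but does not change the method.
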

\begin{proof} We will only discuss the proof of \eqref{eq:thm1ext}. The other statements follow as in the proof of \cite[Theorem 6.1.2]{Mauldin_Urbanski_2003}.

Let $x \in X$. Then,
\begin{equation}
\label{eq:gtuniq}
\begin{split}
\tilde{\rho}_t(x)&\overset{\eqref{eq:thm1exuniext}}{=}\tilde{G}_t(\tilde{\rho}_t)(x) \\
&=\lambda_t^{-1}\sum_{e \in E_A} \norm{D\phi_e(x)}^t \tilde{\rho}_t(\phi_e(x)) \chi_{S_{t(e)}}(x) \\
&= \lambda_t^{-1}\sum_{e \in E_A} \norm{D\phi_e(x)}^t \tilde{\rho}_t(\phi_e(x)) \chi_{X_{t(e)}}(x)\\
&=\tilde{F}_t(\tilde{\rho}_t)(x).
\end{split}
\end{equation}
By Theorem \ref{thm:Ftrhot} we know that $\rho_t:X \ra [0,\infty)$ is the unique continuous function such that $\tilde{F}_t (\rho_t)=\rho_t$. Therefore, \eqref{eq:gtuniq} implies that $\rho_t=\tilde{\rho}_t$ in $X$, and thus \eqref{eq:thm1ext} has been proven. 
\end{proof}

%%%%%%%%%%%%%%%%%%%%%%%%%%%%%%%%%%%%%%%%%%%%%%%%%%%%%%%%%%%%%
\section{\texorpdfstring{Derivative bounds for $\rho_t$}{}}
\label{sec:derbound}
In this section we will prove derivative bounds for the eigenfunctions of the Perron-Frobenius operator $F_t$ on maximal CGDMSs. These bounds will play a crucial role in our numerical method. We stress that, as in Section \ref{sec:rn}, the open set condition is not needed for any of the results in this section.

We start by introducing some standard notation. A \textit{multi-index} $\alpha$ is an $n$-tuple of non-negative integers $\alpha_i$. The \textit{length} of $\alpha$ is
\[|\alpha| := \sum_{i=1}^n \alpha_i,\]
and we also denote
$$\alpha ! = \alpha_1! \cdot \alpha_2! \cdots \alpha_n!.$$
For a weakly $|\alpha|$-differentiable function $u$, we define the operator $D^\alpha$ by
\[D^\alpha u = \left( \frac{\partial }{\partial x_1}\right)^{\alpha_1} \cdots \left( \frac{\partial }{ \partial x_n}\right)^{\alpha_n} (u).\]

As in Section \ref{sec:rn},
$$
\cS= \big\{ V,E,A, t,i, \{X_v\}_{v \in V}, \{\f_e\}_{e\in E} \big\}
$$
will denote a maximal CGDMS and we will again assume that the sets $X_v$ are disjoint. Moreover, we will let
$$\eta_\cS=\min_{v \in V} \dist (X_v, \partial W_v).$$

\begin{thm} \label{thm:derivative_bound} %eq:dern>2
Let $\fs = \{\phi_e\}_{e \in E}$ be a a finitely irreducible, maximal CGDMS in $\R^n, n \geq 2$. Let $t \in \text{Fin}(\fs)$, let  $\rho_t$ be as in Theorem \ref{thm:Ftrhot}, and let $\alpha$ be any multi-index. 
\begin{enumerate}
\item \label{realanal} The eigenfunctions $\rho_t$ admit real analytic extensions on $\Int (S)=\cup_{v\in V}\Int (S_v)$.
\item \label{eq:dern>2} If $\fs$ consists of M\"obius maps then for any $u,s$ such that  $0<u<s<\sqrt{2}-1$,
\begin{equation}\label{eq: derivative Drho}
        |D^\alpha \rho_t (x)| \leq \alpha !  \left( \frac{ n^{1/2}}{u\,\eta_\cS}\right)^{|\alpha|} c(s)^t \rho_t(x), \quad \forall x \in X,
    \end{equation}
    where   $c(s)=(1-s(2+s))^{-1}$.
\item \label{eq:dern=2}  If $n=2$, then 
\begin{equation}
\label{eq: derivative Drhoanaly}
        |D^\alpha \rho_t (x)| \leq \alpha !  \left(\frac{ML}{s\, \eta_\cS}\right)^{|\alpha|} \exp \left(t C_r \left( \frac{L}{L-2} \right)^2 \right) \rho_t(x), \quad \forall x \in X, 
    \end{equation}
    where $r,s, M, L$ can be any numbers such that $r \in (0,1), s \in (0,r), M>1, L>2$ and $$C_r=\log \left( \frac{(1+r\eta)^3}{(1-r\eta)^5}\right).$$
\end{enumerate}
\end{thm}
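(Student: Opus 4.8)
The plan is to exploit complexification: since each $\phi_e$ is a conformal (hence, for $n\ge 3$, M\"obius, and for $n=2$ holomorphic) map of $W_{t(e)}$ into $W_{i(e)}$, it admits a holomorphic extension to a complex neighborhood of $S_{t(e)}$ in $\mathbb C^n$. The key idea is to run the iteration $\tilde G_t^n(\mathbf 1) \to \tilde\rho_t$ of Theorem \ref{thm:Gtrhot} entirely in the complex domain, show the iterates are uniformly bounded holomorphic functions on a fixed polydisc-type neighborhood, and then recover derivative bounds at real points via the Cauchy estimates. First I would fix a point $x\in X_v\subset\mathbb R^n$ and consider, for each admissible $\omega\in E_A^n$, the summand $\|D\phi_\omega(z)\|^t\,\chi$. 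The quantity $\|D\phi_\omega(z)\|^t$ must be given a holomorphic meaning on a complex ball $B(x,R)\subset\mathbb C^n$; for conformal maps $\|D\phi_\omega(z)\|$ equals $|\det D\phi_\omega(z)|^{1/n}$ up to the conformal factor, so one takes a holomorphic branch of the $t$-th power of the (complexified) conformal derivative. This is where one needs to verify non-vanishing and single-valuedness of the branch on the chosen neighborhood.

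The heart of the argument is a geometric estimate controlling how far into $\mathbb C^n$ one can push the extension while keeping the complexified conformal factor close to its real value. For the M\"obius case \eqref{eq:dern>2}, I would write $\phi_e$ explicitly as a composition of inversions and similarities and estimate the complexified derivative on a ball of radius comparable to $\eta_\cS=\min_v\dist(X_v,\partial W_v)$; the constraint $0<u<s<\sqrt2-1$ and the factor $c(s)=(1-s(2+s))^{-1}$ strongly suggest that one bounds the ratio of the complex conformal factor to the real one by $c(s)$ on a ball of relative radius $s$, then applies Cauchy's inequality on the slightly smaller ball of relative radius $u$ to produce the $\big(n^{1/2}/(u\,\eta_\cS)\big)^{|\alpha|}$ factor (the $n^{1/2}$ and $\alpha!$ being exactly the constants from the multivariate Cauchy estimate on a polydisc inscribed in a ball). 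For the planar case \eqref{eq:dern=2}, I would instead use the Koebe-type/BDP distortion estimate \eqref{bdp} together with the constant $L>2$ from the BDP, complexify $\phi_e$ as a genuine holomorphic function, and control $|\|D\phi_e(z)\|^t|/\|D\phi_e(x)\|^t$ by $\exp(tC_r(\cdots))$ on a disc of relative radius $r$, with $C_r=\log\big((1+r\eta)^3/(1-r\eta)^5\big)$ arising from an explicit distortion bound for the complexified logarithmic derivative; again Cauchy on radius $s<r$ gives $(ML/(s\,\eta_\cS))^{|\alpha|}$.

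Concretely, the scheme is: (i) complexify each generator and fix a branch of $\|D\phi_e(\cdot)\|^t$ holomorphic on a $\rho$-ball about $X_v$; (ii) prove the pointwise bound $\big|\,\|D\phi_\omega(z)\|^t\big| \le C^t\,\|D\phi_\omega(x)\|^t$ on that ball, with $C$ independent of $\omega$ and $n$, by multiplicativity of the distortion over the word $\omega$ (this is exactly where the chain rule together with \eqref{bdp} or the explicit M\"obius formula enters, and why the constant does not blow up with $|\omega|$); (iii) conclude that the complexified iterates $\widetilde G_t^n(\mathbf 1)$ are holomorphic with $|\widetilde G_t^n(\mathbf 1)(z)|\le C^t\,\widetilde G_t^n(\mathbf 1)(x)$, hence by uniform convergence (Theorem \ref{thm:Gtrhot}\eqref{eq:thm1convext}) the limit $\tilde\rho_t$ extends holomorphically with $|\tilde\rho_t(z)|\le C^t\rho_t(x)$ on the ball; (iv) apply the multivariate Cauchy estimate $|D^\alpha f(x)|\le \alpha!\,\sup_{\bar B}|f|\,/\,r^{|\alpha|}$ (with the $n^{1/2}$ correction for inscribing a polydisc in the ball of radius proportional to $\eta_\cS$) to obtain \eqref{eq: derivative Drho} and \eqref{eq: derivative Drhoanaly}. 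Statement \eqref{realanal} is then immediate, since a uniform limit of holomorphic functions that is locally bounded is holomorphic, so $\tilde\rho_t$ is holomorphic on a complex neighborhood of $\Int(S)$ and therefore real analytic there.

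The main obstacle I anticipate is step (ii): establishing the $\omega$-uniform bound on the complexified conformal factor $\big|\,\|D\phi_\omega(z)\|^t\big|$. Naively the distortion of a single map on a complex ball could compound multiplicatively over the $|\omega|$ letters and blow up, so the delicate point is to show that the complexified distortion contracts or at least stays bounded along the composition — this is precisely why one must push the analysis through the geometry of the individual generators (the explicit inversion estimate giving $c(s)$ in the M\"obius case, and the sharpened distortion constant $C_r$ in the planar case) rather than iterating a crude per-letter bound. Getting the right neighborhood size, tied to $\eta_\cS$, so that every $\phi_e$ maps the complex ball about $S_{t(e)}$ back into the complex ball about $S_{i(e)}$ (a complexified analogue of the invariance $\phi_e(S_{t(e)})\subset S_{i(e)}$), is the technical crux that makes the uniform-in-$\omega$ estimate possible.
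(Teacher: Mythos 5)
Your overall architecture --- complexify the conformal factor, get bounds uniform over words, pass to the limit by normal families, and finish with multivariate Cauchy estimates on a polydisk inscribed in a ball of radius proportional to $\eta_\cS$ --- is exactly the paper's, and your reading of where $\alpha!$, $n^{1/2}$ and the radii $u<s$ come from is correct. But the step you yourself flag as ``the main obstacle'' is precisely where your sketch diverges from a working proof, and your proposed resolutions would not close it. In the M\"obius case the paper does \emph{not} prove any ``multiplicativity of the distortion over the word,'' nor any complexified invariance $\phi_e(\mbox{complex ball})\subset(\mbox{complex ball})$ allowing iteration of the complexified dynamics; neither is needed. The point is that for \emph{every} finite word $\om$, the composed map $\phi_\om$ is itself a single M\"obius map $\tau_{b_\om}\circ\lambda_\om L_\om\circ\iota^{\varepsilon_\om}\circ\tau_{-a_\om}$ with pole $a_\om\notin W_{t(\om)}$, hence $|x-a_\om|\ge\eta_\cS$ for $x\in X_{t(\om)}$. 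One then complexifies only the scalar ratio $\rho_\om(z)=|x-a_\om|^2/\sum_j(z_j-(a_\om)_j)^2$ (holomorphic because the real quadratic form extends without conjugation), and a one-shot Cauchy--Schwarz estimate gives $|\rho_\om(z)|\le c(s)=(1-s(2+s))^{-1}$ on $B_{\C^n}(x,s\eta_\cS)$, with a bound that is manifestly independent of $|\om|$. Nothing compounds over letters because nothing is ever iterated in the complex domain; the sums $b_m(z)=\sum_{\om}e^{-P(t)m}\rho_\om(z)^t\|D\phi_\om(x)\|^t$ are controlled by the real bound $F_t^m(\one)(x)\le K^tM_te^{mP(t)}$ and Montel does the rest.

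The $n=2$ case as you sketch it cannot work as stated: $\|D\phi_\om(z)\|=|\psi_\om'(z)|$ involves a modulus, so it has no holomorphic extension in one complex variable, and you cannot ``control $|\,\|D\phi_e(z)\|^t|$'' by directly complexifying the conformal factor. The paper's device is to apply the Koebe distortion theorem to the \emph{composed} univalent map $\psi_\om$ on $B(0,\eta_\cS)$ (again a one-shot bound, not per letter), obtain $|\log\rho_\om|\le C_r$, expand $\Re(\log\rho_\om)$ as a double power series in the real coordinates $(x,y)$, and then complexify \emph{that} as a function $F(z,w)$ of two complex variables on a polydisk in $\C^2$; the factor $(L/(L-2))^2$ comes from summing $\sum(2/L)^{n+k}$ with the binomial bound on the coefficients, so $L>2$ is a free parameter of this construction, not the BDP constant from Definition~\ref{gdmsdef}(iv) as you assumed. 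Without the two-variable complexification and the word-level (rather than letter-level) distortion estimates, steps (ii)--(iii) of your scheme do not go through.
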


\begin{proof}
 We will denote translation by $a \in \R^n$ by  $\tau_a(x)=x+a, x\in \R^n$. The definition of the M\"obius group implies that for all $\omega \in E_A^\ast,$ the map $\phi_\om$ has the form
\begin{equation*}
    \phi_\omega = \tau_{b_\om}  \circ \lambda_{\omega} L_\omega \circ \iota^{\varepsilon_\om} \circ \tau_{-a_\om},
\end{equation*}
where $a_\omega, b_\omega \in\R^n$,$\lambda_\omega>0$, $L_\omega$ is an orthogonal transformation,  $\varepsilon_\om \in \{0,1\}$,  $\iota^{0}=\text{Id}$ and
$$\iota^1 (z)=\iota (z)=\begin{cases} \frac{1}{z},& z \in \C \\
\frac{z}{|z|^2}, &z\in \R^n, n \geq 3. \end{cases}$$ 
%is either the identity map or the inversion about a ball $B(a_\omega,r_\omega).$ 
Thus,
\[\norm{D \phi_\omega  (z)} =
\begin{cases}
\frac{\lambda_\omega }{|z-a_\om|^2} &\text{ if } \varepsilon_\omega=1,  \\
\lambda_\omega &\text{ if } \varepsilon_\omega = 0 \quad (\mbox{ i.e. } \iota^{\varepsilon_\omega}=\text{Id}).
\end{cases}\]
When $\iota^{\varepsilon_\om}$ is not the identity we have that $a_\omega \not\in W_{t(\om)}$. 

We will first prove statement \eqref{eq:dern>2}. We fix $v \in V$ and $x \in X_v$. For any $\omega \in E_A^*(v)$ we define a function $\rho_\omega :\C^n \rightarrow \C_\infty:=\C \cup \{\infty\}$ given by 
\[
\rho_\omega (z) = \begin{cases}
\frac{|x - a_\omega|^2}{\sum_{j=1}^n\left( z_j - (a_\omega)_j\right)^2} &\text{ if } \varepsilon_\omega=1\\
1, &\text{ if }  \varepsilon_\omega=0.
\end{cases}
\]

For simplicity of notation we let $\eta:=\eta_\cS$. Let $0<u<s<\sqrt{2}-1$ and set $r=s \eta$.
%$$\eta:=\min \{\dist (X_v, \partial W_v):v \in V\},$$
%and
%Moreover, for all $y \in X$, we define
%\begin{equation}\label{Uy}
%     U_y = B_{\C^n}(y,r), \text{ where } r = \frac{1}{4n} \min \{ 1, \dist(X, \partial W )^2\}.
%\end{equation}
We will first show that if $\om \in E_A^n(v)$ then
\begin{equation}
\label{eq:rhobound}
|\rho_\omega (z)| \leq c(s), \text{ for all } z \in B_{\C^n}(x, r):=\{z \in \C^n: \norm{z-x}<r\}, 
\end{equation}
where $\norm{\cdot}$ denotes the Euclidean norm in $\C^n$. Note that if $\rho_\omega(z)=1$, we have nothing to prove. Therefore we may assume that 
$$ \rho_\omega(z) = \frac{|x - a_\omega|^2}{\sum_{j=1}^n\left( z_j - (a_\omega)_j\right)^2} .$$
Let $z \in B_{\C^n}(x, r)$. Then:
\begin{align*}
    \sum_{j=1}^n (z_j - (a_\omega)_j)^2 &= \sum_{j=1}^n (z_j-x_j+x_j - (a_\omega)_j)^2 
    = \sum_{j=1}^n (z_j-x_j)^2 + \sum_{j=1}^n (x_j-(a_\omega)_j)^2 \\
    &\quad+ 2 \sum_{j=1 }^n (z_j-x_j) ( x_j-(a_\omega)_j) ,
    \end{align*}
    and consequently
    \begin{equation}
    \label{eq:splitsum1}
    \begin{split}
 \norm{ \sum_{j=1}^n (z_j - (a_\omega)_j)^2 }
    & \geq  \norm{\sum_{j=1}^n (x_j-(a_\omega)_j)^2 } - \sum_{j=1}^n |z_j-x_j|^2 - 2 \sum_{j=1 }^n |z_j-x_j| | x_j-(a_\omega)_j| \\
    &\overset{x \in \R^n}{=} |x - a_\omega|^2 - \norm{z-x}^2 - 2 \sum_{j=1 }^n |z_j-x_j| | x_j-(a_\omega)_j|.
    \end{split}
\end{equation}
Since $z \in B_{\C^n}(x, s \eta)$ and $a_\om \notin W_{t(\om)}$
\begin{equation}
\label{|z-y|bd}
\frac{\|z-x\|}{|x-a_\om|}\leq \frac{s \eta}{\eta}=s.
\end{equation}
Using the Cauchy-Schwarz inequality,
\begin{equation}
\begin{split}
\label{eq:sumbound1}\sum_{j=1 }^n |z_j-x_j| | x_j-(a_\omega)_j| &\leq \left(\sum_{j=1 }^n |z_j-x_j|^2\right)^{1/2}\left( \sum_{j=1 }^n|x_j-(a_\omega)_j|^2\right)^{1/2} \\
&=\|z-x\| |x-a_{\om}| \overset{\eqref{|z-y|bd}}{\leq} s |x-a_\om|^2.
\end{split}
\end{equation}
Thus,
\begin{equation}
\begin{split}
\label{eq:sumlbound}
    \norm{\sum_{j=1}^n (z_j-(a_\omega)_j)^2 }&\overset{\eqref{eq:splitsum1} \wedge \eqref{|z-y|bd} \wedge \eqref{eq:sumbound1}}{\geq} |x-a_\omega|^2-s^2|x-a_\omega|^2 -2 s |x  - a_\omega|^2 \\
    &= (1-s(2+s))|x - a_\omega|^2.
\end{split}
\end{equation}
Therefore,
\begin{align*}
    |\rho_\omega(z)| = \frac{|x - a_\omega |^2}{\norm{\sum_{j=1}^n\left( z_j - (a_\omega)_j\right)^2}} \overset{\eqref{eq:sumlbound}}{\leq} (1-s(2+s))^{-1}= c(s).
\end{align*}
Since $B_{\C^n}(x, r)$ is simply connected, the analytic function
$$z \longmapsto \rho_\omega(z), \quad z \in B_{\C^n}(x, r),$$
has an analytic logarithm, see e.g. \cite[Lemma 6.1.10]{Krantz}. Thus,
$$ z \longmapsto \rho_\omega(z)^t$$
is analytic for $z\in B_{\C^n}(x, r)$. We then let
$$b_m(z) = \sum_{\om \in E_A^m(v)}e^{-P(t)m} \rho_\omega(z)^t \norm{ D\phi_\omega (x)}^t, \quad z \in B_{\C^n}(x, r).$$
Using Proposition \ref{transunifbd} we see that for all $m \in \N$ and $z \in B_{\C^n}(x, r)$,
\begin{equation}
\label{eq:bnyzbound}
\begin{split}
    |b_m(z)| &\leq e^{-P(t)m}\sum_{\om \in E_A^m(v)} |\rho_\omega(z)|^t \norm{ D\phi_\omega (x)}^t \overset{\eqref{eq:rhobound}}{\leq} e^{-P(t)m} c(s)^t \sum_{\om \in E_A^m(v)} \norm{ D\phi_\omega (x)}^t\\
    & \leq c(s)^t e^{-P(t)m}F_t^m(\one) (x) \overset{\eqref{eq:ftnunibd}}{\leq} c(s)^t K^tM_t.
    \end{split}
\end{equation}
Since the maps $z \to \rho_\omega(z)^t$ are analytic in $B_{\C^n}(x, r)$, Montel's theorem (see e.g. \cite[Proposition 6]{nara}) and \eqref{eq:bnyzbound} imply that the maps $b_m$ are analytic in $B_{\C^n}(x, r)$. 
Let $\tilde{s} \in (u,s)$ and set
$$\tilde{r}=\tilde{s} \eta.$$
A second application of Montel's Theorem implies that there is some subsequence $(b_{m_k})_{k=1 }^\infty$ and a holomorphic function $ b:B_{\C^n}(x, \tilde{r})  \rightarrow \C$
such that 
\begin{equation}
\label{eq:unifcovntilde}
b_{m_k}  \to b \mbox{ uniformly on }B_{\C^n}(x, \tilde{r}).
\end{equation}
Therefore, Theorem \ref{thm:Ftrhot} \eqref{eq:thm1conv}, \eqref{eq:bnyzbound} and \eqref{eq:unifcovntilde} imply that
\begin{equation}
\label{eq:bybdxi}
b(z) \leq c(s)^t \rho_t(x) \mbox{ for all }z\in  B_{\C^n}(x, \tilde{r}).
\end{equation}
Note that for $z \in B_{\C^n}(x, r) \cap X_v$:
\begin{align*}
    b_m(z) &= \sum_{\om \in E_A^m(v)} e^{-P(t)m} \rho_\omega(z)^t \norm{ D\phi_\omega (x)}^t=e^{-P(t)m} \sum_{\om \in E_A^m(v)} \left(\rho_\omega(z) \frac{\lambda_\omega }{|x -a_\omega|^2}\right)^t \\
    &= e^{-P(t)m} \sum_{\om \in E_A^m(v)} \left(\frac{|x - a_\omega|^2}{\sum_{j=1}^n (z_j-(a_\omega)_j)^2} \frac{\lambda_\omega}{|x -a_\omega|^2}\right)^t\\ 
    &= e^{-P(t)n} \sum_{\om \in E_A^m(v)} \left(\frac{\lambda_\omega }{\sum_{j=1}^n (z_j-(a_\omega)_j)^2}\right)^t\\
    &= e^{-P(t)m} \sum_{\om \in E_A^m(v)} \norm{ D\phi_\omega(z)}^t \\
    &=  e^{-P(t)m} F_t^m(\one)(z).
\end{align*}
Thus, combining Theorem \ref{thm:Ftrhot} \eqref{eq:thm1conv} and \eqref{eq:unifcovntilde} we deduce that 
\begin{equation}
\label{eq:byrhot}
b =\rho_t \mbox{ in }X \cap B_{\C^n}(x, \tilde{r}).
\end{equation}
Recall that the \textit{polydisk metric in $\C^n$} is defined as
$$ \normp{z-w} = \max \left\{ |z_i-w_i|:i = 1 , \ldots n \right\},\quad z,w \in \C^n.$$
A \textit{polydisk in $\C^n$} is a set of the form
$$P(z,r) := \left\{ w \in \C^n : \normp{w-z} <r\right\}, \quad\mbox{ where }z\in \C^n, r>0.$$
It is easy to check that
\begin{equation}\label{eq:polycomp}
    \normp{z-w} \leq \norm{z-w} \leq \sqrt{n} \normp{z-w} .
\end{equation}
Therefore,
$$\overline{P}\left(x,\frac{1}{\sqrt{n}} u \eta \right) \subset \overline{B}_{\C_n} (x,u \eta).$$
Recall that $b$ is holomorphic in $B_{\C_n} (x,\tilde{r})$ which is an open neighborhood of $\overline{B}_{\C_n} (x,u \eta)$. Therefore, if $\alpha$ is any multi-index, applying the Cauchy estimates (see e.g.  \cite[Chapter 1, Proposition 3]{nara}), we see that
\begin{align}\label{derhot}
 |D^\alpha \rho_t (x) | \overset{\eqref{eq:byrhot}}{=}   |D^\alpha b(x)| \leq \alpha !\left( \frac{n^{1/2}}{u \eta}\right)^{|\alpha|} \max_{z \in \partial P(x,\frac{u \eta}{\sqrt{n}}) } |b(z)| \overset{\eqref{eq:bybdxi}}{\leq} \alpha ! \left( \frac{n^{1/2}}{u \eta}\right)^{|\alpha|} c(s)^{t} \rho_t(x) .
\end{align}
Since $v \in V$ and $x \in X_v$ were arbitrary, the proof of statement \ref{eq:dern>2} is complete.

We will now prove statement \ref{eq:dern=2}. We fix $v \in V$ and we define 
\begin{equation}\label{eq:2db}
    b_n(z) = e^{-nP(t)}\sum_{\om \in E_A^n(v)} \norm{D\phi_\om (z)}^t 
\end{equation}
for $z \in W_v$ and $n \in \N$. Note that for $z \in X_v$,
\begin{equation}\label{eq:2db2}
    b_n(z)=e^{-nP(t)} F_t^n(\one)(z).
\end{equation}
Let $\om \in E_A^{\ast}$. Recall that since the maps $\phi_\om$ are conformal we have that either $\norm{D \phi_\om(z) } = |\phi_\om'(z)|$ (when $\phi_\om$ is holomorphic) or $\norm{D \phi_\om(z) } = |(\overline {\phi_\om})'(z)|$ (when $\phi_\om$ is antiholomorphic). By Proposition \ref{transunifbd} 
\begin{equation}\label{eq:2dbnd}
    b_n(z) \overset{\eqref{eq:ftnunibd}}{\leq} K^t M_t, \quad \mbox{ for all }z \in X\mbox{ and }n \in \N.
\end{equation}
For $\om \in E_A^\ast$, define
\[\psi_\om =
\begin{cases}
\phi_\om, \text{ if } \phi_\om \text{ is holomorphic} \\
\bar \phi_\om, \text{ if } \phi_\om \text{ is anti-holomorphic.}
\end{cases}\]
Thus $\norm{D \phi_\om(z) } = |\psi_\om'(z)|$. Fix some $\zeta_v \in X_v$ and, without loss of generality, assume that $\zeta_v = 0.$ Given any $\om \in  E_A^\ast (v)$, define 
\[ \rho_\om(z) = \frac{\psi_\om'(z)}{\psi_\om'(0)}, \quad z \in W_v.
\]
%Recall that $\eta = \min \left\{ 1, \eta \right\}$.
To simplify notation we again let $\eta:=\eta_\cS$. Since $B(0,\eta)$ is simply connected, $\rho_\om$ is analytic and it does not vanish, all of the branches of $\log \rho_\om$ are well defined on $B(0,\eta)$. After choosing a suitable branch, an application of K\"oebe's Distortion Theorem \cite[Theorem 23.1.6]{MRUvol3} gives
$$ |\rho_\om(z)| \leq \frac{1+r\eta}{(1-r\eta)^3}$$
and 
$$ |\arg \rho_\om(z)| \leq 2 \log \left( \frac{1+r\eta}{1-r\eta}\right) $$
on $ \bar B (0,r\eta)$ for $r \in (0,1)$. Therefore $\log \rho_\om = \log |\rho_\om| + i \arg \rho_\om$ is an analytic logarithm for $\rho_\om$ and
\begin{equation}
    |\log \rho_\om (z)| \leq \log \left( \frac{1+r\eta}{(1-r\eta)^3}\right) + 2 \log \left( \frac{1+r\eta}{1-r\eta}\right) := C_r.
\end{equation}
for $z\in \bar B (0,r\eta)$ and $r \in (0,1)$. Therefore we can write $\log \rho_\om$ as a power series
$$ \log \rho_\om = \sum_{m=0}^\infty a_m z^m \text{ in } B(0,r\eta),$$
and by Cauchy estimates we can see that for all $s \leq r,$
\begin{equation}\label{eq:ambd}
    |a_m| \leq \frac{C_r}{s^m \eta^m}.
\end{equation}
Hence, if $z = x+iy \in B(0,r\eta)$
\begin{align*}
    \Re ( \log \rho_\om (z)) &= \Re \left( \sum_{m=0}^\infty a_m (x+iy)^m \right) \\
    &= \Re \left( \sum_{m=0}^\infty a_m \sum_{k=0}^m \binom{m}{k}x^k (iy)^{m-k}\right) \\
    & = \Re \left( \sum_{n=0}^\infty \sum_{k=0}^\infty a_{n+k}\binom{n+k}{k}i^nx^k y^n\right) \\
    &= \sum_{n=0}^\infty \sum_{k=0}^\infty \Re \left( a_{n+k}\binom{n+k}{k}i^n \right)x^k y^n \\
    &:= \sum_{n=0}^\infty \sum_{k=0}^\infty c_{k,n} x^k y^n .
\end{align*}
Thus for all $s \leq r$
\begin{equation}
    |c_{k,n}| \leq |a_{n+k}| \binom{n+k}{k} \leq |a_{n+k}|2^{n+k} \overset{\eqref{eq:ambd}}{\leq} \frac{C_r 2^{n+k}}{(s \eta)^{n+k}}.
\end{equation}

Consider the complex valued function, formally defined on $\C^2$, given by
$$F(z,w) = \sum_{n,k=0}^\infty c_{k,n} z^k w^n, \quad z,w \in \C.$$
Note that for $L > 2$, the function $F$ is holomorphic in the polydisk $P\left(0, \frac{s\eta}{L}\right).$ Indeed,  $(z,w) \in P\left(0, \frac{s\eta}{L}\right)$:
\begin{equation}
\begin{split}
\label{eq:Fzwbound}
    |F(z,w)| &\leq \sum_{k,n=0}^\infty |c_{k,n}||z|^k |w|^n \\
    &\leq \sum_{k,n=0}^\infty \frac{C_r 2^{n+k}}{(s\eta)^{n+k}} \frac{s^{n+k}}{L^{n+k}} \eta^{n+k}\\
    & = C_r \sum_{k,n=0}^\infty \left( \frac{2}{L}\right)^{n+k} = C_r \left( \sum_{k=0}^\infty \left( \frac{2}{L}\right)^k\right)^2 \\
    &=  C_r \left(\frac{L}{L-2}\right)^2 := C_1(r,L).
\end{split}
\end{equation}
 In the following we will use the embedding $\iota : \C \rightarrow \C^2$,
$$ \iota(x+iy) = (x+i0,y+i0)$$
for all $x,y \in \R$. To simplify notation, we let
$$A = \iota (A) \text{ if } A \subset \C.$$
Note also that $B(0,r)=\iota(B(0,r))\subset P(0,r)$. %Indeed, if $p \in B(0,r)$ then $p=(z,w)$ and $z,y \in \R$. So, $|z|^2,|w|^2<z^2+w^2<r^2$ and $\|p\|_P<r$.
Hence, 
\begin{equation}
\label{eq:f=reps}
F = \Re (\log \rho_\om) \mbox{ on }B(0,s\eta/L).
\end{equation}
Let
$$ B_n(z,w) = e^{-nP(t)} \sum_{\om \in E_A^n(v)} \norm{D \phi_\om(0)}^t e^{tF(z,w)}, \quad z,w \in \C, n \in \N.$$
For $(x,y) \equiv x+iy=\zeta \in B\left(0, \frac{s \eta }{L}\right)$
\begin{equation}
\label{eq:Bn=bn}
\begin{split}
    B_n(\zeta) = B_n(x,y) &\overset{\eqref{eq:f=reps}}{=} e^{-nP(t)} \sum_{\om \in E_A^n(v)} \norm{D \phi_\om(0)}^t e^{t\Re (\log \rho_\om(\zeta))}\\
    &= e^{-nP(t)} \sum_{\om \in E_A^n(v)} \norm{D \phi_\om(0)}^t e^{\log |\rho_\om(\zeta)|^t}\\
    &= e^{-nP(t)} \sum_{\om \in E_A^n(v)} \norm{D \phi_\om(0)}^t\left| \frac{\psi_\om' (\zeta)}{\psi_\om'(0)} \right|^t\\
    &= e^{-nP(t)} \sum_{\om \in E_A^n(v)} \norm{D \phi_\om(\zeta)}^t 
    = b_n(\zeta).
    \end{split}
\end{equation}
Now note that for all $(z,w) \in P(0,s\eta/L)$ 
\begin{equation}
\begin{split}
\label{eq:Bnbound}
    |B_n(z,w)| &= \left| e^{-nP(t)} \sum_{\om \in E_A^n(v)} \norm{D \phi_\om(0)}^t e^{tF(z,w)} \right| \\
    &\leq e^{-nP(t)} \sum_{\om \in E_A^n(v)} \norm{D \phi_\om(0)}^t e^{\Re (tF(z,w))}  \\
    & \leq e^{-nP(t)} \sum_{\om \in E_A^n(v)} \norm{D \phi_\om(0)}^t e^{t|F(z,w)|} \\
    &\overset{\eqref{eq:Fzwbound}}{\leq} e^{tC_1(r,L)}e^{-nP(t)} \sum_{\om \in E_A^n(v)} \norm{D \phi_\om(0)}^t\\
    &= e^{tC_1(r,L)} b_n(0).
\end{split}
\end{equation}
Thus,
$$|B_n(z,w)| \overset{\eqref{eq:Bnbound} \wedge \eqref{eq:2dbnd}}{\leq} K^t M_t^{-1}e^{tC_1(r,L)} \quad \mbox{ for }(z,w) \in P(0,s\eta/L).$$
Since the functions
$$ (z,w) \longmapsto e^{tF(z,w)}$$
are holomorphic in $P(0,s\eta/L)$ and the partial sums of $B_n(z,w)$ are uniformly bounded, an application of Montel's Theorem implies that the functions
$$ (z,w) \longmapsto B_n(z,w)$$
are holomorphic in $P(0,s\eta/L).$ Via another application of Montel's Theorem, we can extract a sequence of functions $B_{n_k}$ converging uniformly to a holomorphic function $B$ in $\bar P\left(0,\frac{s\eta}{ML}\right)$ for any $M >1$. Thus, Theorem \ref{thm:Ftrhot} \eqref{eq:thm1conv} and \eqref{eq:Bn=bn}  imply that
\begin{equation}
\label{eq:brhot} 
B = \rho_t \mbox{ on }B\left(0,\frac{s\eta}{ML}\right) \cap X_v.
\end{equation}
Moreover, Theorem \ref{thm:Ftrhot} \eqref{eq:thm1conv} and \eqref{eq:Bnbound} imply that
\begin{equation}
\label{eq:Bbound}
    |B(z,w)| \leq e^{tC_1(r,L)}\rho_t(0) \mbox{ for all }(z,w) \in \bar P\left(0,\frac{s\eta}{ML}\right).
\end{equation}
By the Cauchy Estimates, if $\alpha$ is any multiindex,
\begin{align*}
    |D^\alpha \rho_t(0)| &\overset{\eqref{eq:brhot}}{=} |D^\alpha B(0)| \leq \frac{\alpha!}{\left( \frac{s\eta}{ML}\right)^{|\alpha|}} \max_{(z,w) \in \partial P\left(0,\frac{s\eta}{ML}\right)} |B(z,w)| \\
    &\overset{\eqref{eq:Bbound}}{\leq} \frac{\alpha!}{(s\eta)^{|\alpha|}} (ML)^{|\alpha|}e^{tC_1(r,L)} \rho_t(0) = \frac{\alpha!}{(s\eta)^{|\alpha|}} (ML)^{|\alpha|}e^{tC_r\left(\frac{L}{L-2}\right)^2} \rho_t(0) .
\end{align*}
The proof of \eqref{eq:dern=2} is complete.

We will now prove \eqref{realanal}. First observe that using \eqref{eq:byrhot} and \eqref{eq:brhot} we can deduce that for every $x \in X$ there exists an analytic function $R_x: B_{\C^n} (x, 4^{-1} \eta) \ra \C$ such that
$$R_x|_{X \cap B_{\C^n} (x, 4^{-1} \eta)}=\rho_t.$$
We now set
$$\tilde{\eta}=\min_{v \in V} \dist (S_v, \partial W_v).$$
Using Proposition \ref{transunifbd2}, Theorem \ref{thm:Gtrhot} and arguing exactly as in the proofs of \eqref{eq:dern>2} and \eqref{eq:dern=2} we can deduce that 
for every $x \in S$ there exists an analytic function $\tilde{R}_x: B_{\C^n} (x, 4^{-1} \tilde{\eta}) \ra \C$ such that
$$\tilde{R}_x|_{X \cap B_{\C^n} (x, 4^{-1} \tilde{\eta})}=\tilde{\rho}_t.$$
Clearly, $\tilde{\rho}_t$ is real analytic on  $\Int (S)$ and \eqref{realanal} follows after we recall Theorem \ref{thm:Gtrhot} \eqref{eq:thm1convext}. The proof is complete.
\end{proof}
We conclude this section some remarks.
\begin{remark} Using Proposition \ref{transunifbd2}, Theorem \ref{thm:Gtrhot} and replicating the proofs of \eqref{eq:dern>2} and \eqref{eq:dern=2} we obtain derivative bounds for the extensions $\tilde{\rho}_t$ of the eigenfunctions $\rho_t$:
\begin{enumerate}
\item \label{eq:dern>2ex} If $\fs$ consists of M\"obius maps then: 
\begin{equation}\label{eq: derivative Drhoex}
        |D^\alpha \tilde{\rho}_t (x)| \leq \alpha !  \left( \frac{ n^{1/2}}{u\,\tilde{\eta}}\right)^{|\alpha|} c(s)^t \tilde{\rho}_t(x), \quad \forall x \in S,
    \end{equation}
    where  $0<u<s<\sqrt{2}-1$ and $c(s)=(1-s(2+s))^{-1}$.
\item \label{eq:dern=2ex}  If $n=2$, then 
\begin{equation}
        |D^\alpha \tilde{\rho}_t (x)| \leq \alpha !  \left(\frac{ML}{s\, \tilde{\eta}}\right)^{|\alpha|} \exp \left(t \tilde{C}_r \left( \frac{L}{L-2} \right)^2 \right) \tilde{\rho}_t(x), \quad \forall x \in S, 
    \end{equation}
    where $r,s, M, L$ can be any numbers such that $r \in (0,1), s \in (0,r), M>1, L>2$ and $$\tilde{C}_r=\log \left( \frac{(1+r\tilde{\eta})^3}{(1-r\tilde{\eta})^5}\right).$$
\end{enumerate}
\end{remark}

\begin{remark} It is straightforward to check that Theorem \ref{thm:derivative_bound} \eqref{eq:dern>2} also holds if $\cS$ consists of \textit{extended M\"obius maps} in $\C$. Recall that a map $f: \C_\infty \ra \C_\infty$ is an extended M\"obius map if $f$ or $\bar{f}$ belong to the M\"obius group.
\end{remark}

\begin{remark} Although the constants in \eqref{eq: derivative Drho} and \eqref{eq: derivative Drhoanaly} are easily computable, in practice they can be quite large. It might be possible to obtain derivative bounds with better constants following a less universal approach and leveraging more the specifics of each system. In particular, Falk and Nussbaum in \cite{FalkRS_NussbaumRD_2016b}, obtained much better constants for second order derivative bounds in the case of complex continued fractions systems. It is interesting to investigate if the arguments in \cite{FalkRS_NussbaumRD_2016b} can be generalized to general M\"obius maps in $\R^n, n \geq 2$ .
\end{remark}

%%%%%%%%%%%%%%%%%%%%%%%%%%%%%%%%%%%%%%%%%%%%%%%%%%%%%%%%%%%%%
\section{Numerical method}\label{sec: numerical}

In this section, we describe an algorithm that rigorously computes the Hausdorff dimension of limit sets of maximal GDMSs. The method is based on the Falk-Nussbaum approach of approximating the eigenfunctions of the Perron-Frobenius operator \cite{Falk_Nussbaum_2016}, and consists of the following steps:

\begin{itemize}
\item Discretizing $C(X)$.
\item Approximating the Perron-Frobenius operator.
\item Computing upper and lower bounds for the Hausdorff dimension of the limit set.
\end{itemize}

Before we describe the method, we introduce some notation and supplementary results. 

\subsection{Notation and the Bramble-Hilbert lemma}
Our numerical estimates apply results from finite element methods. Suppose we are working on an open, bounded \textit{domain} $\Omega$ in $\R^n$. Throughout the  paper, we will
use the usual notation for the Lebesgue ($L^p$), Sobolev ($W^{m,p}$) and H\"older ($C^{k,\alpha}$) spaces with the corresponding norms and semi-norms. Thus
if $u \in W^{m,p}(\Omega)$, the corresponding norm is defined by
\[\norm{u}_{W^{m,p}(\Omega)} = \left(\sum_{|\alpha| \leq m}\norm{D^\alpha u}_{L^p(\Omega)}^2\right)^{1/2},\]
and the \textit{semi-norm} by
\[|u|_{W^{m,p}(\Omega)} = \left(\sum_{|\alpha| = m}\norm{D^\alpha u}_{L^p(\Omega)}^2\right)^{1/2}.\]

To state the following version of the Bramble-Hilbert lemma, we recall that a domain $\Omega$ is \textit{star-shaped} with respect to $x_0 \in \Omega $ if the segment 
$$[x_0,x] = \{ x_0t+ x(1-t): t \in [0,1]\} \subset \Omega$$
for all $x \in X$. Let $ \mathcal{P}_m$ be the space of piecewise $m$-degree polynomials on $\Omega$. We will use a version of the Bramble-Hilbert Lemma with a computational constant, found in \cite{compbramble}.

\begin{lem}[Explicit Bramble-Hilbert]\label{lem:brambhilb}
Suppose  $\Omega$ is an open bounded set which is star-shaped with respect to every point in a measurable set of positive measure $B \subseteq \Omega$. Let $ p \geq q >1$, suppose that $j <m,$ and let $d = \diam (\Omega).$ If $f \in W^{p,m}(\Omega),$ then
\begin{equation}
    \inf_{P \in \mathcal{P}_m} |f - P|_{W^{j,q}(\Omega)} \leq C_{BH} \frac{d^{m-j+n/q}}{\lambda (B)^{1/p}} |f|_{W^{m,p}(\Omega)}
\end{equation}
where
$$C_{BH} =\# \left\{\alpha : |\alpha|=j \right \} \cdot \frac{m-j}{n^{1/q}} \cdot \frac{p}{p-1} \omega_{n-1}^{1/q} \left( \sum_{|\beta| = m-j} (\beta!)^{-2}\right)^{1/2}.$$
\end{lem}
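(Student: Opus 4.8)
The plan is to exhibit an explicit polynomial competitor in the infimum, namely the \emph{averaged Taylor polynomial} of $f$, and then to control the resulting remainder by a fractional-integral estimate, in the spirit of Brenner and Scott. Fix the normalised weight $\lambda(B)^{-1}\mathbf 1_B$ and define the degree-$(m-1)$ polynomial
\[
Q^m f(x) = \frac{1}{\lambda(B)}\int_B \sum_{|\gamma|\le m-1}\frac{1}{\gamma!}\,D^\gamma f(y)\,(x-y)^\gamma \, dy,
\]
i.e. the average over $y\in B$ of the order-$m$ Taylor polynomial of $f$ centred at $y$. Integrating out $y$ shows $Q^m f$ is a polynomial in $x$ of degree $\le m-1$, hence $Q^m f\in\mathcal P_m$ is admissible, so it suffices to bound $|f - Q^m f|_{W^{j,q}(\Omega)}$. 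The hypothesis that $\Omega$ is star-shaped with respect to every point of $B$ guarantees that for each $x\in\Omega$ and a.e. $y\in B$ the whole segment $[y,x]$ stays in $\Omega$, which is exactly what is needed for the pointwise Taylor expansion and its integral remainder to be valid almost everywhere.

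First I would record the commutation identity $D^\alpha Q^m f = Q^{m-|\alpha|}(D^\alpha f)$, valid for $|\alpha|\le m-1$, obtained by differentiating termwise and reindexing. Applying it with $|\alpha| = j < m$ reduces the problem to estimating $D^\alpha f - Q^{m-j}(D^\alpha f)$ in $L^q(\Omega)$ for each of the $\#\{\alpha:|\alpha|=j\}$ multi-indices of length $j$. For a fixed such $\alpha$, the integral form of the Taylor remainder of order $m-j$ applied to $D^\alpha f$ gives a representation
\[
\big(D^\alpha f - Q^{m-j}(D^\alpha f)\big)(x) = \sum_{|\beta|=m-j}\frac{m-j}{\beta!}\,\frac{1}{\lambda(B)}\int_B \int_0^1 (1-s)^{m-j-1}(x-y)^\beta\,D^{\alpha+\beta}f\big(y+s(x-y)\big)\,ds\,dy,
\]
where the derivatives $D^{\alpha+\beta}f$ are of total order $j+(m-j)=m$, matching $|f|_{W^{m,p}}$ on the right-hand side. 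After the change of variables $z = y+s(x-y)$ this collapses to a kernel supported in $\Omega\subset\overline{B}(x,d)$ whose $x$-singularity is that of a Riesz potential of order $m-j$, namely $|x-z|^{-(n-(m-j))}$; this is integrable precisely because $j<m$.

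The heart of the argument, and the main obstacle, is the $L^q(\Omega)$ estimate of this operator with a \emph{sharp and fully explicit} constant rather than the usual $\lesssim$. I would first apply Hölder's inequality in the $y$-average against $\mathbf 1_B\in L^{p'}(\Omega)$, with $p'=p/(p-1)$; since $\|\lambda(B)^{-1}\mathbf 1_B\|_{L^{p'}} = \lambda(B)^{-1/p}$, this is the precise origin of the $\lambda(B)^{-1/p}$ factor. It then remains to bound, in $L^q$, the convolution against the Riesz kernel: passing to polar coordinates introduces $\omega_{n-1}$, radial integration of the order-$(m-j)$ kernel out to $r=d$ produces the power $d^{m-j}$ together with the $\frac{m-j}{n^{1/q}}$ and $p/(p-1)$ factors from the exponents, while the outer $L^q$-normalisation over $\Omega\subset\overline{B}(x,d)$ contributes $d^{n/q}$ and $\omega_{n-1}^{1/q}$. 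Summing over $\beta$ by Cauchy--Schwarz replaces $\sum_{|\beta|=m-j}(\beta!)^{-1}\|D^{\alpha+\beta}f\|_{L^p}$ with $\big(\sum_{|\beta|=m-j}(\beta!)^{-2}\big)^{1/2}|f|_{W^{m,p}}$, and aggregating the seminorm via the elementary bound $(\sum_\alpha a_\alpha^2)^{1/2}\le\sum_\alpha a_\alpha$ over the $\#\{\alpha:|\alpha|=j\}$ terms yields the leading first-power factor. The delicate part is matching \emph{every} geometric, analytic and combinatorial constant to the stated form of $C_{BH}$, for which I would follow the detailed bookkeeping in \cite{compbramble}.
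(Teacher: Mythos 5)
The paper does not actually prove this lemma: it is quoted from \cite{compbramble} with no internal argument, so there is no in-paper proof to compare against. Your outline follows the standard averaged-Taylor-polynomial route (Sobolev representation formula, the commutation $D^\alpha Q^m f = Q^{m-|\alpha|}(D^\alpha f)$, and a Riesz-potential bound on the order-$(m-j)$ remainder kernel), and it correctly identifies the provenance of each factor in $C_{BH}$: the $\lambda(B)^{-1/p}$ from H\"older against the normalised indicator of $B$, the $\omega_{n-1}^{1/q}$ and $d^{m-j+n/q}$ from polar coordinates over $\Omega\subset\overline{B}(x,d)$, the $\bigl(\sum_{|\beta|=m-j}(\beta!)^{-2}\bigr)^{1/2}$ from Cauchy--Schwarz over $\beta$, and the cardinality factor from aggregating the $\ell^2$ seminorm over $|\alpha|=j$. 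What your write-up does not deliver, however, is the lemma's entire point: the verification that the constants produced by this chain of estimates combine to \emph{exactly} the stated $C_{BH}$. You explicitly defer that bookkeeping to \cite{compbramble}, which is precisely what the paper itself does. So your proposal is a correct and faithful reconstruction of the structure of the cited argument, but as a self-contained proof of the \emph{explicit} constant it has a gap at the decisive step; relative to the paper's treatment it is neither weaker nor stronger.
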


 \subsection{\texorpdfstring{Discretizing $C(X)$}{}}
 
To discretize $C(X)$ we use a finite element approach. Take $\delta>0$ so that $X(\delta) \subset W$, where
$$
X(\delta) = \left\{  x \in \R^n : d( x,X) < \delta \right\} .
$$
For $h<\delta$ choose a subdomain $X^h\subset\mathbb{R}^n$ such that $X\subset X^h\subset X(\delta)$. We partition (triangulate) $X^h$ into simplices, i.e. $X^h=\cup_{\tau}\bar{\tau}$. For simplicity we choose a conformal mesh, meaning that two neighboring simplices can intersect only by lower dimensional simplices (faces, edges, or nodes). 
An example of  2-dimensional conformal triangulation is shown in Figure \ref{fig:circ-mesh}.

\begin{figure}[!htb]  
    \begin{minipage}[t]{0.48\textwidth}
        \centering
        \includegraphics[width=.8\textwidth]{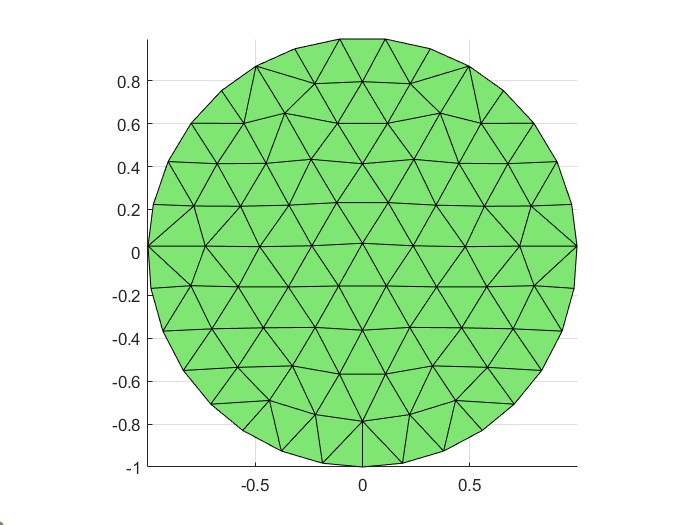}
        \caption{An example of a triangular mesh of $\D$.}
        \label{fig:circ-mesh}
    \end{minipage}
  \hfill
  \begin{minipage}[t]{0.48\textwidth}
    \centering
        \includegraphics[width=.95\textwidth]{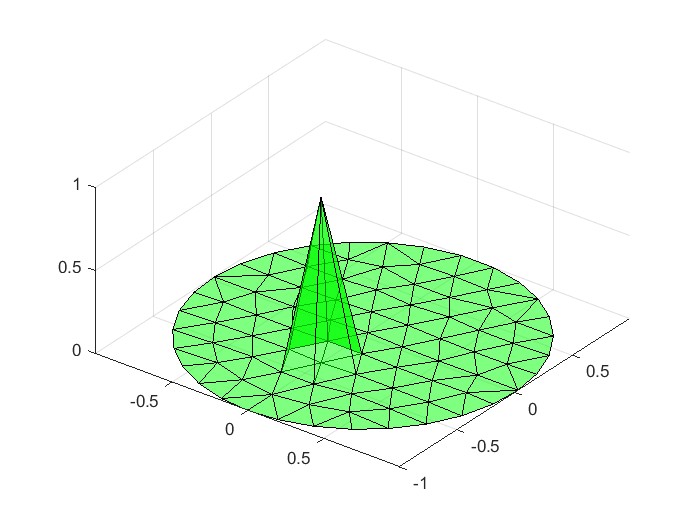}
        \caption{A nodal basis function for $\mathbb{P}_1(X^h)$.}
        \label{fig:nodalbasis}
  \end{minipage}
\end{figure}

Let $h_\tau=\operatorname{diam}(\tau)$ and
define
$h= \max_{\tau}h_\tau.$
 On an element $\tau$ of the mesh, we define $\mathbb{P}_1(\tau)$ the space of linear functions on $\tau$. Furthermore, let $S_h$ be the space of piecewise linear functions on  $X^h$
$$
S_h=\{v\in C^0(X)\ :\  v\mid_\tau\in \mathbb{P}_1(\tau) \}.
$$
By the Bramble-Hilbert Lemma \ref{lem:brambhilb}, for any $v\in W^{2,\infty}$, 
\begin{equation}\label{eq: bramble-hilbert lemma}
\inf_{\chi\in S_h}\|v-\chi\|_{L^\infty}\le C_{BH}h^2| v|_{W^{2,\infty}},
\end{equation}
 for some constant $C_{BH}$ independent of $h$, which can be explicitly estimated from the Lemma  \ref{lem:brambhilb}.

\begin{rem}
Instead of triangulation, we could choose any other partition of $X^h$, for example rectangular elements and use bilinear functions as was done in \cite{FalkRS_NussbaumRD_2016b}, which is a valid alternative.  However, in our opinion the triangulation provides more structure that makes the implementation faster and easier.   
\end{rem}

To use the finite element space $S_h$ for computations, we need some basis functions. Since any element from $S_h$ is uniquely defined by its  values at the nodes of the triangulation $\{x_j\}_{j=1}^N$,  we choose basis functions  $\{\phi_i(x)\}_{i=1}^N$ satisfying
$$
\phi_i(x_j)=\delta_{ij}=
\begin{cases}
1\quad i=j\\
0\quad i\neq j \end{cases}
i,j=1,2,\dots, N,
$$
and define a nodal (Lagrange) interpolation operator $\mathcal{I}_h:C^0(X)\to S_h$  by
$$
\mathcal{I}_hv(x)= \sum_{j=1}^N v(x_j)\phi_j(x).
$$  
Since the  nodal interpolant $\mathcal{I}_h$ is invariant on  $S_h$, i.e.
 $\mathcal{I}_h q =q$ for any $q \in S_h$, and  bounded  from $L^\infty\to L^\infty$ with a constant 1, by the triangle inequality, for an arbitrary $q \in S_h$,  we have
\begin{align*}
    \|v-\mathcal{I}_hv\|_{L^\infty}  & \leq \|v-q\|_{L^\infty}+\|q-\mathcal{I}_hv\|_{L^\infty} \\ & 
    \leq \|v-q\|_{L^\infty}+\|\mathcal{I}_h(q-v)\|_{L^\infty}\\
    &\leq 2 \|v-q\|_{L^\infty}.
\end{align*}
Thus, we immediately obtain the following corollary.
\begin{cor}
For any $v\in W^{2,\infty}(\Omega)$,
\[\|v-\mathcal{I}_hv\|_{L^\infty(\Omega)} \leq 2C_{BH}h^2 |v|_{W^{2,\infty}(\Omega)},
\]
where $C_{BH}$ is the same constant as in \eqref{eq: bramble-hilbert lemma}.
\end{cor}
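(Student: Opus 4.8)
The plan is to obtain the estimate by combining two ingredients: the \emph{stability} (quasi-optimality) of the nodal interpolation operator $\mathcal{I}_h$ in the $L^\infty$ norm, and the approximation estimate \eqref{eq: bramble-hilbert lemma} furnished by the Bramble--Hilbert Lemma. The role of the interpolant here is only to reduce the interpolation error $\|v - \mathcal{I}_h v\|_{L^\infty}$ to a best-approximation error over $S_h$; the quantitative $h^2$ decay and the explicit constant $C_{BH}$ are then inherited verbatim from \eqref{eq: bramble-hilbert lemma}.

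First I would record the two structural properties of $\mathcal{I}_h$ that drive the argument. The operator is \emph{idempotent} on $S_h$, that is $\mathcal{I}_h q = q$ for every $q \in S_h$, since a piecewise linear function is uniquely determined by its nodal values and reinterpolating it reproduces it. Second, $\mathcal{I}_h$ is \emph{$L^\infty$-stable with operator norm $1$}. This is where the specific choice of the piecewise linear nodal basis enters: the hat functions satisfy $\phi_j \geq 0$ together with the partition-of-unity identity $\sum_{j=1}^N \phi_j \equiv 1$, so for every $x$ one has $|\mathcal{I}_h v(x)| = |\sum_{j=1}^N v(x_j)\phi_j(x)| \leq (\max_j |v(x_j)|)\sum_{j=1}^N \phi_j(x) = \max_j |v(x_j)| \leq \|v\|_{L^\infty}$, and the norm equals $1$ because $\mathcal{I}_h \one = \one$.

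With these in hand the estimate is immediate. For any $q \in S_h$, idempotency gives $v - \mathcal{I}_h v = (v-q) + \mathcal{I}_h(q-v)$, whence the triangle inequality and $\|\mathcal{I}_h\|_{L^\infty \to L^\infty} = 1$ yield $\|v - \mathcal{I}_h v\|_{L^\infty} \leq 2\|v-q\|_{L^\infty}$. Since $q \in S_h$ is arbitrary, passing to the infimum and invoking \eqref{eq: bramble-hilbert lemma} gives $\|v - \mathcal{I}_h v\|_{L^\infty} \leq 2 \inf_{\chi \in S_h}\|v - \chi\|_{L^\infty} \leq 2 C_{BH} h^2 |v|_{W^{2,\infty}}$, which is the claim. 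The only genuinely nontrivial point --- and hence the step I would treat most carefully --- is the stability constant: for a general finite element interpolant the Lebesgue constant exceeds $1$, and it is precisely the nonnegativity and partition-of-unity of the $\mathbb{P}_1$ nodal basis that pins it down to exactly $1$, thereby producing the clean factor $2$ rather than a mesh- or dimension-dependent quantity.
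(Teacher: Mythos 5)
Your proof is correct and follows essentially the same route as the paper: idempotency of $\mathcal{I}_h$ on $S_h$ plus $L^\infty$-stability with constant $1$, the triangle inequality to reduce to the best approximation error, and then the Bramble--Hilbert estimate \eqref{eq: bramble-hilbert lemma}. The only difference is that you spell out why the stability constant is exactly $1$ (nonnegativity and partition of unity of the nodal basis), which the paper asserts without proof.
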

Provided we have the following continuity and derivative estimates for $\rho_t$
%\begin{subequations}
\begin{equation}\label{eq: Lipschitz bounds}
|\rho_t(x)-\rho_t(y)|\le C_1|x-y|\quad x,y\in X^h
\end{equation}
\begin{equation}\label{eq: derivative bounds}
|D^\alpha\rho_t(x)|\le C_2|\rho_t(x)|\quad x\in X^h,\quad |\alpha|=2,
\end{equation}
for some computable constants $C_1$ and $C_2$, 
 for any $x\in\tau$,  we obtain
 $$
0\le  |\rho_t(x)-\mathcal{I}_h\rho_t(x)|\le 2C_{BH}h_\tau^2|\rho_t|_{W^{2,\infty}(\tau)}\le 2C_{BH}(C_1h_\tau+1)C_2h_\tau^2\rho_t(x).
 $$
Thus we have
\begin{equation}\label{eq: 2sided estimates}
(1-\err_{\tau})\mathcal{I}_h\rho_t(x)\le \rho_t(x)\le (1+\err_{\tau})\mathcal{I}_h\rho_t(x)\quad \forall x\in \tau,\ \forall \tau,
\end{equation}
where 
$$
\err_{\tau}=2C_{BH}(C_1h_\tau+1)C_2h_\tau^2.
$$
Thus, $\mathcal{I}_h\rho_t$ provides upper and lower pointwise bounds for $\rho_t$ and these bounds tend to 1 quadratically as $h\to 0$. From now on we assume that $h$ is sufficiently small, so that 
$$
\err:=\max_{\tau}\err_{\tau}<1.
$$

\subsection{\texorpdfstring{Approximating the Perron-Frobenius operator when the alphabet $E$ is finite.}{}}\label{sec: Approx perron}

Next we want to approximate the  Perron-Frobenius operator 
$F_t:C(X) \ra C(X)$ which was introduced in \eqref{ft}. Recall that
$$ F_t(g)(x) = \sum_{e \in E_A} \norm{D\phi_e(x)}^t g(\phi_e(x)) \chi_{X_{t(e)}}(x), \quad g \in C(X).
$$
Using \eqref{eq: 2sided estimates}, we have
\begin{equation}
\begin{split}\label{eq: 2sided estimates sum}
(1-\err)&\sum_{e \in E_A}\|D\phi_e(x)\|^t\mathcal{I}_h\rho_t(\phi_e(x))\chi_{X_{t(e)}}(x)\le F_t\rho_t(x) \\
&\le  (1+\err)\sum_{e \in E_A} \|D\phi_e(x)\|^t \mathcal{I}_h\rho_t(\phi_e(x))\chi_{X_{t(e)}}(x) \quad \forall x\in X^h.
\end{split}
\end{equation}
Let $\alpha \in \mathbb{R}^N$ be a vector with entries
$$
\alpha_j=\rho_t(x_j)=\mathcal{I}_h\rho_t(x_j)\quad j=1,2,\dots, N,
$$
and define two matrices $A_t, B_t\in \mathbb{R}^{N\times N}$ such that
$$
\begin{aligned}
(A_t{\alpha})_j&:=(1-\err)\sum_{e \in E_A}\|D\phi_e(x_j)\|^t\mathcal{I}_h\rho_t(\phi_e(x_j))\chi_{X_{t(e)}}(x_j)\\
(B_t{\alpha})_j&:=(1+\err)\sum_{e \in E_A}\|D\phi_e(x_j)\|^t\mathcal{I}_h\rho_t(\phi_e(x_j))\chi_{X_{t(e)}}(x_j).
\end{aligned}
$$
One of the technical difficulties of assembling the above matrices is to locate an element $\tau$ that contains $\phi_e(x_j)$. At this point, the structure of the triangulation comes very handy as one can use a barycentric point location, which makes the assembly rather efficient.  For example if for the node $x_j$, the image 
$\phi_e(x_j)\in \tau_i$ for some $1\le i\le N$, then we have $\phi_e(x_j)=\lambda_1x_1^i+\cdots+\lambda_{n+1}x_{n+1}^i$, where $x_1^i,\dots,x_{n+1}^i$ are the vertices of the simplex $\tau_i$ and $\lambda_1,\dots, \lambda_{n+1}\geq 0$, $\lambda_1+\cdots+\lambda_{n+1}=1 $ are the barycentric coordinates  of the point $\phi_e(x_j)$. Thus, we obtain the contribution to the entries of $j$-th columns of the matrices $A$ and $B$ the rows corresponding to the global indices of the nodes $x_1^i,\dots,x_{n+1}^i$ weighted by the barycentric coordinates $\lambda_1,\dots, \lambda_{n+1}$. This step can be vectorized for all $e\in E$, making the assembly very efficient.

\subsection{Computing upper and lower bounds of the Hausdorff dimension}

The matrices $A_t, B_t$ consist of non-negative entries and we can use the following key result for such matrices \cite[Lemma 3.2]{FalkRS_NussbaumRD_2016b}.
\begin{lem}
    \label{lemma: monotonicity}
Let $M$ be an $N\times N$ matrix with non-negative entries  and $w$ an $N$-vector with strictly positive components. Then,
$$
\begin{aligned}
\text{if}\quad (Mw)_j &\geq \lambda w_j, \quad j=1,\dots,N,\ \text{then}\ r(M)\geq \lambda,\\
\text{if}\quad (Mw)_j &\leq \lambda w_j, \quad j=1,\dots,N,\ \text{then}\ r(M)\le \lambda,
\end{aligned}
$$
where $r(M)$ denotes the spectral radius of $M$.
\end{lem}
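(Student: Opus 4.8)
The plan is to reduce the two-sided estimate to elementary facts about the $\infty$-operator norm of a non-negative matrix, after a diagonal change of coordinates that uses the positive test vector $w$ to convert both hypotheses into statements about row sums. Write $D=\operatorname{diag}(w_1,\dots,w_N)$, which is invertible since every $w_j>0$, and set $\tilde M=D^{-1}MD$. Since $\tilde M$ is similar to $M$ we have $r(\tilde M)=r(M)$, and $\tilde M$ is again non-negative because conjugation by a positive diagonal matrix only rescales entries: $\tilde M_{ij}=w_i^{-1}M_{ij}w_j\ge 0$. The key computation is that the $i$-th row sum of $\tilde M$ equals $\sum_j w_i^{-1}M_{ij}w_j=w_i^{-1}(Mw)_i$. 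Thus the hypothesis $(Mw)_i\le\lambda w_i$ for all $i$ is exactly the statement that every row sum of $\tilde M$ is at most $\lambda$, while $(Mw)_i\ge\lambda w_i$ says every row sum is at least $\lambda$.

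For the upper bound I would invoke the standard inequality $r(A)\le\|A\|$, valid for any submultiplicative matrix norm, applied to the $\infty$-operator norm $\|A\|_\infty=\max_i\sum_j|A_{ij}|$, which for a non-negative matrix is precisely the maximum row sum. Hence if every row sum of $\tilde M$ is at most $\lambda$, then $r(M)=r(\tilde M)\le\|\tilde M\|_\infty\le\lambda$, which gives the second assertion immediately.

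The lower bound is the more delicate half, since the spectral radius is not in general controlled from below by an operator norm, and one cannot appeal directly to a Perron--Frobenius eigenvector because $M$ need not be irreducible, so its Perron eigenvector may fail to be strictly positive. The remedy is to iterate. We may assume $\lambda\ge 0$, since $r(M)\ge 0$ for a non-negative $M$ makes the claim trivial otherwise. Let $\one=(1,\dots,1)^{T}\in\R^N$ denote the all-ones vector. The row-sum hypothesis reads $\tilde M\one\ge\lambda\one$ componentwise, and since $\tilde M\ge 0$ preserves the cone of non-negative vectors, an induction yields $\tilde M^{k}\one\ge\lambda^{k}\one$ for every $k$: indeed $\tilde M^{k+1}\one=\tilde M(\tilde M^{k}\one)\ge\tilde M(\lambda^{k}\one)=\lambda^{k}\tilde M\one\ge\lambda^{k+1}\one$. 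Taking $\infty$-norms gives $\|\tilde M^{k}\|_\infty\ge\|\tilde M^{k}\one\|_\infty\ge\lambda^{k}$, and Gelfand's formula $r(\tilde M)=\lim_{k\to\infty}\|\tilde M^{k}\|_\infty^{1/k}$ then forces $r(M)=r(\tilde M)\ge\lambda$.

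The only genuine obstacle is this last step: securing the lower bound with no irreducibility assumption on $M$. The iteration-plus-Gelfand argument sketched above circumvents this cleanly, so that non-negativity of $M$ together with positivity of $w$ is all that is required. Alternatively one could cite the Collatz--Wielandt characterization of the spectral radius of a non-negative matrix, but the self-contained iteration is preferable here since it avoids invoking the full Perron--Frobenius machinery.
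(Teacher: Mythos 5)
Your proof is correct. Note that the paper itself offers no proof of this lemma at all: it is quoted verbatim from Falk and Nussbaum \cite[Lemma 3.2]{FalkRS_NussbaumRD_2016b}, so there is no in-paper argument to compare against. Your route --- conjugating by $D=\operatorname{diag}(w)$ to turn the hypotheses into row-sum bounds, getting the upper bound from $r(\tilde M)\le\|\tilde M\|_\infty$, and getting the lower bound by iterating $\tilde M^{k}\mathbf{1}\ge\lambda^{k}\mathbf{1}$ and applying Gelfand's formula --- is sound in every step, and you correctly identify and handle the one real subtlety, namely that without irreducibility one cannot lean on a strictly positive Perron eigenvector for the lower bound. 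The usual citation here would be the Collatz--Wielandt characterization $r(M)=\sup_{v>0}\min_i (Mv)_i/v_i$ (which gives the lower bound in one line) together with $r(M)=\inf_{v>0}\max_i(Mv)_i/v_i$ for the upper bound; your iteration argument proves exactly the half of Collatz--Wielandt that is needed, in a self-contained way, at the cost of a few extra lines. The only cosmetic quibble is the double use of $\|\cdot\|_\infty$ for both the vector sup-norm and the induced operator norm in the chain $\|\tilde M^{k}\|_\infty\ge\|\tilde M^{k}\mathbf{1}\|_\infty\ge\lambda^{k}$; the inequality is right because $\|\mathbf{1}\|_\infty=1$, but it is worth flagging which norm is meant at each step.
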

Since
$$
(F_t\rho_t)(x_j)=r(F_t)\rho_t(x_j)\quad j=1,\dots, N,
$$ 
where $r(F_t)=\lambda_t=e^{P(t)}$ denotes the spectral radius of $F_t$,  for all $j=1,\dots, N$,
\begin{equation*}
\begin{split}
(A_{t}\alpha_t)_j \le F_t \rho_t (x_j)=\lambda_t \rho_t (x_j)= r(F_t) (\alpha_t)_j,
\end{split}
\end{equation*}
and
\begin{equation*}
\begin{split}
(B_{t}\alpha_t)_j {\geq} F_t \rho_t (x_j)=\lambda_t \rho_t (x_j)= r(F_t) (\alpha_t)_j.
\end{split}
\end{equation*}
Therefore Lemma \ref{lemma: monotonicity} implies that
$$
r(A_{t})\le r(F_t)=\lambda_t\le r(B_{t}).
$$
Let ${t}^*=\dim_{\mathcal{H}}(J_S)$ and recall by Bowen's formula from Section \ref{sec:prelim} that $r(F_{t^*})=\lambda_{t^*}=1$. Thus, our goal is to compute tight upper and lower bounds $\underline{t},\overline{t}$ such that $t^*\in (\underline{t},\overline{t})$.
Since the map $t\to \lambda_t$ is strictly decreasing, if we find $\underline{t}$ such that 
$r(A_{{\underline{t}}})> 1$, then $r(F_{t^*})=1<r(A_{{\underline{t}}})\le r(F_{\underline{t}})$ and as a result $t^*> \underline{t}$. Similarly, if we find $\overline{t}$ such that 
$r(B_{{\overline{t}}})< 1$, then $r(F_{\overline{t}})\le r(B_{{\overline{t}}})< 1=r(F_{t^*})$ and as a result $t^*<\overline{t}$. In conclusion, we would have
 $\underline{t}< t^*< \overline{t}$, which is a rigorous effective estimate for the Hausdorff dimension of the set $J_S$. 
 
Thus, given matrices $A_{t}$ and $B_{t}$ the problem essentially reduces to nonlinear problem of computing a parameter $t$ that corresponds to a leading eigenvalue $1$. 
Using the logarithm, the above nonlinear problem is equivalent to root finding problem. There are many good choices can be used. In our computations, we used a variation of  a secant method, since good initial guesses for such problem are available.

\subsection{Case of infinite alphabet}

In the case of the infinite alphabet, we consider the truncated finite  alphabet $\tilde{E}\subset E$ and initially define the matrices on the truncated alphabet as,
$$
\begin{aligned}
(\tilde{A}_t{\alpha})_j&=(1-\err)\sum_{e \in \tilde{E}_A}\|D\phi_e(x_j)\|^t\mathcal{I}_h\rho_t(\phi_e(x_j)) \chi_{X_{t(e)}}(x_j)\\
(\tilde{B}_t{\alpha})_j&=(1+\err)\sum_{e \in \tilde{E}_A}\|D\phi_e(x_j)\|^t\mathcal{I}_h\rho_t(\phi_e(x_j)) \chi_{X_{t(e)}}(x_j).
\end{aligned}
$$
For estimating the lower bound $\underline{t}$, we can use the matrix $\tilde{A}_t$, however for estimating the upper bound $\overline{t}$, we need to modify the matrix $\tilde{B}_t$ to account for the tail 
$$
(1+\err)\sum_{e \in E_A\backslash\tilde{E}_A}\|D\phi_e(x_j)\|^t\mathcal{I}_h\rho_t(\phi_e(x_j)) \chi_{X_{t(e)}}(x_j).
$$
Provided that 
$$
\sum_{e \in E_A\backslash\tilde{E}_A}\|D\phi_e(x)\|^t\mathcal{I}_h\rho_t(\phi_e(x)) \chi_{X_{t(e)}}(x)
$$
converges uniformly in $x$, in view of the continuity estimate \eqref{eq: Lipschitz bounds}, we have that for any $1\le j\le N$ 
$$
(1+\err)\sum_{e \in E_A\backslash\tilde{E}_A}\|D\phi_e(x_j)\|^t\mathcal{I}_h\rho_t(\phi_e(x_j)) \chi_{X_{t(e)}}(x_j)\le C_0\rho(x_1).
$$
Thus, for each $j$ column of $\tilde{B}_t$ we only need to modify the first row of $\tilde{B}_t$. In the above estimate, the choice of $x_1$ is  arbitrary,  we could select any other node (or nodes) as well.  
The exact estimate of the constant $C_0$, depends of course on
a concrete problem and the size of $\tilde{E}$. In many examples, we can chose the size of the truncated so large that the modified matrix allows us to obtain a sharp  upper bound $\overline{t}$.
\begin{rem}
In the case of infinite alphabet, We have two sources of the error, one is due to discretization of the domain $X$ and the other is due to truncation of the alphabet $E$. The sizes of the matrices $\tilde{A}_t$ and $\tilde{B}_t$ only depend on the discretization parameter $h$ and not on the truncated alphabet $\tilde{E}$. The size of the truncation alphabet  affects of course the entries of the matrices $\tilde{A}_t$ and $\tilde{B}_t$ and the time it takes to assemble  them. However, as we already mentioned in the section \ref{sec: Approx perron},  this step can be made very efficient and in all our examples given below, we are able to take $\tilde{E}$ so large (corresponding to $C_0$ be very small) that the dominating error is due to the discretization parameter $h$ only.
\end{rem}

%%%%%%%%%%%%%%%%%%%%%%%%%%%%%%%%%%%%%%%%%%%%%%%%%%%%%%%%%%%%%
\section{Applications}
\label{sec:app}
In this section, we illustrate the method for various CGDMSs. In particular, we verify that these systems are indeed CGDMSs and  highlight some properties of the general families that these systems belong to. In Section \ref{sec:app} we will describe the specific implementation points of our numerical method for these examples.

\subsection{\texorpdfstring{$n$-dimensional continued fractions}{}}
In this section we review $n$-dimensional continued fractions and some of their dynamical properties. We find their $\theta$-number and prove they are a CIFS.

\begin{defn}[$n$-dimensional Continued Fractions IFS]
Let $v_{1/2} =(1/2,0,...,0)$ and let $|\cdot |$ denote the Euclidean norm. The $n$-dimensional continued fraction IFS, denoted $\cfe$, consists of the maps
\begin{equation}
   \left\{\phi_e:X \rightarrow X | \ e \in \N \times \Z^{n-1}, \ \phi_e( x) = \frac{x+ e}{| x+e|^2} \right\},
\end{equation}
where 
$$X = \left\{ x \in \R^n : | x -v_{1/2}| \leq \frac{1}{2}\right\}.$$
\end{defn}

To verify that $\cfe$ is a CIFS, first note that $X = \overline{\text{Int}(X)}$. We are left with three properties to check. First, the the system has to satisfy the OSC. Second, each $\phi_e$ must map $X$ to itself to be an IFS. Finally, there must exist an open set $W \supset X$ furnishing a conformal extension for each $e \in E$. 

\begin{lem}
For any $ e_1,e_2 \in E$ with $e_1 \neq e_2$,
$$ \phi_{e_1}(\Int(X)) \cap \phi_{e_2}(\Int(X)) = \emptyset.$$
%where $\phi_{e_1}$ and $\phi_{e_2}$ are maps in $\cfe$.
\end{lem}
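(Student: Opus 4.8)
The plan is to exploit the fact that each map factors as $\phi_e = \iota \circ \tau_e$, where $\tau_e(x) = x+e$ is translation by $e$ and $\iota(y) = y/|y|^2$ is the inversion. First I would record that the translate $\tau_e(X) = X + e$ is the closed ball of radius $1/2$ centered at $v_{1/2} + e$, whose first coordinate equals $1/2 + e_1 \geq 3/2$ because $e_1 \in \N$; hence the distance from the origin to this center strictly exceeds the radius $1/2$, so $0 \notin X + e$. Consequently $\iota$ is defined and smooth on a neighborhood of each translate $X+e$.

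The key observation is that $\iota$ is an involution, hence a bijection, of $\R^n \setminus \{0\}$. For any injective map $f$ one has $f(A) \cap f(B) = f(A \cap B)$, so, using $\phi_{e_i}(\Int(X)) = \iota(\Int(X)+e_i)$,
\begin{equation*}
\phi_{e_1}(\Int(X)) \cap \phi_{e_2}(\Int(X)) = \iota\big((\Int(X)+e_1)\cap(\Int(X)+e_2)\big).
\end{equation*}
Thus the whole problem reduces to showing that the two translated open balls $\Int(X)+e_1$ and $\Int(X)+e_2$ are disjoint.

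This last step is elementary: $\Int(X)+e_i$ is the open ball of radius $1/2$ centered at $v_{1/2}+e_i$, and the distance between the two centers is exactly $|e_1-e_2|$. Since $e_1, e_2 \in \N \times \Z^{n-1} \subset \Z^n$ are distinct, we have $|e_1 - e_2| \geq 1$, which equals the sum of the two radii; hence the open balls are disjoint (a common point would, by the triangle inequality, force the centers to lie at distance strictly less than $1$), and the displayed intersection is empty.

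I do not expect a serious obstacle here; the only points requiring a moment's care are verifying that $0$ lies outside every translate $X+e$ (so that $\iota$ is genuinely injective on the relevant region) and invoking injectivity to pass the intersection through $\iota$. Everything else is a direct distance computation between lattice-translated balls.
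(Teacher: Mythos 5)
Your proof is correct and follows essentially the same route as the paper: factor $\phi_e = \iota \circ \tau_e$, observe that the translated open balls are disjoint because $|e_1 - e_2| \geq 1 = \diam(X)$, and pass the empty intersection through the injective inversion $\iota$. Your extra check that $0 \notin X + e$ (so $\iota$ is genuinely defined and injective on the relevant region) is a small point of care the paper leaves implicit, but it does not change the argument.
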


\begin{proof}
Each $\phi_e $ in $\cfe$ is the composition of two distinct maps --- a translation $\tau_e$ followed by an inversion $\iota$ about the unit sphere:
\begin{enumerate}
    \item $\tau_e : x \mapsto x+e,$ and
    \item $\iota:  x \mapsto  x/|x|^2.$
\end{enumerate}
Since $|e_1-e_2| \geq 1 = \diam(X)$, we see that for distinct $e_1,e_2 \in E$ 
$$ \tau_{e_1}(\text{Int}(X)) \cap \tau_{e_2}(\text{Int}(X)) = \emptyset.$$
Applying the injectivity of an inversion, 
$$\iota \circ \tau_{e_1}(X) \cap \iota \circ \tau_{e_2}(X) = \emptyset,$$ so the open set condition is satisfied.
\end{proof}

We now provide an analytic proof that each $\phi_e$ maps $X$ to itself, proving that $\cfe$ is an IFS.

\begin{lem}\label{cfx2x}
For each $e \in E$, $\phi_e:X \rightarrow X$.
\end{lem}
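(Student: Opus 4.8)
The plan is to prove the containment $\phi_e(X) \subseteq X$ by a direct computation, bypassing the general theory of how inversions act on spheres. Writing $y = x + e = \tau_e(x)$ so that $\phi_e(x) = \iota(y) = y/|y|^2$, the goal reduces to showing $|\phi_e(x) - v_{1/2}| \leq 1/2$ for every $x \in X$, which is exactly the membership condition defining $X$.

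First I would expand the squared distance. Using $|v_{1/2}|^2 = 1/4$ together with $\langle y, v_{1/2}\rangle = y_1/2$ (where $y_1$ denotes the first coordinate of $y$, since $v_{1/2} = (1/2,0,\ldots,0)$), a short computation yields
\[
|\phi_e(x) - v_{1/2}|^2 = \frac{1}{|y|^2} - \frac{y_1}{|y|^2} + \frac14 = \frac{1 - y_1}{|y|^2} + \frac14 .
\]
Thus the desired inequality $|\phi_e(x) - v_{1/2}|^2 \leq 1/4$ is equivalent to the single scalar condition $y_1 \geq 1$, and the entire problem collapses to verifying this one inequality.

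The second step is to establish $y_1 \geq 1$ from the structure of $X$ and the index set $E$. From the definition of $X$ as the closed ball of radius $1/2$ centered at $v_{1/2}$, the first coordinate of any $x \in X$ satisfies $(x_1 - 1/2)^2 \leq 1/4$, hence $x_1 \in [0,1]$ and in particular $x_1 \geq 0$. Since $e = (e_1,\ldots,e_n) \in \N \times \Z^{n-1}$ forces $e_1 \geq 1$, I obtain $y_1 = x_1 + e_1 \geq 1$, which closes the argument.

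I do not expect a genuine obstacle; the only subtlety is recognizing the right quantity to expand. Geometrically the statement reflects that $X$ is tangent to the origin (its boundary passes through $0$), so translating by any admissible $e$ pushes $X$ entirely into the half-space $\{y_1 \geq 1\}$, and it is precisely this half-space that inversion in the unit sphere carries back inside $X$. The identity above is what makes this picture quantitative, and the constraint $e_1 \geq 1$ coming from $e \in \N \times \Z^{n-1}$ is exactly what guarantees it.
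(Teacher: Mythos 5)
Your proof is correct and takes essentially the same route as the paper: both arguments reduce the containment to the single scalar inequality $x_1 + e_1 \geq 1$, which holds because $x_1 \geq 0$ on $X$ and $e_1 \in \N$. Your expansion of $|\phi_e(x)-v_{1/2}|^2$ via the inner product, giving the identity $\tfrac{1-y_1}{|y|^2}+\tfrac14$, is in fact a cleaner way to organize the same computation than the paper's divide-and-square manipulation.
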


\begin{proof}
It suffices to show that for all $ x \in X$, $e \in E$
\[
\left|\phi_e(x) - v_{1/2}\right| \leq \frac{1}{2}
\]
Since $X = B(v_{1/2},1/2)$, for all \(x \in X\), \(x_1 + e_1 \geq 1\),
\[
\sqrt{1 + (x_2 + e_2)^2 + \ldots + (x_n + e_n)^2} \leq |x + e|.
\]
Dividing through by $|x+e|^2$ and squaring both sides gives
\[
\left(\frac{1}{|x + e|}\right)^4 + \left(\frac{x_2 + e_2}{|x + e|^2}\right)^2 + \ldots + \left(\frac{x_n + e_n}{|x + e|^2}\right)^2 \leq \frac{1}{|x + e|^2}.
\]
From here, subtracting terms yields
\[
\left[-\frac{1}{|x + e|^2} + \frac{1}{|x + e|^4} + \frac{1}{4}\right] + \left(\frac{x_2 + e_2}{|x + e|^2}\right)^2 + \ldots + \left(\frac{x_n + e_n}{|x + e|^2}\right)^2 \leq \frac{1}{4}.
\]
Equating 
$$\left[-\frac{1}{|x + e|^2} + \frac{1}{|x + e|^4} + \frac{1}{4}\right] = \left(\frac{1}{|x + e|^2} - \frac{1}{2}\right)^2$$ 
and taking square roots, we see that
\[
\left|\phi_e(x) - v_{1/2}\right| \leq \sqrt{\left(\frac{1}{|x + e|^2} - \frac{1}{2}\right)^2 + \left(\frac{x_2 + e_2}{|x + e|^2}\right)^2 + \ldots + \left(\frac{x_n + e_n}{|x + e|^2}\right)^2} \leq \frac{1}{2}.
\]

\end{proof}

We are interested in the existence and maximality of conformal extensions of $\cfe$. The existence of a conformal extension shows that $\cfe$ is a CIFS, while finding maximal extensions is needed for eigenfunction bounds. Introducing some notation, for all $\delta >0$, let
$$X(\delta) = \left\{  x \in \R^n : d( x,X) < \delta \right\} .$$
To show the existence of a uniformly contracting conformal extension we must find a $\delta >0$ so that
\begin{equation*}
    \phi_\omega (X(\delta)) \subset X(\delta) .
\end{equation*}

\begin{comment}
\begin{lem}
There is some $\delta>0$ so that, for all $\omega \in E^*$,
\begin{equation*}
    \phi_\omega (X(\delta)) \subset X(\delta) .
\end{equation*}
\end{lem}
\end{comment}

Note that in this lemma, we only consider $\phi_\omega$ corresponding to words of finite length greater than one, as it is not true for single letters (specifically, letting $v_1 = (1,0,\ldots,0)$, we see that \(\norm{D\phi_e(0)}=1\) whenever $e= v_1$). While this formally corresponds to a different dynamical system, they clearly share the same limit set. 

\begin{lem} For any $0<\delta < 1$,
\begin{equation*}
    \phi_w(X(\delta)) \subset X(\delta),
\end{equation*}
where $w \in E^* \setminus E$.
\end{lem}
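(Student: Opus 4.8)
The plan is to avoid any attempt at a uniform contraction estimate for $\phi_w$ on $X(\delta)$. Indeed, one checks that for $\delta$ close to $1$ even two-letter compositions $\phi_{e_1}\circ\phi_{e_2}$ can have $\|D(\phi_{e_1}\circ\phi_{e_2})(x)\|>1$ at points $x\in X(\delta)$ (for instance in $\R^2$ with $e_1=(1,3)$, $e_2=(1,0)$, $\delta=0.9$), so the desired invariance cannot come from a Lipschitz bound along segments to nearest points in $X$. Instead I would argue purely through forward-invariant regions, exploiting the two features that make compositions behave better than single letters: one application of any $\phi_e$ already forces the first coordinate to be positive, and on the half-space $\{x_1\ge 0\}$ every $\phi_e$ maps back into the closed ball $X$ itself.

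Concretely, first record the two descriptions $X=\{x:|x|^2\le x_1\}$ (so $X$ is exactly $\overline{B}(v_{1/2},1/2)$) and $X(\delta)=\{x:|x-v_{1/2}|<1/2+\delta\}$, together with $X\subseteq X(\delta)$ and the elementary observation that $x\in X(\delta)$ forces $x_1>-\delta$. Set $H=\{x\in\R^n:x_1\ge 0\}$. The first step is to show $\phi_e(X(\delta))\subseteq H$ for every $e\in E$: since $\phi_e(x)=(x+e)/|x+e|^2$, its first coordinate equals $(x_1+e_1)/|x+e|^2$, and because $e_1\ge 1$ and $x_1>-\delta$ we get $x_1+e_1>1-\delta>0$. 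This is where $\delta<1$ enters, and it simultaneously guarantees $x+e\ne 0$, so that $\phi_e$ is defined on all of $X(\delta)$.

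The second and key step is to show $\phi_e(H)\subseteq X$ for every $e\in E$. For $z\in H$ the first coordinate of $z+e$ is $z_1+e_1\ge 1>0$, so $\phi_e(z)$ is defined, and a direct computation gives
\[
|\phi_e(z)|^2-(\phi_e(z))_1=\frac{1}{|z+e|^2}-\frac{z_1+e_1}{|z+e|^2}=\frac{1-z_1-e_1}{|z+e|^2}\le 0,
\]
since $z_1\ge 0$ and $e_1\ge 1$. By the description $X=\{x:|x|^2\le x_1\}$ this says precisely $\phi_e(z)\in X$, with no restriction on $|z|$. I expect this one-line identity to be the heart of the argument.

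Finally I would assemble these. Since $|x|^2\le x_1$ on $X$ forces $x_1\ge 0$, we have $X\subseteq H$, and of course $X\subseteq X(\delta)$. Take $w=e_1\cdots e_k$ with $k\ge 2$. By the first step $\phi_{e_k}(X(\delta))\subseteq H$; applying the second step together with $X\subseteq H$, each subsequent generator maps $H$ into $X\subseteq H$, so $\phi_{e_{k-1}}\circ\phi_{e_k}(X(\delta))\subseteq X$ and all further images remain in $X$. Hence $\phi_w(X(\delta))\subseteq X\subseteq X(\delta)$, which is even stronger than the stated conclusion. The main obstacle is conceptual rather than computational: one must recognize that the naive contraction route fails for large $\delta$, and instead locate the intermediate region $H$ into which a single generator lands and out of which the next generator returns to $X$; once $H$ is identified, both verifications are immediate.
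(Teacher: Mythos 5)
Your proof is correct and follows essentially the same route as the paper: one application of a generator sends $X(\delta)$ into the half-space of nonnegative first coordinate (using $x_1>-\delta$ on $X(\delta)$ and $e_1\ge 1$, which is exactly where $\delta<1$ enters), and the next generator returns that half-space to $X$ because the translated point has first coordinate at least $1$ and the inversion $\iota$ maps $\{x:x_1\ge 1\}$ into $X$. The only real difference is in how that last inclusion is checked: the paper argues via M\"obius geometry, identifying $\iota(\partial\{x_1>1\})=\partial X$ by tracking $n+1$ boundary points, whereas you verify it directly from the description $X=\{x:|x|^2\le x_1\}$ via the identity $|\phi_e(z)|^2-(\phi_e(z))_1=(1-z_1-e_1)/|z+e|^2\le 0$, which is a cleaner and more elementary computation of the same fact.
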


\begin{proof}
 To show this, note that since $\phi_w (X) \subset X$, it suffices to show that $\phi_{ab}(X(\delta)) \subset X$ for any $a,b \in E$. Consider the set
$$R = \{ x \in \R^n : x_1 > 1\}.$$
We wish to show that $\iota (R) = X$. To do so, note that the boundary of $\partial R$ is a half plane, and thus described uniquely by $n+1$ points. If we can show that $\iota (\partial R) = \partial X$, we will be done. 

By properties of M\"obius transformations, we know that $\iota (\partial R)$ is either a sphere or a $n-1$ hyperplane. Notably, any $n+1$ points determine this image. For the point at infinity, $\iota(\infty) = 0$. Moreover, $ \iota( e_1) = e_1 $. Now, let $ p_i$, $i=1, \ldots, n-1$ be the point $ e_1 +  e_i$. Certainly $ p_i \in \partial R$ for each $i$, as 
\begin{equation*}
    \iota(p_i) = \frac{ e_1 +  e_i}{\left| e_1 + e_i\right|^2} = \frac{1}{2}( e_1  +  e_i) \in \partial X,
\end{equation*}
so our claim is proven. 

Defining the set 
\begin{equation*}
R_\delta = \left\{ x \in \R^n : x_1 > - \delta\right\} \supset X(\delta),    
\end{equation*}
note that the first coordinate of any point in $\phi_b(R_ \delta)$ is always positive when $ \delta < 1$. Hence for any $ x \in R_\delta$ and any $a \in E$, $\pi_1(\phi_b( x)+a) > 1 $, so $\phi_{ab}(X(\delta)) \subset \phi_{ab}(R_\delta) \subset X$, verifying our claim. Note also that this inequality is strict, for if $\delta = 1$ then $- e_1 \in X(\delta)$, and $\phi_{ e_1}(- e_1)$ is undefined.
\end{proof}

Hence we have shown that $n$-dimensional continued fractions are a CIFS. We now move onto tail bounds for these systems for continued fraction systems in any dimension.

    \begin{lem}[Tail Bounds]\label{lemma: cont frac tail bound}
    Let $R\geq 1$. Then for any $x,y\in X$,
    \begin{equation}\label{tailbds}
        \sum_{e \in E, \ |e| \geq R+2}  \frac{1}{|x+e|^{2t}}\rho_t(\phi_e(x))  \leq \frac{\omega_{n-1}}{2} C_{|\alpha| =1}(s,t)  \frac{R^{n-2t}}{2t-n} \rho_t(y),
    \end{equation}
    where $\omega_{n-1}$ is the surface area of the $n-1$ sphere of radius $R$ and 
    $$
    C_{|\alpha| =1}(s,t) = \min_{0<s<\sqrt{2}-1}  \frac{ \sqrt{n}}{s} (1-s(2+s))^{-t}.
    $$
    \end{lem}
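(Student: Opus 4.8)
The plan is to view the sum in \eqref{tailbds} as a tail of the Perron--Frobenius operator $F_t$ and to separate the estimate into an analytic comparison of values of $\rho_t$ and a purely arithmetic lattice-to-integral bound. Since $\phi_e=\iota\circ\tau_e$ is an inversion precomposed with the translation by $e$, its conformal derivative is $\norm{D\phi_e(x)}=|x+e|^{-2}$ (this is the computation $\norm{D\phi_\om(z)}=\lambda_\om|z-a_\om|^{-2}$ from the proof of Theorem~\ref{thm:derivative_bound}, with $\lambda_e=1$ and $a_e=-e$). Hence the summand equals $\norm{D\phi_e(x)}^t\rho_t(\phi_e(x))=|x+e|^{-2t}\rho_t(\phi_e(x))$, and it suffices to (i) bound $\rho_t(\phi_e(x))$ by $C_{|\alpha|=1}(s,t)\,\rho_t(y)$ and (ii) bound the scalar lattice sum $\sum_{|e|\ge R+2}|x+e|^{-2t}$ by $\tfrac{\omega_{n-1}}{2}\tfrac{R^{n-2t}}{2t-n}$.

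For step (i) I would apply the first-order case of Theorem~\ref{thm:derivative_bound}\eqref{eq:dern>2}. For $\cfe$ the conformal extensions constructed above are defined on $X(\delta)$ with $\delta\uparrow 1$, so $\eta_\cS$ may be taken equal to $1$; specialising the M\"obius bound to $|\alpha|=1$ (where $\alpha!=1$), letting $u\uparrow s$, and minimising over $s\in(0,\sqrt 2-1)$ produces exactly the constant $C_{|\alpha|=1}(s,t)=\min_s\tfrac{\sqrt n}{s}(1-s(2+s))^{-t}$ controlling $\rho_t$ and its first derivatives. This yields a pointwise comparison of the form $\rho_t(\phi_e(x))\le C_{|\alpha|=1}(s,t)\,\rho_t(y)$ valid for all $x,y\in X$, after which the factor $C_{|\alpha|=1}(s,t)\rho_t(y)$ pulls out of the sum and reduces everything to (ii).

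The main work, and the principal obstacle, is the scalar estimate (ii). Because $e\in\N\times\Z^{n-1}$ has $e_1\ge 1$, the translated unit cubes $Q_e:=e+(-\tfrac12,\tfrac12]^n$ are pairwise disjoint and lie in $\{z_1>\tfrac12\}$; moreover $|x|\le 1$ for $x\in X$, so for $z\in Q_e$ with $|e|\ge R+2$ one has $|x+z|\ge|e|-|x|-|z-e|\ge R+1-\tfrac{\sqrt n}{2}\ge R$ as long as $n\le 4$ --- which is exactly why the threshold is taken at $R+2$ and covers the dimensions $n=2,3$ of interest. Consequently $x+\bigcup_{|e|\ge R+2}Q_e\subset\{w:w_1>0,\ |w|\ge R\}$, and, provided the termwise domination $|x+e|^{-2t}\le\int_{Q_e}|x+z|^{-2t}\,dz$ holds, summing over the disjoint cubes gives
\begin{equation*}
\sum_{|e|\ge R+2}\frac{1}{|x+e|^{2t}}\le\int_{\{w_1>0,\,|w|\ge R\}}\frac{dw}{|w|^{2t}}=\frac{\omega_{n-1}}{2}\int_R^\infty r^{\,n-1-2t}\,dr=\frac{\omega_{n-1}}{2}\cdot\frac{R^{n-2t}}{2t-n},
\end{equation*}
where the factor $\tfrac12$ records the restriction to the half-space $\{w_1>0\}$ and the radial integral converges precisely because $t\in\Fin(\cfe)$ forces $2t>n$ (indeed $\theta(\cfe)=n/2$). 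Combining with step (i) yields \eqref{tailbds}.

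The delicate point is justifying the termwise domination $|x+e|^{-2t}\le\int_{Q_e}|x+z|^{-2t}\,dz$, i.e.\ that the value of $w\mapsto|w|^{-2t}$ at the cube centre does not exceed its cube average. This is not a consequence of convexity (the Hessian of $|w|^{-2t}$ is indefinite), nor of a naive mean-value property (which fails for cubes), but should follow from the fact that $|w|^{-2t}$ is \emph{subharmonic} on $\R^n\setminus\{0\}$ when $2t>n$, since $\Delta|w|^{-2t}=2t(2t+2-n)|w|^{-2t-2}>0$: the positive Laplacian forces the cube average to exceed the centre value once the centre $x+e$ is far from the origin, which the buffer $|e|\ge R+2$ guarantees. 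Making this quantitative (controlling the higher-order terms in the cube-average expansion uniformly in $e$, so that the clean constant $\tfrac{\omega_{n-1}}{2}$ is attained without loss) is the step requiring the most care.
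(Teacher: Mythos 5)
Your overall skeleton matches the paper's proof: the derivative is $\norm{D\phi_e(x)}=|x+e|^{-2}$, the factor $\rho_t(\phi_e(x))$ is compared to $\rho_t(y)$ via the $|\alpha|=1$ case of Theorem \ref{thm:derivative_bound} together with the mean value theorem and $\diam(X)=\eta_\cS=1$ (this is exactly the paper's step), and the remaining scalar lattice sum is converted to the radial integral $\frac{\omega_{n-1}}{2}\int_R^\infty r^{n-1-2t}\,dr$. Step (i) of your argument is therefore fine and is the same as the paper's.

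The gap is in your step (ii), and you have named it yourself: the termwise domination $|x+e|^{-2t}\le\int_{Q_e}|x+z|^{-2t}\,dz$ is never established, and the mechanism you propose does not deliver it. Subharmonicity of $w\mapsto|w|^{-2t}$ gives the sub-mean-value inequality over \emph{balls and spheres centered at the point}, not over cubes: writing the cube average in polar coordinates about its center produces averages over \emph{partial} spheres once the radius exceeds the inradius, and subharmonicity says nothing about those. A Taylor expansion does show that the cube average exceeds the center value up to an error of order $|e|^{-2t-4}$ (the linear and cubic terms average to zero and the quadratic term contributes $\frac{1}{24}\Delta(|w|^{-2t})>0$), but for $R=1$ the nearest cubes have $|e|\approx 3$ and the higher-order remainder is not obviously dominated by the positive quadratic contribution; making that uniform is precisely the work you defer, so as written the proof is incomplete. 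The paper avoids this entirely by a cruder two-step bound: first $|x+e|\ge|e|-1$ (since $|x|\le 1$ on $X$), which removes the dependence on $x$, and then a standard integral comparison of $\sum_{|e|\ge R+2}(|e|-1)^{-2t}$ with $\frac12\int_{|w|\ge R}|w|^{-2t}\,dw$, where the buffer $R+2$ versus $R$ absorbs both the shift by $1$ and the half-diameter $\sqrt{n}/2$ of the unit cells. If you replace your termwise cube-average claim with this monotone (radially decreasing) comparison, the argument closes and coincides with the paper's.
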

    \begin{proof}
    Consider $e \in \Omega=\{(e_1,\ldots,e_n) \in \N \times \Z^{n-1} :|e| \geq R+2\}$. From the definition of $\phi_e$, we immediately have  
    $$ 
    |\phi_e(x)| \leq \frac{1}{R}\quad  \forall x \in X.
    $$
    In addition, by the Mean Value Theorem and the derivative estimate \eqref{eq: derivative Drho} with $|\alpha| =1$,
    we have 
    $$
    \rho_t(x)-\rho_t(y)\le  C_{|\alpha| =1}(s,t)|x-y|\quad \forall x,y\in X,
    $$
    and as a result 
    \begin{equation} \label{fnest}
        \sum_{e \in \Omega} \frac{1}{|x+e|^{2t}}  \rho_t(\phi_e(x))\leq \diam(X) C_{|\alpha| =1}(s,t) \rho_t(y) \sum_{e \in \Omega} \frac{1}{|x+e|^{2t}} = C_{|\alpha| =1}(s,t) \rho_t(y) \sum_{e \in \Omega} \frac{1}{|x+e|^{2t}}
    \end{equation}
    for any $x,y \in X$.  To estimate the sum we use the \textit{integral comparison test}. Using that for any $x \in X$ and any $e \in E$, 
    $$
    |e-1| \leq |x+e|,
    $$
     we have 
     $$
     \sum_{e \in \Omega} \frac{1}{|x+e|^{2t}}\le \sum_{e \in \Omega} \frac{1}{|e-1|^{2t}}\le \frac{1}{2}\int_{|x|\geq R} \frac{dx}{|x|^{2t}}.
     $$
    Using the spherical coordinates $\rho=|x|$, we compute
    $$
    \int_{|x|\geq R} \frac{dx}{|x|^{2t}}=\omega_{n-1}\int_R^\infty \rho^{n-1-2t}d \rho= \omega_{n-1}\frac{R^{n-2t}}{2t-n}.
    $$
    Combining, we obtain the result. 
    \end{proof}

\begin{rem}
Following the lines of more refined analysis from \cite{Falk}, we could obtain a slightly sharper tail bounds. However, the above bounds are more than sufficient for our purpose, and the dominating error is due to discretization of $C(X)$. 
\end{rem}

\subsection{Quadratic perturbations of linear maps (\it{abc-examples})}

In this section we discuss a CIFS in the the complex plane which does not consist of M\"obius maps. Suppose that $r \in (0,1),$ $X = B(0,r):=\{z\in \mathbb{C}: \ |z|\le r\},$ and let 
$$
\phi_e(z) = a_e z+ b_e + c_ez^2
$$ 
for $e \in E \subseteq \N.$ The corresponding (formal) CIFS is denoted by $\fs_{abc} = \{X,I,\{\phi_e:X \rightarrow X \}_{e \in E} \}.$ An arbitrary set of such maps will not be a CIFS. The maps may not be contractions, have intersecting images, or be non-invertible. Conformality is automatic, so for verification purposes we need to do the following: 
\begin{enumerate} 
    \item Verify the maps $\phi_i$ are contractions on $X$.
    \item Find an open, connected set $W \supset X$ for which each $\phi_i$ extends to a uniformly contracting map taking $W$ into itself.
    \item Verify the OSC holds on $X$.
    \item Verify the Bounded Distortion Property.
\end{enumerate}

Many of these questions may be verified using computational means, provided the system satisfies appropriate separation properties. An investigation of these algorithms is beyond the scope of the paper, and instead we show how to verify this is a CIFS in one particular case. In particular, consider the CIFS $\fs_{abc}$ consisting of the maps
\begin{align*}
    \phi_1(z) &= 0.25i z + 0.1+ 0.1z^2 \\
    \phi_2(z) &= 0.2iz -0.1-0.1i +0.05z^2 \\
    \phi_3(z) &= 0.1z+0.1-0.1i+0.04z^2
\end{align*}
defined on $X$ with $r= 0.2$. To show this system maps $X$ to itself we use norm estimates. For all $e = 1,2,3,$ we have that 
\[ |\phi_e(z)| \leq r|a_e|+|b_e|+r^2|c_e|\]
implying
\begin{align*}
    |\phi_1(z)| &\leq 0.25r + 0.1+ 0.1r^2 = 0.154 < 0.2 \\
    |\phi_2(z)| &\leq 0.2r +\sqrt{0.02} +0.05r^2=\frac{\sqrt{2}}{10}+.042 < 0.2\\
    |\phi_3(z)| &\leq 0.1r+\sqrt{0.02}+0.04r^2 = \frac{\sqrt{2}}{10}+.0216 < 0.2
\end{align*}
for all $z \in X .$ Hence $\phi_e(X) \subset X$ for all $e \in E$. To verify the OSC, simply note that $d(b_{e_i},b_{e_j}) \geq 0.1$ for all $i \neq j$. Pairing this with the fact that $r|a_e|+r^2|c_e| \leq .054 <0.1$ for all $e \in E$, it is obvious that $\phi_{e_i}(X) \cap \phi_{e_j}(X) = \emptyset$ for all $i\neq j$. More explicitly, we have that 

\begin{align*}
    \phi_1(X) &\subseteq B(b_1, |a_1|r + |c_1|r^2) = B(0.1,0.054)\\
    \phi_2(X) &\subseteq B(b_2, |a_2|r + |c_2|r^2)=B(-0.1-0.1i, 0.042) \\
    \phi_3(X) &\subseteq B(b_3, |a_3|r + |c_3|r^2) = B(0.1-0.1i,0.0216).
\end{align*}
Checking case by case, we find that

\begin{enumerate}
    \item For $\phi_1$ and $\phi_2$, 
    $$|b_1-b_2| = |0.1-(-0.1-0.1i)| = \frac{\sqrt{5}}{10} \geq 0.096 = 0.054+0.042 = r_1+r_2,$$
    so $\phi_1(X)$ and $\phi_2(X)$ are disjoint.
    \item For $\phi_1$ and $\phi_3$, 
    $$|b_1-b_3| = |0.1-(0.1-0.1i)| = \frac{1}{10} \geq 0.0765 = 0.054+0.0216 = r_1+r_3,$$
    so $\phi_1(X)$ and $\phi_3(X)$ are disjoint.
    \item For $\phi_2$ and $\phi_3$, 
    $$|b_2-b_3| = |-0.1-0.1i-(0.1-0.1i)| = \frac{1}{5} \geq 0.0258 = 0.042+0.0216 = r_2+r_3,$$
    so $\phi_2(X)$ and $\phi_3(X)$ are disjoint.
\end{enumerate}
Hence our system satisfies the OSC. To find an open set $W \supset X$ satisfying property 3, recall that 
\[ \eta := \min \{ 1, \dist(X,\partial W \},\]
we wish to find the supremum of $r$ for which $|D\phi_e(z)| < 1$ whenever $|z| \leq r$. For an arbitrary $r>0$, taking the supremum norm on $B(0,r)$ yields
\[\norm{D \phi_e}_\infty = |a_e|+2r|c_e|,\]
we must solve 
\[ |a_i|+2r_i|c_i| = 1 \Longrightarrow r_i = \frac{1-|a_i|}{2|c_i|}.\]
Doing so, we have that
\[r_1 = \frac{1-0.25}{2\cdot 0.1} = 5 \cdot 0.75= 3.75,\ r_2 = \frac{1-0.2}{2 \cdot 0.05} = 0.8 \cdot 10 = 8, \ r_3 = \frac{1-.1}{2 \cdot .04}=0.9 \cdot 12.5 = 11.25 .\]
Hence $\eta = 1$ for this example. 

Moving onto injectivity, it is sufficient to show the existence of a nonzero directional derivative for some direction. In particular, taking derivatives yields
\begin{align*}
    -i\phi_1'(z) &= 0.25  - 0.2iz \\
    -i\phi_2'(z) &= 0.2  -0.1iz \\
    \phi_3'(z) &= 0.1+0.08z.
\end{align*}
Since $|z| < 0.2$ we have that
\begin{align*}
    \Re(-i\phi_1'(z)) &\geq 0.25  - 0.04=0.21>0 \\
    \Re(-i\phi_2'(z)) &\geq 0.2  -0.02=0.18>0 \\
    \Re(\phi_3'(z)) &\geq 0.1-0.016=0.084>0,
\end{align*}
so injectivity has been proven. Of course, since the alphabet is finite, the tail bounds are not needed.

\begin{figure}[h]
    \centering
    \includegraphics[width = .5\textwidth]{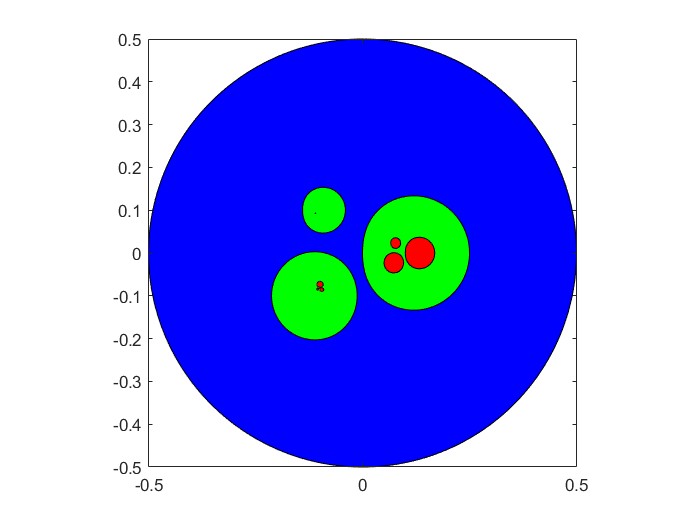}
    \caption{The first (green) and second (red) iterations of a system consisting of quadratic perturbations of linear maps.}
    \label{fig:warsaw-2it}
\end{figure}

\subsection{An application to Schottky (Fuchsian) groups}

The next examples which are not  IFS are two and three dimensional 
Schottky also known as Fuchsian Groups. The 
 2D Schottky Groups, are the classical examples of nonhyperbolic groups generated by M\"obius transformations. To describe the general set up of this example, suppose  $B_j$, $j =  1, 2, \dots, q$, are disjoint closed balls (disks)  in $ \hat \C$, and consider M\"obius transformations of the form
\[g_j : \hat \C \setminus \bar B_{j} \rightarrow B_i \text{ defined by } g_j(z) = \frac{a_jz+ b_j}{c_jz+d_j}.\]
 For each $j$, $g_j$ is a contraction on its domain of definition. However, this is not yet a CGDMS as it does not satisfy the open set condition. To rectify this, consider the $q(q-1)$ maps
 \[g_{j,i}: \overline{B_i} \rightarrow B_j, \text{ where } g_{j,i} = g_j|_{\overline{B_i}}\]
 all of which are defined when $i \neq  j$. The incidence matrix $A$ is then just a matrix of $1$'s whenever $i \neq j$, and zeros on the diagonal. Moreover, extending $g_j$ to the whole Riemann Sphere, it is apparent that $|D g_j(z)| \geq 1$ only when $z \in \bar B_{j}$, so uniform contractivity follows from the finiteness of the system.

 Consider a special well-studied case of three disks of the same radius $r=\frac{1}{\sqrt{3}}$ (cf. Figures \ref{fig:Schottky3}-\ref{fig:Schottky3_mesh}) centered at 
 $$
 \frac{2}{\sqrt{3}},\quad -\frac{1}{\sqrt{3}}+i,\quad \text{and}\quad -\frac{1}{\sqrt{3}}-i.
 $$
 with the corresponding maps
 \begin{align*}
 g_1(z)&=\frac{2}{\sqrt{3}}+\frac{1}{3z-2\sqrt{3}}\\
 g_2(z)&=-\frac{1}{\sqrt{3}}+i+\frac{e^{-2\pi i/3}}{3z+\sqrt{3}-3i}\\
 g_3(z)&=-\frac{1}{\sqrt{3}}-i+\frac{e^{2\pi i/3}}{3z+\sqrt{3}-3i}.
 \end{align*}
The uniform contractivity follows from the properties of the maps $g_j$ and that by the constuction the distance between centers of the balls is $2$ and as a result all 3  disks are separated by 
$$
2-2r = 2-\frac{2}{\sqrt{3}} =0.845\dots.
$$.

 The incidence matrix for this example is 
 $$
 A=\left(
 \begin{array}{ccc}
0 & 1 & 1\\
1 & 0 & 1\\
1 & 1 &0
 \end{array}
 \right).
 $$

 \begin{figure}[!htb]  
    \begin{minipage}[t]{0.48\textwidth}
        \centering
        \includegraphics[width=1.0\textwidth]{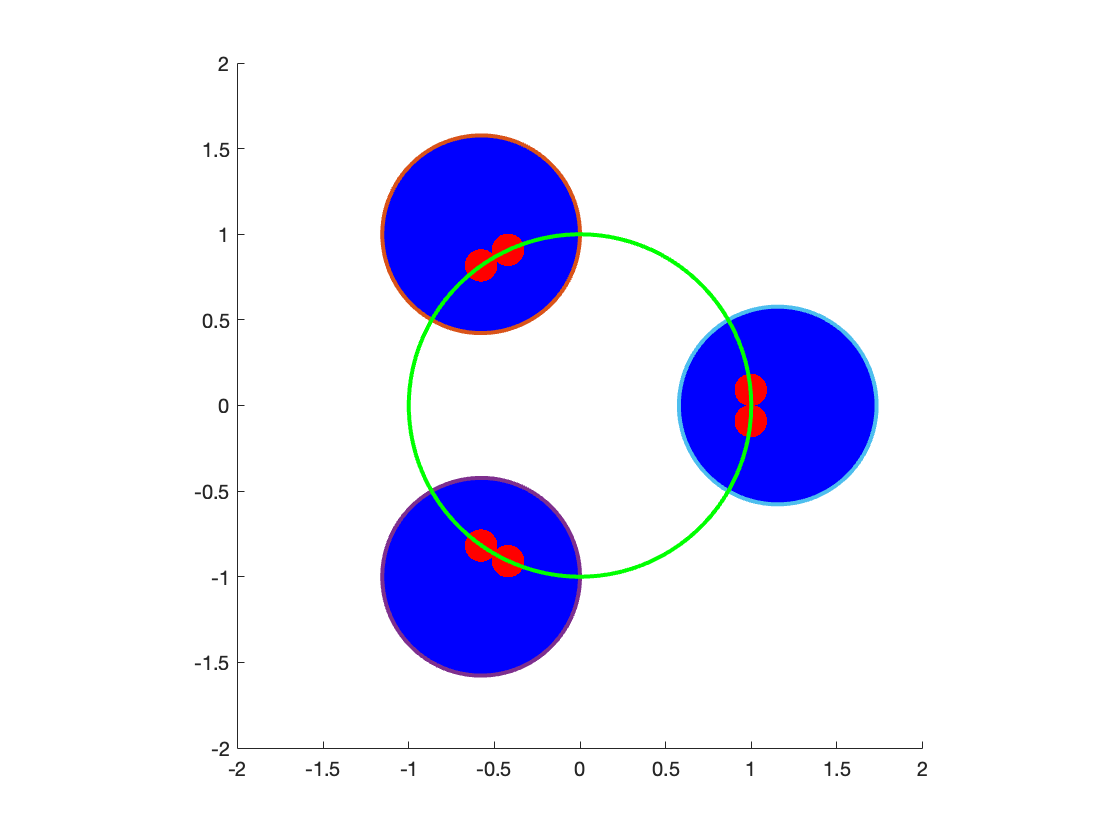}
        \caption{First iteration maps of 3 circle Schottky}
        \label{fig:Schottky3}
    \end{minipage}
  \hfill
  \begin{minipage}[t]{0.48\textwidth}
    \centering
        \includegraphics[width=1.0\textwidth]{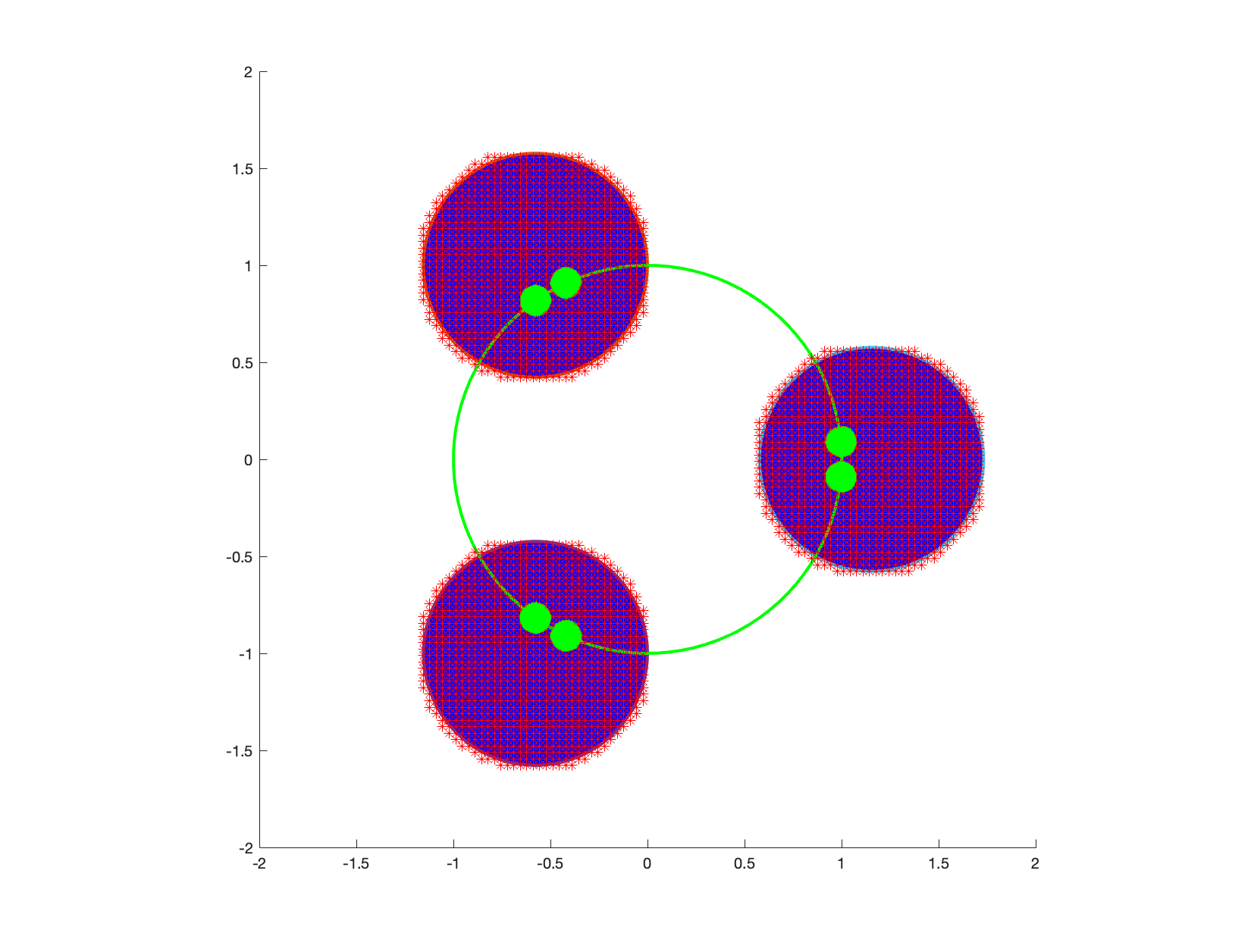}
        \caption{First iteration maps of 3 circle Schottky with mesh}
        \label{fig:Schottky3_mesh}
  \end{minipage}
\end{figure}

This example was considered in \cite{mcmullen} and a different numerical approach was given in  \cite{Pollicott_Vytnova_2022}.

The above setting can be easily generated to 3D situation, one just need to replace the M\"obius transformations with inversion maps
$$
g_j(x)= x_j+\frac{r^2}{|x-x_j|^2}(x-x_j),
$$
where $x_j$ is the center and $r$ is the radius of the inversion ball. 

 As an example, consider 4 balls of the same radius $r=1/2$ centred at the vertices of regular tetrahedron 
$$
x_1=\alpha(1,1,1), \quad x_2=\alpha(1,-1,-1),\quad x_3=\alpha(-1,1,-1)\quad \text{and}\quad  x_4=\alpha(-1,-1,1),
$$
with a scaling factor $\alpha=3/4$, see Figure \ref{fig:Schottky 3D}. One can easily show that with this choice of parameter $\alpha$ and radius $r$ the balls are disjoint. Moreover, the uniform contractivity follows from the properties of the inversion maps and the fact that, by the construction, the distance between the centers of the balls is $2\sqrt{2}\cdot 3/4=3\sqrt{2}/2 $. As a result, all four balls are separated by 
$$
\frac{3\sqrt{2}}{2}-2r = \frac{3\sqrt{2}}{2}-1=1.121320\dots.
$$. 

\begin{figure}[!htb]  
        \centering
        \includegraphics[width=0.8\textwidth]{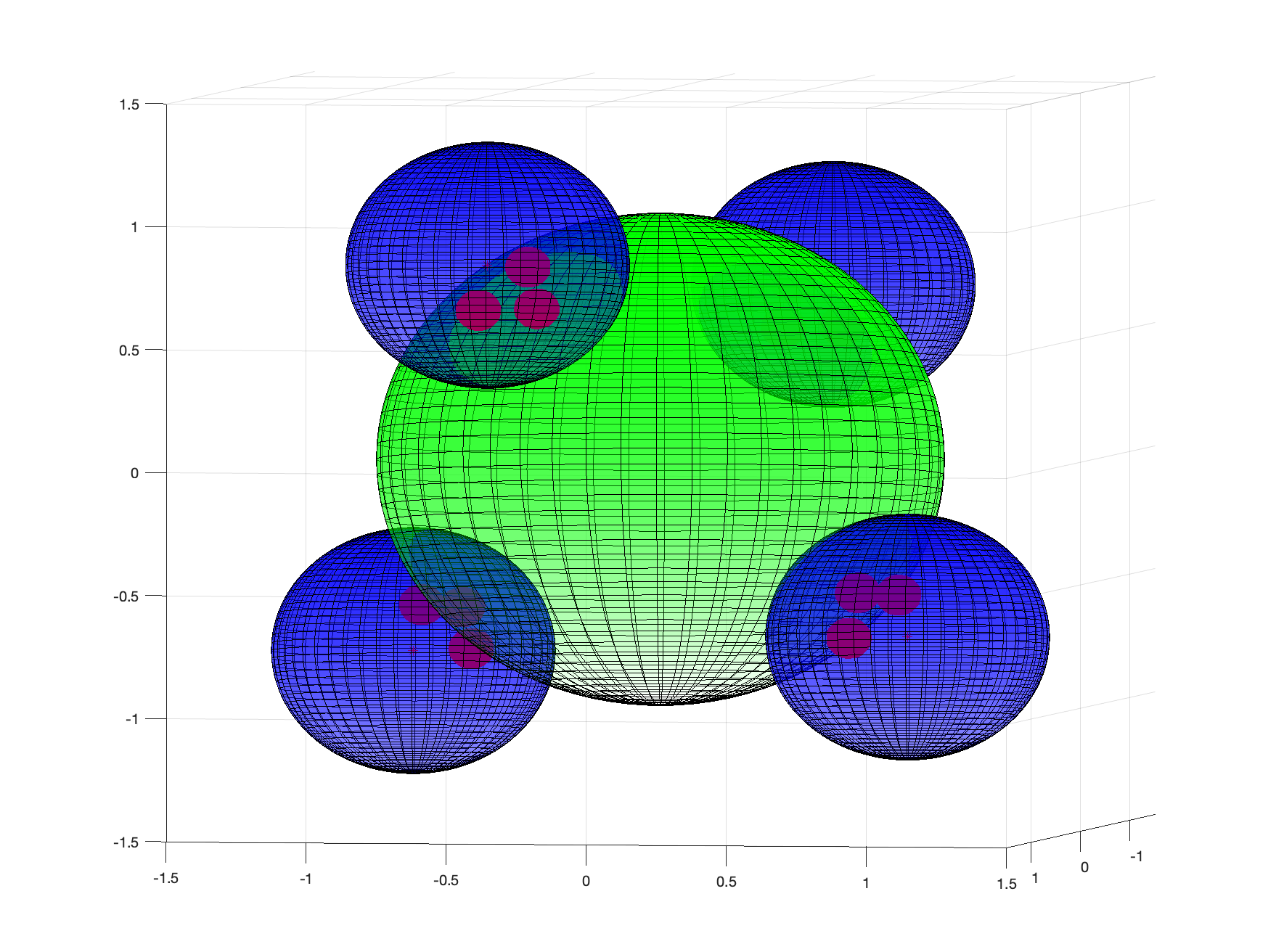}
        \caption{3D Schottky with 4 spheres, first iteration }
        \label{fig:Schottky 3D}
\end{figure}

Similarly to the 2D case, the  incidence matrix for this example is 
 $$
 A=\left(
 \begin{array}{cccc}
0 & 1 & 1 & 1\\
1 & 0 & 1 & 1\\
1 & 1 & 0 & 1\\
1 & 1 & 1 & 0\\
 \end{array}
 \right)
 $$

\subsection{The Apollonian gasket}
We now focus our attention on one of the most famous fractals, the Apollonian packing. To fully describe the packing as the limit of a conformal IFS, suppose that $k \in \{1,2, \ldots, 6 \}$ and consider the angles
\[ \theta_k = (-1)^k \frac{2 \pi}{3 } \text{ and } \theta_k' = \frac{2 \pi k}{3} \mod 2 \pi.\]
The generators of the system then have a representation via the maps
\begin{align*}
    &f(z) = \frac{(\sqrt{3} -1)z +1}{-z + \sqrt{3}+1}, \ R_{\theta_k}, \text{ and } R_{\theta_k'}\\
\end{align*}
where $R_\theta$ is the standard complex rotation by angle $\theta$. With this notation, the infinite set of maps generating the Apollonian packing is $$\{\phi_{k,n}: k=1,\dots,6 \mbox{ and } n\in \N\}$$ where
\[ \phi_{k,n} = R_{\theta_k'} \circ f^n \circ R_{\theta_k} \circ f .\] 

 \begin{figure}[!htb]  
    \begin{minipage}[t]{0.48\textwidth}
        \centering
        \includegraphics[width=1.0\textwidth]{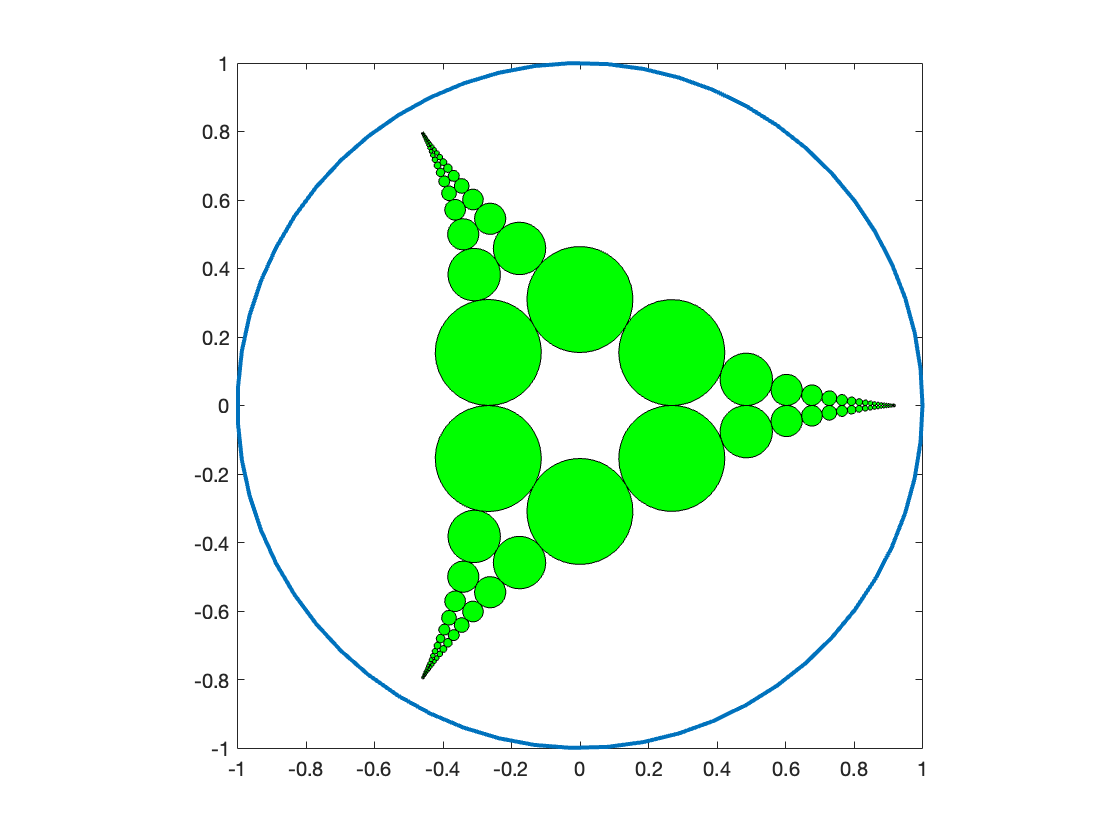}
        \caption{Apollonian. First iteration}
        \label{fig:Apollonian_1}
    \end{minipage}
  \hfill
  \begin{minipage}[t]{0.48\textwidth}
    \centering
        \includegraphics[width=1.0\textwidth]{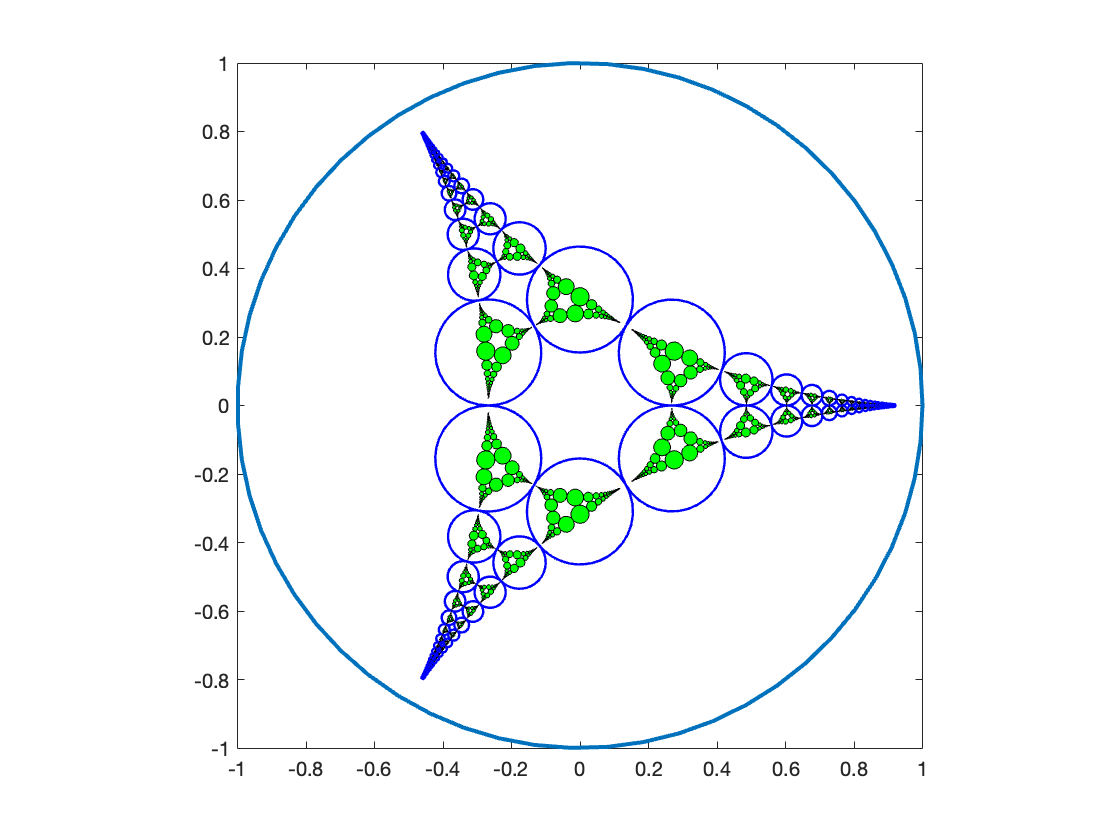}
        \caption{Apollonian. Second iteration}
        \label{fig:Apollonian_2}
  \end{minipage}
\end{figure}

For the rest of this section, we let $\lambda =\sqrt{3}$.
\begin{prop}[$W = B(0,1+\lambda)$]
    The maximal domain furnishing a conformal extension for the Apollonian IFS is $B(0,1+\lambda)$.
\end{prop}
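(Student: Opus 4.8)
The plan is to exploit that every generator $\phi_{k,n}=R_{\theta_k'}\circ f^{\,n}\circ R_{\theta_k}\circ f$ is a finite composition of the M\"obius map $f(z)=\frac{(\lambda-1)z+1}{-z+\lambda+1}$, its iterates, and rotations about the origin, hence is itself a M\"obius transformation and so conformal wherever it is holomorphic. The quantity $1+\lambda$ should enter through the pole of $f$: solving $-z+\lambda+1=0$ gives $z=1+\lambda$, so $f$ is holomorphic on $\C\setminus\{1+\lambda\}$ and $B(0,1+\lambda)$ is precisely the largest origin-centered ball on which $f$ has no pole, with the pole sitting on $\partial B(0,1+\lambda)$. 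Since rotations preserve origin-centered balls, the whole analysis reduces to understanding how $f$ and its iterates act on $B(0,1+\lambda)$.

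For the existence half, I would first record the parabolic normal form of $f$. One checks $f$ has the unique fixed point $z=1$, and conjugating by $w=1/(z-1)$ turns $f$ into the translation $w\mapsto w-1/\sqrt3$; consequently $f^{\,n}$ has its pole at $1+\sqrt3/n\in(1,\,1+\lambda]$ and satisfies $f^{\,n}(\infty)=1-\sqrt3/n$. Next I would compute $f\big(B(0,1+\lambda)\big)$: because the pole lies on the boundary circle, $f$ maps that circle to a line and the disk to a half-plane $H$; tracking $f(-(1+\lambda))$ and $f(i(1+\lambda))$ identifies $H=\{\,\Re z>(1-\sqrt3)/4\,\}$, which contains $0$. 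After the rotation $R_{\theta_k}$ (with $\theta_k=(-1)^k 2\pi/3$) we get a half-plane $H'$, and the crucial point is that the pole $1+\sqrt3/n$ of $f^{\,n}$ lies strictly in the complementary half-plane: its rotate $R_{-\theta_k}(1+\sqrt3/n)$ has real part $-\tfrac12(1+\sqrt3/n)\le-\tfrac12<(1-\sqrt3)/4$, so it is not in $\overline H$. Hence $f^{\,n}$ is conformal on $H'$ and carries it to a bounded disk, and the final rotation preserves origin-centered balls. It then remains to verify that this image disk lies in $B(0,1+\lambda)$, which I would do uniformly in $n$ using the parabolic contraction of $f$ toward $z=1$; this yields each $\phi_{k,n}$ as a conformal self-embedding of $B(0,1+\lambda)$.

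For maximality I would argue that no strictly larger concentric ball works: for $R>1+\lambda$ the pole $1+\lambda$ of the common innermost factor $f$ enters the interior of $B(0,R)$, so $f\big(B(0,R)\big)$ switches from a bounded disk to the exterior of a disk, i.e.\ an unbounded neighborhood of $\infty$; propagating this through the remaining maps shows the self-map condition $\phi_{k,n}\big(B(0,R)\big)\subset B(0,R)$ breaks down. Equivalently, the boundary circles $\phi_{k,n}\big(\partial B(0,1+\lambda)\big)$ are internally tangent to $\partial B(0,1+\lambda)$, reflecting the tangencies of the Apollonian packing, so the ball is already as large as possible. I expect the main obstacle to be exactly this interface between the two halves: on the sufficiency side, checking \emph{uniformly in $n$ and in both parities of $k$} that $f^{\,n}(H')$ returns inside $B(0,1+\lambda)$, and on the maximality side pinning the tangency precisely; in both places the parabolic normal form $w\mapsto w-1/\sqrt3$ is the tool that makes the estimates tractable, since it linearizes the iterates and locates all the relevant poles and images explicitly.
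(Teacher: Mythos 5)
Your setup is sound and matches the paper's up to the midpoint: your half-plane $f(B(0,1+\lambda))=\{\Re z>(1-\sqrt3)/4\}$ agrees with the paper's vertical line $L(t)=-\tfrac{1}{2(\lambda+1)}+it$, your normal form $w\mapsto w-1/\sqrt3$ is exactly the paper's Jordan decomposition $M=VJV^{-1}$, and your observation that the pole $1+\lambda/n$ of $f^{\,n}$ lies outside the closure of the rotated half-plane is correct and necessary. The genuine gap is the step you defer: showing $f^{\,n}(H')\subset B(0,1+\lambda)$ uniformly in $n$. That is where all the content of the proof lives, and the tool you name --- ``parabolic contraction of $f$ toward $z=1$'' --- cannot deliver it, because parabolic iteration contracts only asymptotically and the binding case is $n=1$. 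The paper closes this step by explicit computation: after $V^{-1}$ and the translation by $n/\lambda$, the image of the half-plane is the disk of radius $\rho_2=\frac{\lambda+1}{\lambda}$ centered at $\bigl(\frac{1+\lambda}{2\lambda}+\frac{n}{\lambda},\frac{\lambda+1}{2}\bigr)$, while the pullback of $B(0,1+\lambda)$ under the outer factor $1-1/z$ is the exterior of the disk of radius $\rho_1=\frac{1+\lambda}{\lambda(2+\lambda)}$ centered at $\bigl(-\frac{1}{\lambda(2+\lambda)},0\bigr)$; one then checks that $\rho_1+\rho_2=2$ exactly, while the distance between the centers is $2.0442\ldots$ at $n=1$. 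The margin is about $0.04$, so no soft contraction argument will substitute for this computation; you must carry it out explicitly (or an equivalent one in your $w$-coordinate).

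Two further points. Your maximality argument is flawed: for $R>1+\lambda$ the pole of the inner factor $f$ does enter $B(0,R)$, but $\phi_{k,n}$ is still a globally defined M\"obius map with $\phi_{k,n}(1+\lambda)=R_{\theta_k'}\bigl(f^{\,n}(\infty)\bigr)=R_{\theta_k'}(1-\lambda/n)$ finite, so $\phi_{k,n}(B(0,R))$ need not become unbounded and the self-map condition does not automatically break as you claim. Likewise the asserted internal tangency of $\phi_{k,n}(\partial B(0,1+\lambda))$ to $\partial B(0,1+\lambda)$ is false: the computation above gives strict separation ($2.044\ldots>2$), so $\phi_{k,n}(\overline{B(0,1+\lambda)})$ is compactly contained in the open ball. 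Finally, be aware that the paper's own proof establishes only the inclusion $\phi_{k,n}(B(0,1+\lambda))\subset B(0,1+\lambda)$, which is all the numerical method requires, and does not actually prove maximality; you are attempting strictly more than what is proved there.
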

\begin{proof}
We will show that $X_\delta:=B(0,\delta)$  with $\delta=1+\lambda$ satisfies  $\phi_{k,n}(X_\delta)\subset X_\delta$. Writing 
$$
f(z)=\frac{(\lambda-1)z+1}{-z+(\lambda+1)}, 
$$
consider the matrix representation of $f(z)$, $M$ given by 
\[
M = \begin{pmatrix}
\lambda -1 & 1 \\ -1 & \lambda+1
\end{pmatrix}. 
\]
Notice that 
\[M=VJV^{-1} = \begin{pmatrix}
-1 & 1 \\
-1 & 0
\end{pmatrix}
\begin{pmatrix}
\lambda & 1 \\ 
0 & \lambda
\end{pmatrix}
\begin{pmatrix}
0 & -1 \\
1 & -1
\end{pmatrix},\]
as $\lambda$ is the single eigenvalue of multiplicity 2 for $M$. By nilpotence
\[J^n = \left[ \lambda I + \begin{pmatrix}
0 & 1 \\
0 & 0
\end{pmatrix} \right]^n= \lambda^n I + n \lambda^{n-1}\begin{pmatrix}
0 & 1 \\
0 & 0
\end{pmatrix} = \begin{pmatrix}
\lambda^n & n \lambda^{n-1} \\
0 & \lambda^n
\end{pmatrix}\]
and so the matrix representation of $f^n(z)$ is
\[M^n = \lambda^n\begin{pmatrix}
-1 & 1 \\
-1 & 0
\end{pmatrix} \begin{pmatrix}
1 & n/ \lambda \\
0 & 1
\end{pmatrix}\begin{pmatrix}
0 & -1 \\
1 & -1
\end{pmatrix} .\]
Using this representation, and the matrix representation for the rotation
\[
R_{\theta_k} = \begin{pmatrix}
e^{i \theta_k} & 0 \\  0 & 1
\end{pmatrix} ,
\]
we see that the map
\[\phi_{k,n}(z) = R_{\theta_k'} \circ f^n \circ R_{\theta_k} \circ f(z)\]
has the matrix representation $\Phi_{k,n}$ given by
\begin{equation}\label{eq: mobdecomp}
\Phi_{k,n} = \lambda^n \begin{pmatrix}
e^{i \theta_k'} & 0 \\
0 & 1
\end{pmatrix} 
\begin{pmatrix}
-1 & 1 \\
-1 & 0
\end{pmatrix} 
\begin{pmatrix}
1 & n/\lambda \\
0 & 1
\end{pmatrix}
\begin{pmatrix}
0 & -1 \\
1 & -1
\end{pmatrix} 
\begin{pmatrix}
e^{i\theta_k} &0 \\
0 & 1 
\end{pmatrix}
\begin{pmatrix}
\lambda -1 & 1 \\
-1 & \lambda +1
\end{pmatrix} .
\end{equation}

Now we consider the action of each map on $X_\delta$. Start with the image of $f(X_\delta)$. Since $1+\delta = 1+\lambda$ is a pole of $f(z)$, $f(z)$ maps the ball $X_\delta$ onto the right part of the plane of the vertical line 
$$
L(t) = -\frac{1}{2(\lambda+1)}+it,\quad t \in \R.
$$
This is easy to see, for
\[f(-1-\lambda)= \frac{-(\lambda - 1)(\lambda+1) +1}{(\lambda+1)+\lambda+1} = -\frac{1}{2(\lambda+1)}\]
and
\[f((1+\lambda) i)=\frac{(\lambda - 1)(1+\lambda) i +1}{-(1+\lambda) i+\lambda+1}=\frac{(2i+1)(1+i)}{2(\lambda+1)} = \frac{-1+3i}{2(\lambda+1)} = -\frac{1}{2(\lambda+1)} + \frac{3i}{2(\lambda+1)} . \]
The equality follows by noticing that the real parts of both these points are equal.

This is followed by a rotation by $2\pi/3$ --- that is, finding the image after $R_{\theta_k}$. By the symmetry of the gasket maps under the complex conjugation, we will only need to consider a rotation by $2 \pi / 3$. Under the rotation $e^{2 \pi i / 3}$, the line $L(t)$ becomes
\[
\left(-\frac{1}{2}+i\frac{\lambda}{2} \right) \left(- \frac{1}{2(\lambda+1)}+it \right) = -\left( \frac{\lambda}{2} + i \frac{1}{2}\right) t +\frac{1}{4(\lambda+1)}-i\frac{\lambda}{4(\sqrt{3}+1)}.
\]
Solving for real and imaginary parts to be zero, we see that the new line $\tilde{L}(t)$ passes through the points $\frac{1}{1+\lambda}$ and $-\frac{i}{\lambda(1+\lambda)}$.  

The image after $V^{-1}$ is given by the inversion by $\frac{-1}{z-1}$.\label{sec: inversion} This is a M\"obius transformation, so it maps the line $\tilde{L}(t)$ into a circle. To compute the center and the radius of this circle, notice that 
$$
\begin{aligned}
f_V(\pm\infty)&=0\\
f_V\left(\frac{1}{1+\lambda}\right)&=\alpha,\quad \text{where}\quad \alpha=\frac{1+\lambda}{\lambda},\\
f_V(-i\beta)&=\frac{1}{1+\beta^2}-i\frac{\beta}{1+\beta^2},\quad \text{where}\quad \beta = \frac{1}{\lambda(1+\lambda)}.
\end{aligned}
$$
Thus, we need to compute the center and radius of circle passing through three points $(0,0)$, $(1+1/\lambda,0)$, and $(\frac{1}{1+\beta^2},-\frac{\beta}{1+\beta^2})$,
which is equivalent of solving a $3\times 3$ linear system with the matrix 
\[
A =  \begin{pmatrix}
0 & 0 & 1 \\
2\alpha & 0 &  1\\
\frac{2}{1+\beta^2} & -\frac{2\beta}{1+\beta^2} &  1\\
\end{pmatrix} 
\]
and the right hand side
\[
b =  -\begin{pmatrix}
0 \\
\alpha^2\\
\frac{1}{1+\beta^2} \\
\end{pmatrix} 
\]
Solving, we obtain that the desired center of the circle is $(\frac{1+\lambda}{2\lambda},\frac{\lambda+1}{2})$ and the radius $\rho=\frac{\lambda+1}{\lambda}$.

The image after $J$ is simply the translation by $\frac{n}{\lambda}$, corresponding to the matrix 
$$
\begin{pmatrix}
1 & n/\lambda \\
0 & 1
\end{pmatrix}.
$$
Alternatively, this is the map $z\to z+n/\lambda$, which is just a translation by $n/\lambda$ and the new image is just a circle centred at $(\frac{1+\lambda}{2\lambda}+\frac{n}{\lambda},\frac{\lambda+1}{2})$ of radius $\rho=\frac{\lambda+1}{\lambda}$. We can represent it as
$$
C(t) = \frac{1+\lambda}{2\lambda}+\frac{n}{\lambda}+\frac{\lambda+1}{\lambda}\cos(t)+i\left(\frac{\lambda+1}{2}+\frac{\lambda+1}{\lambda}\sin(t)\right),\quad t\in(0,2\pi).
$$
We could now proceed with the next map $1-\frac{1}{z}$, but we will use a different approach. 
Due to the elementary fact that for functions $g:X \rightarrow Y$ and $h:Y \rightarrow X $, $g(X) \subset h^{-1}(X)$ implies $h(g(X)) \subset X$, a splitting argument for $\phi_{k,n}$ may be used to show that $\phi_{k,n}(X_\delta) \subset X_\delta$. Here the map $g(z)$ is the map corresponding to the product of matrices
\[
\begin{pmatrix}
1 & n/\lambda \\
0 & 1
\end{pmatrix}
\begin{pmatrix}
0 & -1 \\
1 & -1
\end{pmatrix} 
\begin{pmatrix}
e^{i\theta_k} &0 \\
0 & 1 
\end{pmatrix}
\begin{pmatrix}
\lambda -1 & 1 \\
-1 & \lambda +1
\end{pmatrix} 
\]
and the map $h(z)$ to the product of matrices
\[
  \begin{pmatrix}
e^{i \theta_k'} & 0 \\
0 & 1
\end{pmatrix} 
\begin{pmatrix}
-1 & 1 \\
-1 & 0
\end{pmatrix} 
\]
Naturally the rotation 
 leaves $X_\delta$ invariant. Since
\[\begin{pmatrix}
-1 & 1 \\
-1 & 0
\end{pmatrix}^{-1}=\begin{pmatrix}
0 & -1 \\
1 & -1
\end{pmatrix},\]
we need to find the image of $B(0,\delta)$ under the M\"obius map $f_V(z)=-\frac{1}{z-1}.$ We proceed similarly when we treated $V^{-1}$, consider the image of three points $(1+\lambda)$, $(-1-\lambda)$, and $i(1+\lambda)$.
$$
\begin{aligned}
f_V(1+\lambda)&=-\frac{1}{\lambda}\\
f_V(-1-\lambda)&=\frac{1}{2+\lambda}\\
f_V(i(1+\lambda))&=\frac{1}{1+\beta^2}+i\frac{\beta}{1+\beta^2},\quad \text{where}\quad \beta =1+\lambda.
\end{aligned}
$$
Thus, we need to compute the center and radius of circle passing through three points $(-\frac{1}{\lambda},0)$, $(\frac{1}{2+\lambda},0)$, and $(\frac{1}{1+\beta^2},\frac{\beta}{1+\beta^2})$,
which is equivalent to solving a $3\times 3$ linear system with the matrix 
\[
A =  \begin{pmatrix}
-\frac{2}{\lambda} & 0 & 1 \\
\frac{2}{2+\lambda} & 0 &  1\\
\frac{2}{1+\beta^2} & -\frac{2\beta}{1+\beta^2} &  1\\
\end{pmatrix} 
\]
and the right hand side
\[
b =  -\begin{pmatrix}
\frac{1}{\lambda^2} \\
\frac{1}{(2+\lambda)^2}\\
\frac{1}{1+\beta^2} \\
\end{pmatrix} 
\]
with $\beta=1+\lambda$.
Solving, we obtain that the center of the circle is $(-\frac{1}{\lambda(2+\lambda)},0)$ and the radius is $\rho=\frac{1+\lambda}{\lambda(2+\lambda)}$.

To conclude $\phi_{k,n}(X_\delta)\subset X_\delta$, we only need to establish that the distance between centers of the circles $c_1=(-\frac{1}{\lambda(2+\lambda)},0)$ and $c_2=(\frac{1+\lambda}{2\lambda}+\frac{n}{\lambda},\frac{\lambda+1}{2})$ is greater than the sum of the radii $\rho_1=\frac{1+\lambda}{\lambda(2+\lambda)}$ and $\rho_2=\frac{\lambda+1}{\lambda}$.
Magically,
$$
\rho_1+\rho_2=\frac{1+\lambda}{\lambda(2+\lambda)}+\frac{\lambda+1}{\lambda}=\frac{\lambda+1}{\lambda}\cdot\frac{3+\lambda}{2+\lambda}=2
$$
and the direct computations show that even for $n=1$,
$$
\operatorname{dist}(c_1,c_2)=2.0442\dots>2,
$$
and of course the above distance is even greater for $n\geq 2$. \end{proof}

\begin{figure}
    \centering
    \includegraphics[width = .5\textwidth]{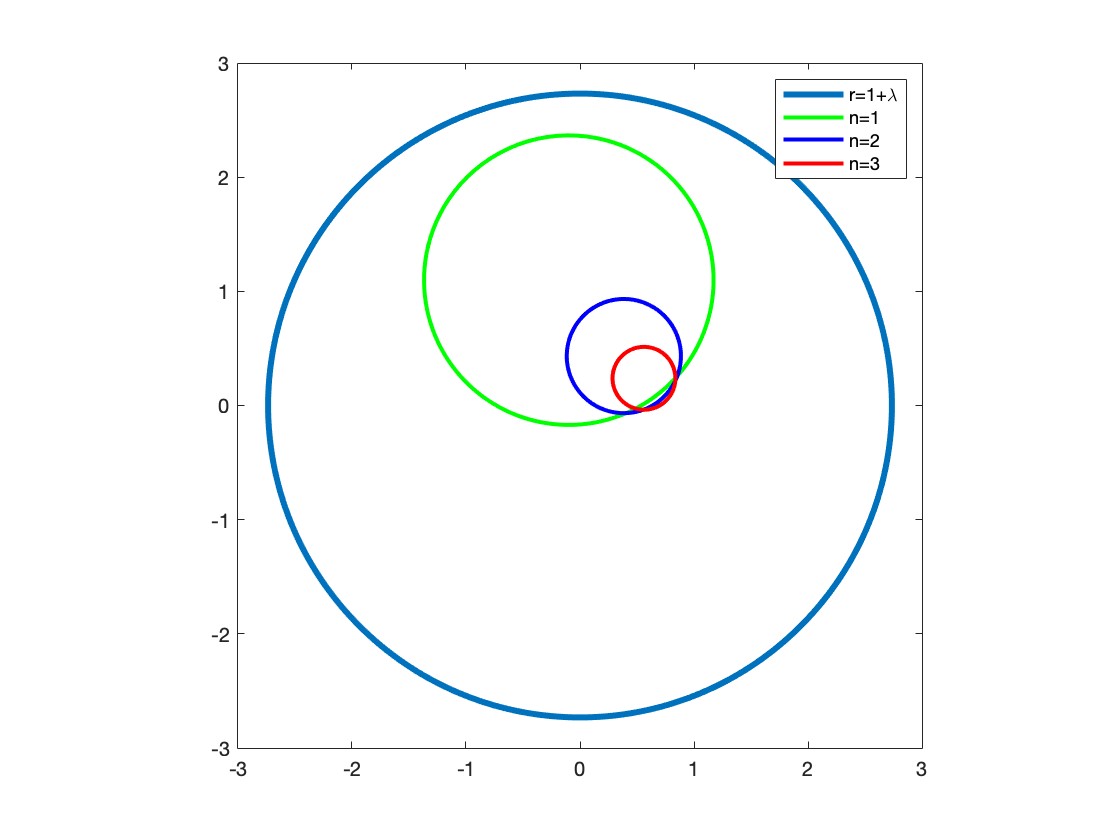}
    \caption{illustration of inclusion, for $n =1,2,3$.}
\end{figure}

\subsubsection{Tail Bounds}
 In this section we find tail bounds for the Apollonian gasket. As mentioned with continued fractions, such bounds are necessary for rigorous Hausdorff dimension estimates of infinite systems, though the structure of such bounds will change depending on the system. Generally, an ordering needs to be given on the maps of the system, which in this case is given in it's definition. 
 
 Recall that any M\"obius transformation  
\[g(z) = \frac{az+b}{cz+d}\]
has a matrix representation
\[M_g = \begin{pmatrix}
a & b \\ c & d
\end{pmatrix},\]
and the norm of its derivative at $z \in \C$ is given by the formula
\begin{equation}\label{eq:mobius-deriv}
  |Dg(z)| = \frac{|\det(M_g)|}{|cz+d|^2} .  
\end{equation}
As in the previous section, the matrix form for $\Phi_{k,n}$ is
\[
\Phi_{k,n} = \underbrace{\lambda^n \begin{pmatrix}
e^{i \theta_k'} & 0 \\
0 & 1
\end{pmatrix} 
\begin{pmatrix}
-1 & 1 \\
-1 & 0
\end{pmatrix} 
\begin{pmatrix}
1 & n/\lambda \\
0 & 1
\end{pmatrix}
\begin{pmatrix}
0 & -1 \\
1 & -1
\end{pmatrix} }_{ = \ R_{\theta_k'} \circ f^n}
\overbrace{
\begin{pmatrix}
e^{i\theta_k} &0 \\
0 & 1 
\end{pmatrix}
\begin{pmatrix}
\lambda -1 & 1 \\
-1 & \lambda +1
\end{pmatrix}}^{ = \ R_{\theta_k} \circ f} .
\]
Finding tail bounds for the system will amount to applying \eqref{eq:mobius-deriv} and the chain rule. Focusing on the rightmost matrices, note that $R_{\theta_k}$ is just a rotation by $\theta_k$, and thus leaves the derivative unchanged. Taking the determinant,
\[ 
\det  
\begin{pmatrix}
\lambda -1 & 1 \\
-1 & \lambda +1
\end{pmatrix} = \lambda^2 - 1 + 1 = \lambda^2 .
\]
The $c$ and $d$ terms for the map are $ -1$ and $\lambda+1$, respectively, so the derivative will be maximized when 
\[ |-z+\lambda+1|^2\]
is minimized. This is at $z=1$, giving the derivative $\lambda^2 / \lambda^2 =1$. Hence we have that
\[
\norm{D\Phi_{k,n}(z)} \leq \norm{D R_{\theta_k'}\circ f^n(R_{\theta_k}(f(z))) } \norm{D(R_{\theta_k}\circ f)}_\infty= \norm{D (R_{\theta_k'}\circ f^n(R_{\theta_k}(f(z))) }.
\]
We need to find $R_{\theta_k}\circ f (\D)$. Since $f$ is symmetric about the real axis, the points 
\[ f(-1) = \frac{2-\lambda}{2+\lambda} \quad \text{ and } \quad f(1) = 1\]
are antipodal points on $f(\D)$. Thus $f(\D) =B(\frac{2}{2+\lambda},\frac{\lambda}{2+\lambda}).$ Without loss of generality, suppose that $\theta_k = \frac{2 \pi }{3}$. Then rotating $f(\D)$ by $e^{2\pi i/3}$ gives 
$$
R_{\theta_k} \circ f(\D) = B\left(-\frac{1}{2+\lambda}+\frac{\lambda}{2+\lambda}i, \frac{\lambda}{2+\lambda}\right).
$$
Moving onto the next three maps, note that the final map is just a rotation by $\theta_k'$, and therefore doesn't change the norm of the derivative. Hence we can omit it from our calculations. Furthermore,
\[\lambda^n \begin{pmatrix}
-1 & 1 \\
-1 & 0
\end{pmatrix} 
\begin{pmatrix}
1 & n/\lambda \\
0 & 1
\end{pmatrix}
\begin{pmatrix}
0 & -1 \\
1 & -1
\end{pmatrix}   = \lambda^n\begin{pmatrix}
    - \frac{n}{\lambda} + 1 & \frac{n}{\lambda} \\
    - \frac{n}{\lambda} & \frac{n}{\lambda}+1
\end{pmatrix},\]
implying that 
\[
\det \left( \lambda^n\begin{pmatrix}
    - \frac{n}{\lambda} + 1 & \frac{n}{\lambda} \\
    - \frac{n}{\lambda} & \frac{n}{\lambda}+1
\end{pmatrix}\right) = \det \left( \begin{pmatrix}
\lambda^n & 0 \\
0 & \lambda^n \end{pmatrix}
\begin{pmatrix}
    - \frac{n}{\lambda} + 1 & \frac{n}{\lambda} \\
    - \frac{n}{\lambda} & \frac{n}{\lambda}+1
\end{pmatrix}\right)= \lambda^{2n} .
\]
Referring back to \eqref{eq:mobius-deriv}, this implies that 
\[
\|D \Phi_{k,n}\| = \frac{\lambda^{2n}}{\lambda^{2n}} \max_{z \in R_{\theta_k}\circ f(\D)} \frac{1}{\left|-\frac{n}{\lambda}z+1+\frac{n}{\lambda}\right|^2}=\left(\frac{\lambda}{n}\right)^2\max_{z \in R_{\theta_k}\circ f(\D)} \frac{1}{\left|z-1-\frac{\lambda}{n}\right|^2}.
\]
Notice that the above maximum occurs at $z\in B\left(-\frac{1}{2+\lambda}+\frac{\lambda}{2+\lambda}i, \frac{\lambda}{2+\lambda}\right)$ that minimizes
$\left|z-1-\frac{\lambda}{n}\right|.$

It is well known from basic complex analysis that the minimum of $|z-a|$ on the circle $|z-z_0|=r$ is attained for $$
z=a+\left(1-\frac{r}{|z_0-a|}\right)(z_0-a),
$$
with $a=1+\frac{\lambda}{n}$, $r=\frac{\lambda}{2+\lambda}$, and $z_0=-\frac{1}{2+\lambda}+\frac{\lambda}{2+\lambda}i$, we have
$$
\max_{z \in R_{\theta_k}\circ f(\D)} \frac{1}{\left|z-1-\frac{\lambda}{n}\right|^2}
=\frac{1}{\left(|z_0-a|-r\right)^2}=\frac{(2+\lambda)^2}{\left(|-1+\lambda i-(1+\lambda/n)(2+\lambda))|-\lambda\right)^2}.
$$
Using  that $\lambda=\sqrt{3}$, we compute,
$$
\|D \Phi_{k,n}\| =\frac{\lambda^2}{n^2}\frac{(2+\lambda)^2}{\left(|-1+\lambda i-(1+\lambda/n)(2+\lambda))|-\lambda\right)^2}\le \frac{\lambda^2}{n^2}\frac{(2+\lambda)^2}{\left(|-1+\lambda i-2-\lambda))|-\lambda\right)^2}< \frac{3 \times 1.28}{n^2}.
$$

After a simple application of the integral comparison test, one finds that
\begin{align*}
    \sum_{k \in \{1,...,6 \}, \ n=N+1}^\infty \norm{D \Phi_{k,n}}_\infty^t &\leq \sum_{k \in \{1,...,6 \}, \ n=N+1}^\infty \norm{D \Phi_{k,n}}^t \\
    &\leq 6( 3 \times 1.28)^t \int_{N+1}^\infty x^{-2t}dx < 6\times 4^t\times \frac{1}{2t-1} N^{-2t+1} .
\end{align*}

%%%%%%%%%%%%%%%%%%%%%%%%%%%%%%%%%%%%%%%%%%%%%%%%%%%%%%%%%%%%%
\section{Hausdorff dimension estimates}

In this section, for the concrete example from the previous section, we provide the estimates for all the constants and parameters needed for computations and  give  reliable computational range  the Hausdorff dimensions.

\subsection{2-dimensional continued fractions}\label{sec: 2D cont_frac}
In two dimensions $\beta=(\beta_1,\beta_2)$, hence
$$
\sum_{|\beta| = 2} (\beta!)^{-2}=1+\frac{1}{4}+\frac{1}{4}=\frac{3}{2},
$$
and as a result $C_{BH}=3\sqrt{6}$ and by Bramble-Hilbert Lemma \ref{lem:brambhilb}
$$
\|\rho_t-\mathcal{I}_h\rho_t\|_{L^\infty(\tau)}\le 6\sqrt{6}h_\tau^2 |\rho_t|_{W^{2,\infty}(\tau)}.
$$
By Theorem \ref{thm:derivative_bound},  for any $|\alpha|=2$, and taking $\eta=1$,
\begin{equation}
        |D^\alpha \rho_t (x)| \leq \frac{4}{s^2(1-s(2+s))^t} \rho_t(x), \quad \forall x \in \tau.
    \end{equation}
    Thus, we need to obtain an estimate for $\frac{1}{s^2(1-s(2+s))^t}$ which depends on the Hausdorff dimension $t$ of the limit set. Although we do not know this exactly, good upper bounds on the quantity can be applied.
    
    \subsubsection{Alphabet with four smallest generators.} For a simple illustration we consider the alphabet consisting with four generators, $$E_4=\{(1,0), (1,1), (1,-1), (2,0)\}.
    $$ Denoting the limit set of the system by $J_{E_4}$, the upper bound for the $\dimh(J_{E_4}) $ is $1.15$. As a result 
$$
\min_{s\in(0,\sqrt{2}-1)}\frac{1}{s^2(1-s(2+s))^t}\le 41,
$$
combining the estimates we obtain
$$
\|\rho_t-\mathcal{I}_h\rho_t\|_{L^\infty(\tau)}\le 24\cdot 41\sqrt{6}h_\tau^2 \|\rho_t\|_{L^{\infty}(\tau)}.
$$
Naturally, no tail bounds are required in this case. 
Using this estimate, we compute that  $$\dimh(J_{E_4}) \in [1.149571..., 1.149582....].$$

\subsubsection{Infinite lattice alphabet.}
Now we consider the infinite alphabet $E=\mathbb{N}\times \mathbb{Z}$.
\begin{figure}[!htb]  
    \begin{minipage}[t]{0.31\textwidth}
        \centering
        \includegraphics[width = 1.1\textwidth]{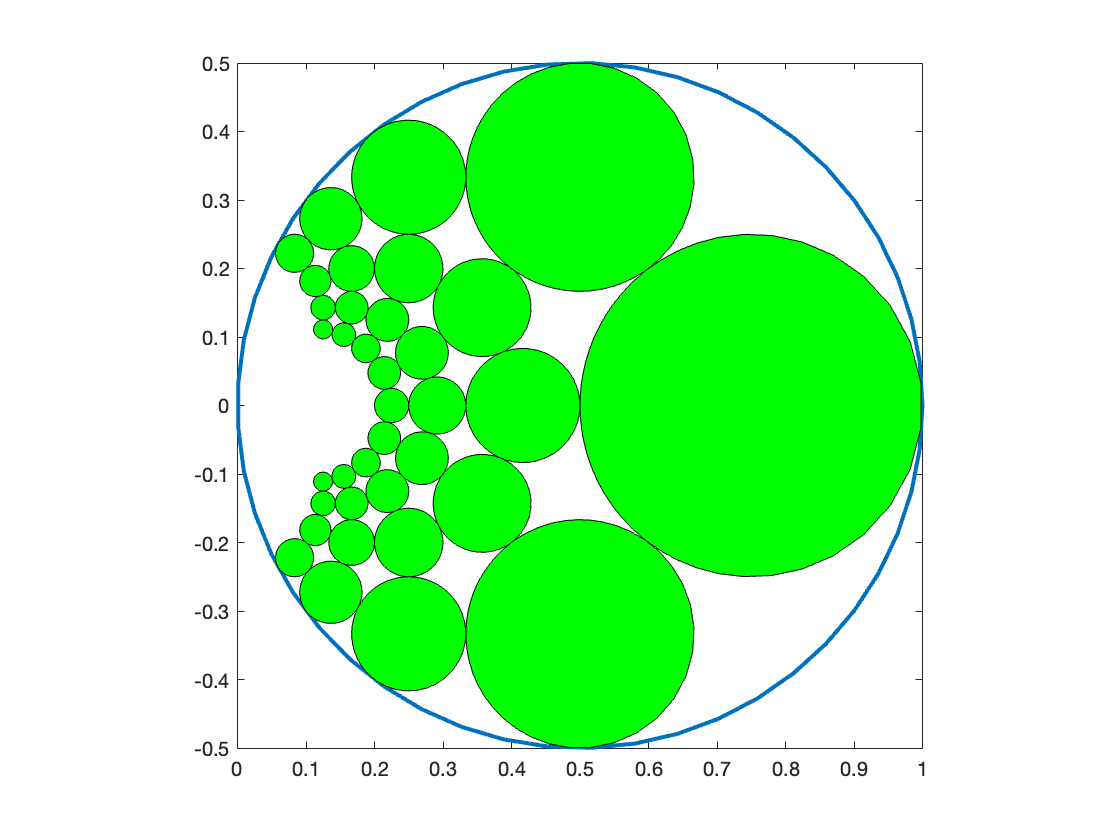}
    \caption{2D cont. frac., first iteration}
    \label{fig:2D_cont_1}
    \end{minipage}
  \hfill
    \begin{minipage}[t]{0.31\textwidth}
        \centering
        \includegraphics[width = 1.1\textwidth]{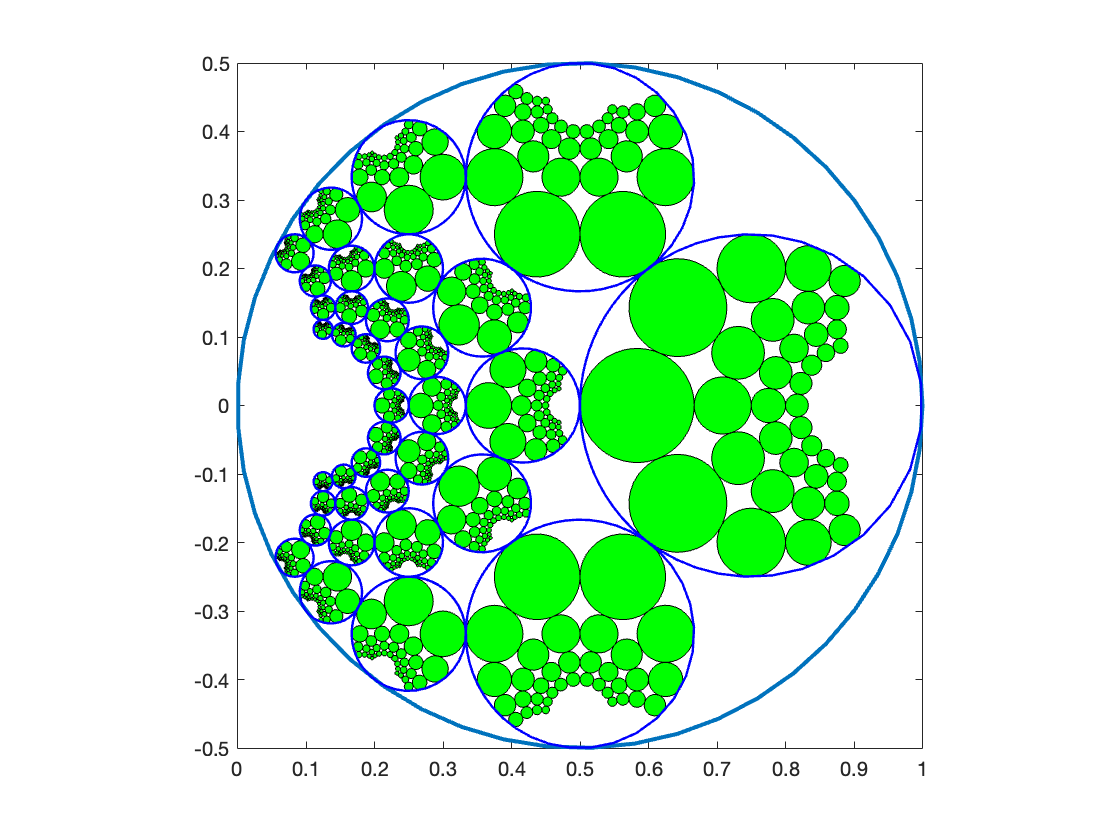}
    \caption{2D cont. frac., second iteration}
    \label{fig:2D_cont_2}
    \end{minipage}
  \hfill
  \begin{minipage}[t]{0.31\textwidth}
    \centering
\includegraphics[width =1.1\textwidth]{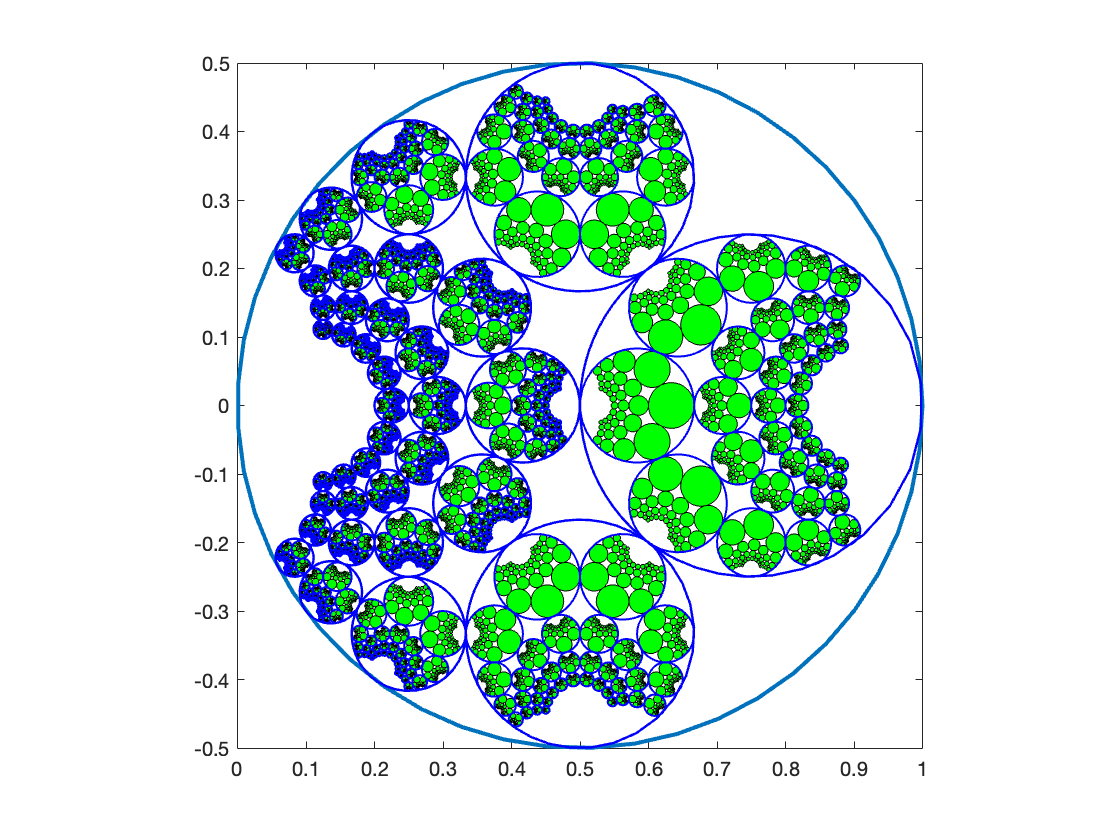}
    \caption{2D cont. frac., third iteration}
    \label{fig:2D_cont_3}
  \end{minipage}
\end{figure}

For this example, we know that $t< 1.86$ and as a result
$$
\min_{s\in(0,\sqrt{2}-1)}\frac{1}{s^2(1-s(2+s))^t}\le 72,
$$
combining, we obtain
$$
\|\rho_t-\mathcal{I}_h\rho_t\|_{L^\infty(\tau)}\le 24\cdot 72\sqrt{6}h_\tau^2 \|\rho_t\|_{L^{\infty}(\tau)}.
$$
For  tail bound we use Lemma \ref{lemma: cont frac tail bound}.  
Thus, since for $t< 1.86$,
$$
\min_{s\in(0,\sqrt{2}-1)}\frac{1}{s(1-s(2+s))^t}\le 14,
$$
we have
$$
\sum_{e \in E\backslash\tilde{E}}\|D\phi_e(x_j)\|^t\mathcal{I}_h\rho_t(\phi_e(x_j))\le \frac{7\sqrt{2}\pi}{2(t-1)}R^{2-2t}\rho_t(0),
$$
and to account for the tail, we need modify $j$-th column and the row of the matrix $\tilde{B}_t$ that corresponds to the zero node. 

Denoting the limit set of this system by $J_E$, our computation found that
$$\dimh(J_E) \in [1.8488 \ldots, 1.8572 \ldots].$$ 

\subsubsection{Gaussian prime alphabet.} 

As an intermediate example, we consider the case when the alphabet consist of Gaussian prime with positive  real parts. For this example, we know that $t< 1.515$ and as a result
$$
\min_{s\in(0,\sqrt{2}-1)}\frac{1}{s^2(1-s(2+s))^t}\le 56,
$$
combining, we obtain
$$
\|\rho_t-\mathcal{I}_h\rho_t\|_{L^\infty(\tau)}\le 24\cdot 56\sqrt{6}h_\tau^2 \|\rho_t\|_{L^{\infty}(\tau)}.
$$
For  tail bound we use Lemma \ref{lemma: cont frac tail bound}. 
Thus, since for $t< 1.515$,
$$
\min_{s\in(0,\sqrt{2}-1)}\frac{1}{s(1-s(2+s))^t}\le 12,
$$
we have
$$
\sum_{e \in E\backslash\tilde{E}}\|D\phi_e(x_j)\|^t\mathcal{I}_h\rho_t(\phi_e(x_j))\le \frac{6\sqrt{2}\pi}{2(t-1)}R^{2-2t}\rho_t(0),
$$
and to account for the tail, we need modify $j$-th column and the row of the matrix $\tilde{B}_t$ that corresponds to the zero node. 

Denote the limit set of this system by $J_{prime}$. Then,  
$$ \dimh{J_{prime}} \in [1.5060..., 1.5140...].$$

\subsection{\texorpdfstring{$3$-dimensional continued fractions}{}}

In three dimensions $\beta=(\beta_1,\beta_2, \beta_3)$, hence
$$
\sum_{|\beta| = 2} (\beta!)^{-2}=1+1+1+\frac{1}{4}+\frac{1}{4}+\frac{1}{4}=\frac{15}{4},
$$
and as a result $C_{BH}=6\sqrt{15}$ and by Bramble-Hilbert Lemma \ref{lem:brambhilb}
$$
\|\rho_t-\mathcal{I}_h\rho_t\|_{L^\infty(\tau)}\le 12\sqrt{15}h_\tau^2 |\rho_t|_{W^{2,\infty}(\tau)}.
$$
By Theorem \ref{thm:derivative_bound},  for any $|\alpha|=2$, and taking $\eta=1$,
\begin{equation}
        |D^\alpha \rho_t (x)| \leq \frac{6}{s^2(1-s(2+s))^t} \rho_t(x), \quad \forall x \in \tau.
    \end{equation}

    \subsubsection{Alphabet with five smallest generators.} First, we consider the alphabet consisting with five generators, $$E_5=\{(1,0,0), (1,1,0), (1,-1,0), (1,0,1),(1,0,-1)\}.
    $$ Denoting the limit set of the system by $J_{E_5}$, the upper bound for the $\dimh(J_{E_5}) $ is $1.46$. As a result 
$$
\min_{s\in(0,\sqrt{2}-1)}\frac{1}{s^2(1-s(2+s))^t}\le 54,
$$
combining the estimates we obtain
$$
\|\rho_t-\mathcal{I}_h\rho_t\|_{L^\infty(\tau)}\le 72\cdot 54\sqrt{15}h_\tau^2 \|\rho_t\|_{L^{\infty}(\tau)}.
$$
Naturally, no tail bounds are required in this case. 
Using this estimate, we compute that  $$\dimh(J_{E_5}) \in [1.4423,..., 1.4617...].$$

\subsubsection{Infinite lattice alphabet.}
Now we consider the infinite alphabet $E=\mathbb{N}\times \mathbb{Z}^2$. For this example, we know that $t< 2.6$ and as a result
$$
\min_{s\in(0,\sqrt{2}-1)}\frac{1}{s^2(1-s(2+s))^t}\le 112,
$$
combining all estimates we obtain
$$
\|\rho_t-\mathcal{I}_h\rho_t\|_{L^\infty(\tau)}\le 72\cdot 112\sqrt{15}h_\tau^2 \|\rho_t\|_{L^{\infty}(\tau)}.
$$
To account for the tail bound, similarly to 2D case,  we use Lemma \ref{lemma: cont frac tail bound}.  Thus, since for $t< 2.6$,
$$
\min_{s\in(0,\sqrt{2}-1)}\frac{1}{s(1-s(2+s))^t}\le 18,
$$
we have

$$
\sum_{e \in E\backslash\tilde{E}}\|D\phi_e(x_j)\|^t\mathcal{I}_h\rho_t(\phi_e(x_j))\le \frac{36\sqrt{3}\pi}{2t-3}R^{3-2t}\rho_t(0),
$$
and again to account for the tail, we need modify $j$-th column and the row of the matrix $\tilde{B}_t$ that corresponds to the  zero node.

\begin{figure}[!htb]  
    \begin{minipage}[t]{0.48\textwidth}
        \centering
      \includegraphics[width=0.9\linewidth]{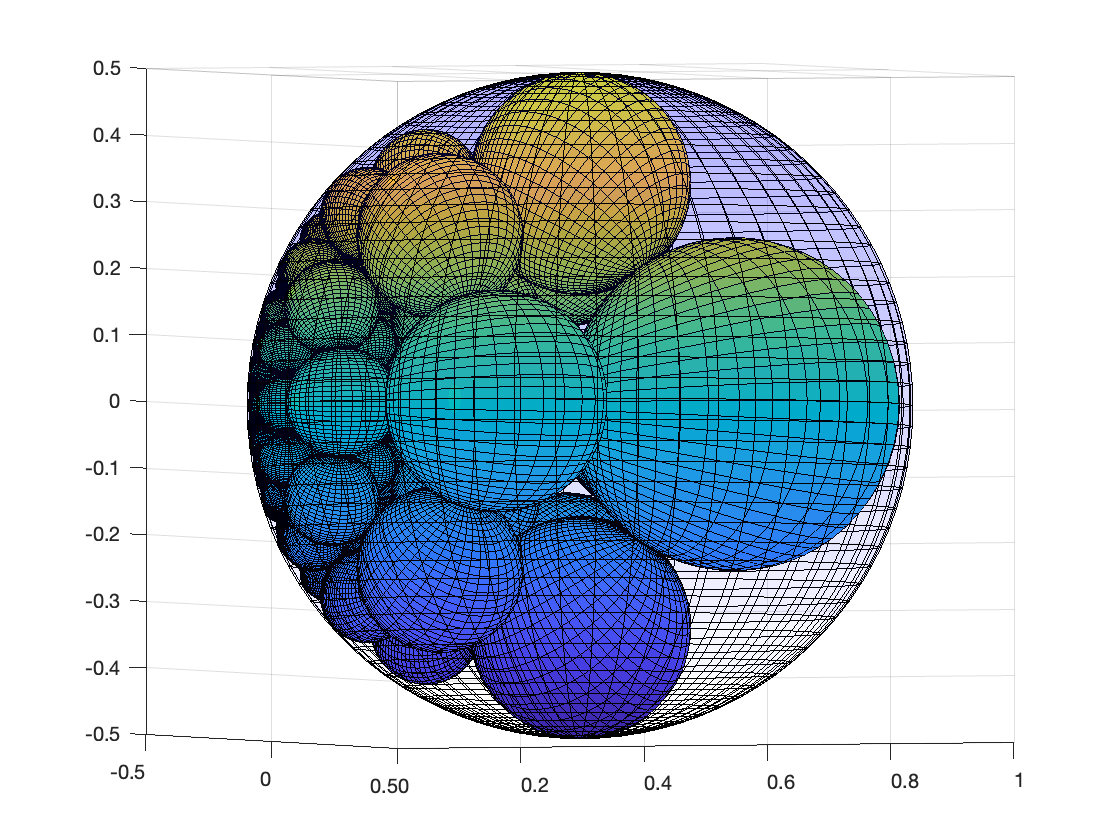}
    \caption{3D continued fraction IFS. First iteration.}
    \label{fig:3D_1}
    \end{minipage}
  \hfill
  \begin{minipage}[t]{0.48\textwidth}
   \centering
      \includegraphics[width=1.0\linewidth]{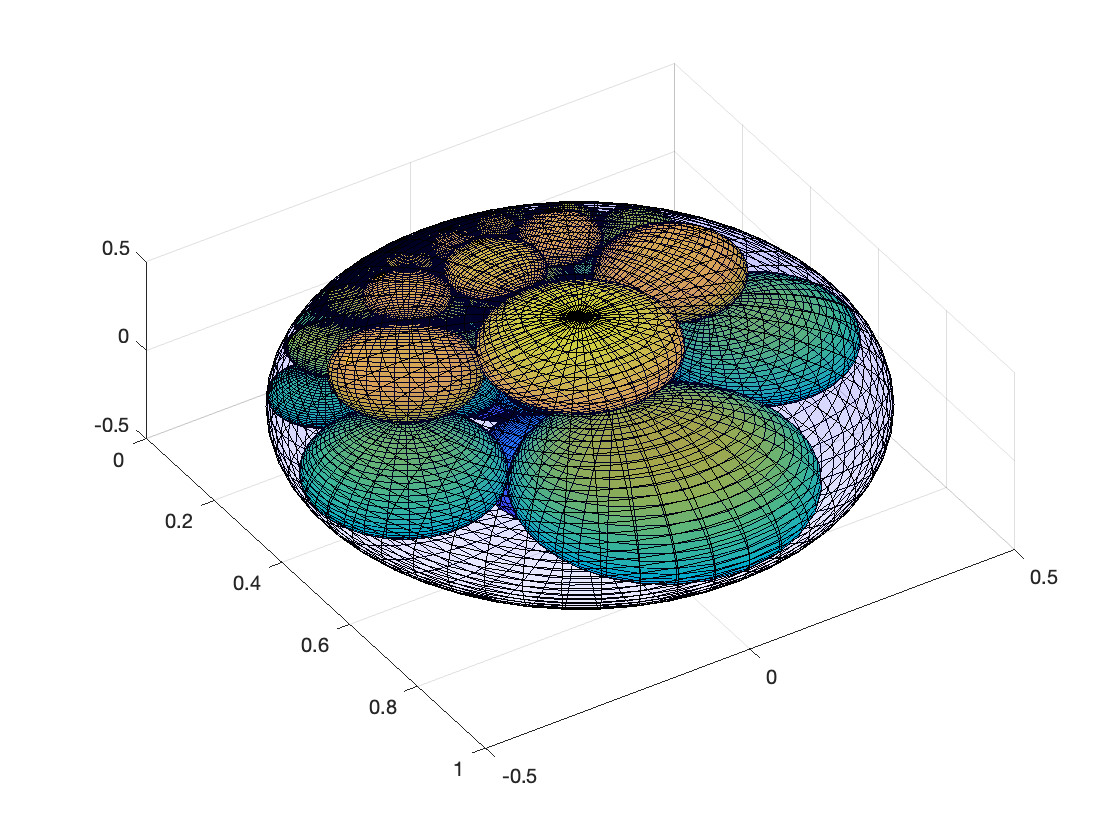}
    \caption{3D continued fraction IFS. First iteration. View above}
    \label{fig:3D_2}
  \end{minipage}
\end{figure}

Suppose that $J_{E_{3D}}$ is the limit set for the above 3-dimensional continued fraction system, we found that $$\dimh(J_{E_{3D}]}) \in [2.56..., 2.58...].$$

\begin{rem}\label{rem: continued fracxtions}
Using the technique of M\"obius transformations for the case of $n$-dimensional continued fractions one can obtain much sharper derivative estimates \cite{FalkRS_NussbaumRD_2016b, wat} and as a result more digits of accuracy can be established. This approach we have explored in \cite{BCLW}. In this article our goal is only to illustrate how the our general error estimates can be applied for various examples. 
\end{rem}

\subsection{Quadratic perturbations of linear maps}
Similarly to Section \ref{sec: 2D cont_frac}, $C_{BH}=\sqrt{6}$ and by Bramble-Hilbert Lemma \ref{lem:brambhilb}
$$
\|\rho_t-\mathcal{I}_h\rho_t\|_{L^\infty(\tau)}\le 6\sqrt{6}h_\tau^2 |\rho_t|_{W^{2,\infty}(\tau)}.
$$
However, since this system does not consist of M\"obius transformations, to estimate $|\rho_t|_{W^{2,\infty}}$ we will use \eqref{eq:dern=2} from  Theorem \ref{thm:derivative_bound}, namely
\begin{equation}
        |D^\alpha \rho_t(x)| \leq \alpha !  \left(\frac{ML}{sd_2}\right)^{|\alpha|} \exp \left(t C_R \left( \frac{L}{L-2} \right)^2 \right) \rho_t(x), \quad \text{for all } x \in X, 
\end{equation}
where $d_2= \dist(X, \partial W)$,  $R,s, M, L$ can be any numbers such that $R \in (0,r), s \in (0,R), M>1, L>2$ and $C_R=\log \left( \frac{1+Rd_2}{(1-Rd_2)^3}\right) + 2\log \left( \frac{1+Rd_2}{1-Rd_2}\right)$.
Since $d_2 = 1$,
\[C_R : = \log \left( \frac{1+Rd_2}{(1-Rd_2)^3}\right) + 2\log \left( \frac{1+Rd_2}{1-Rd_2}\right)=\log \left( \frac{1+R}{(1-R)^3}\right) + 2\log \left( \frac{1+R}{1-R}\right).\]
Setting $s=r=0.2$ and $|\alpha|=2$, we need to optimize the expression
\[
\min_{L>2, \ M >1}\left(\frac{ML}{0.1}\right)^{2} \exp \left(t C_{0.2} \left( \frac{L}{L-2} \right)^2 \right).
\]
As before, this varies depending on the parameter $t$ we are using. Setting $t = 0.633$, an upper bound for our system, we find that
\[
|D^2 \rho_t(x)| \leq  1833\rho_t(x).
\]
for all $x \in X$. Combining this with the Bramble-Hilbert Lemma, we see that
\[\norm{\rho_t - \mathcal{I}_h}_{L^\infty(\tau)}\leq 6 \sqrt{6} \cdot 1833 \norm{\rho_t}_{L^\infty(\tau)}.\]

Denoting the limit set by $J_{abc}$, a resulting computation gave 
$$ 
\dimh(J_{abc}) \in [0.63182277...,0.63182280...].
$$

\subsection{Schottky groups}
Error estimates for the Schottky groups are slightly different than continued fractions since $\eta<1$ in our two examples.

\subsubsection{Classical 2D Schottky group}

Since we are in two dimensions, $\beta = (\beta_1,\beta_2)$ implying $C_{BH} = 3\sqrt{6}$. However, the optimization problem involving $c(s)$ needs to  be modified since $\eta<1$. The corresponding minimization problem then is
\[\min_{s \in (0,\sqrt{2}-1)}4 \left( \frac{1}{s \eta} \right)^2\frac{1}{(1-s(2+s))^t}.\]
In our example, $$
\eta=2-\frac{2}{\sqrt{3}}>0.85.
$$
The subsequent minimization is 
\[\min_{s \in (0,\sqrt{2}-1)}4 \left( \frac{1}{0.85 s} \right)^2\frac{1}{(1-s(2+s))^t}.\]
Since an upper bound on the dimension of our Schottky group is 0.3, one finds that 
\[|D \rho_t(x)| \leq  \min_{s \in (0,\sqrt{2}-1)}4 \left( \frac{1}{0.85 s} \right)^2\frac{1}{(1-s(2+s))^t}\rho_t(x)  < 78 \rho_t(x) . \]
Completing our bounds, just recall \eqref{thm:derivative_bound}, and so
\[\norm{\rho_t - \mathcal{I}_h \rho_t}_{L^\infty (\tau)} \leq 6 \sqrt{6} \cdot 78 h_\tau^2 \norm{\rho_t}_{L^\infty(\tau)}. \]

Denote the limit set of the Schottky group by $J_{\text{schotty}}$. Then
\[
\dimh(J_{\text{schotty}}) \in [0.295540..., 0.295552...].
\]

\subsubsection{3D Schottky groups}

Since we are now in three dimensions, similar to 3D continued fractions $C_{BH} = 6\sqrt{15}$. However, as in the 2D case, the optimization problem involving $c(s)$ needs to be modified since $\eta<1$. The corresponding minimization problem  is
\[\min_{s \in (0,\sqrt{2}-1)}6 \left( \frac{1}{s \eta} \right)^2\frac{1}{(1-s(2+s))^t}.\]
In our example, $$
\eta=3\sqrt{2}/2-1>1.12.
$$
The subsequent minimization is 
\[
\min_{s \in (0,\sqrt{2}-1)}6 \left( \frac{1}{1.12 s} \right)^2\frac{1}{(1-s(2+s))^t}.
\]
Since an upper bound on the dimension of our Schottky group is 0.825, one finds that 
\[|D \rho_t(x)| \leq  \min_{s \in (0,\sqrt{2}-1)}6 \left( \frac{1}{1.12 s} \right)^2\frac{1}{(1-s(2+s))^t}\rho_t(x)  < 140 \rho_t(x) . \]
Completing our bounds, just recall \eqref{thm:derivative_bound}, and so
\[
\norm{\rho_t - \mathcal{I}_h \rho_t}_{L^\infty (\tau)} \leq 12 \sqrt{15} \cdot 140 h_\tau^2 \norm{\rho_t}_{L^\infty(\tau)}. 
\]
Denote the limit set of the Schottky group by $J_{\text{schotty3}}$. Then
\[
\dimh(J_{\text{schotty3}}) \in [0.821..., 0.825...].
\]

\subsection{The Apollonian gasket}
The bounds for the Apollonian packing are similar to those on complex continued fractions. Since the generating IFS consists of M\"obius maps, the bounds from the Bramble-Hilbert Lemma remain the same. Specifically, we have that $C_{BH} = 3\sqrt{6}$ so
\[ \norm{\rho_t - \mathcal{I}_h \rho_t}_{L^\infty(\tau)} \leq 6 \sqrt{6}|\rho_t|_{W^{2,\infty}} .\]
However, in the case of Apollonian gasket $\eta=\sqrt{3}$, and as a result applying \eqref{thm:derivative_bound}, we need to optimize the expression
\[
\min_{s \in (0, \sqrt{2}-1)}\frac{6}{(\sqrt{3}s)^{|\alpha|}(1-s(2+s))^t},
\]
when $|\alpha|=2$ and $t$ is an upper bound for the Hausdorff dimension of $J_\mathcal{A}$. Since $t<1.306$, one finds that
\[
\min_{s \in (0, \sqrt{2}-1)}\frac{6}{(\sqrt{3}s)^2(1-s(2+s))^{1.306}}=\min_{s \in (0, \sqrt{2}-1)}\frac{2}{s^2(1-s(2+s))^{1.306}}\le 95
\]
hence  $|D^\alpha \rho_t(x)| \leq 95 \rho_t(x)$ and 
excluding the tail, we find that
\[\norm{\rho_t - \mathcal{I}_h \rho_t}_{L^\infty(\tau)} \leq 570 \sqrt{6} h_\tau^2 \norm{\rho_t}_{L^\infty(\tau)}.\]
Adding in the tail bounds,
\[
\sum_{n=N+1, \ k = 1,..,6}^\infty \norm{D\Phi_{k,n}}_\infty \mathcal{I}_h \rho_t(\phi_e(x))\leq 6\times 4^t\times \frac{1}{2t} N^{-2t+1} \rho_t(0).
\]
As shown below, similar bounds will hold for each subsystem we consider. For the limit set $J_\mathcal{A}$ of the Apollonian gasket, we have that
\[
\dimh(J_\mathcal{A}) \in [1.30540..., 1.30586...].
\]

\subsubsection{A Finite Apollonian Subsystem}

To be able to compute accurately the Hausdorff dimensions for various Apollonian gasket subsystems is essential in our approach to establish the dimensions spectrum for Apollonian gasket \cite{CLUW_2025}. For the illustration,
the first subsystem of the Apollonian gasket we consider is a finite subsystem consisting of the first 12 maps in it's standard enumeration. In particular, this is given by
\[\mathcal{A}|_{12} = \left\{ \phi_{k,n}: k = 1,\ldots,6 \text{ and } n = 1,2\right\},\]
with corresponding limit set $J_{\mathcal{A}|_{12}}$. This system exhibits our methods capabilities to estimate systems without the quadratic decaying tails seen in other examples. As such, the bounds for it are similar to the original gasket. In this case, $ t<1.115$, so optimizing the expression yields
\[\min_{s \in (0, \sqrt{2}-1)}\frac{2}{s^2(1-s(2+s))^{1.115}} < 80\]
and so the Bramble-Hilbert lemma implies
\[\norm{\rho_t - \mathcal{I}_h \rho_t}_{L^\infty(\tau)} \leq 480 \sqrt{6}  h_\tau^2 \norm{\rho_t}_{L^\infty(\tau)}.\]

\begin{figure}[!htb]  
    \begin{minipage}[t]{0.48\textwidth}
        \centering
        \includegraphics[width = 0.9\textwidth]{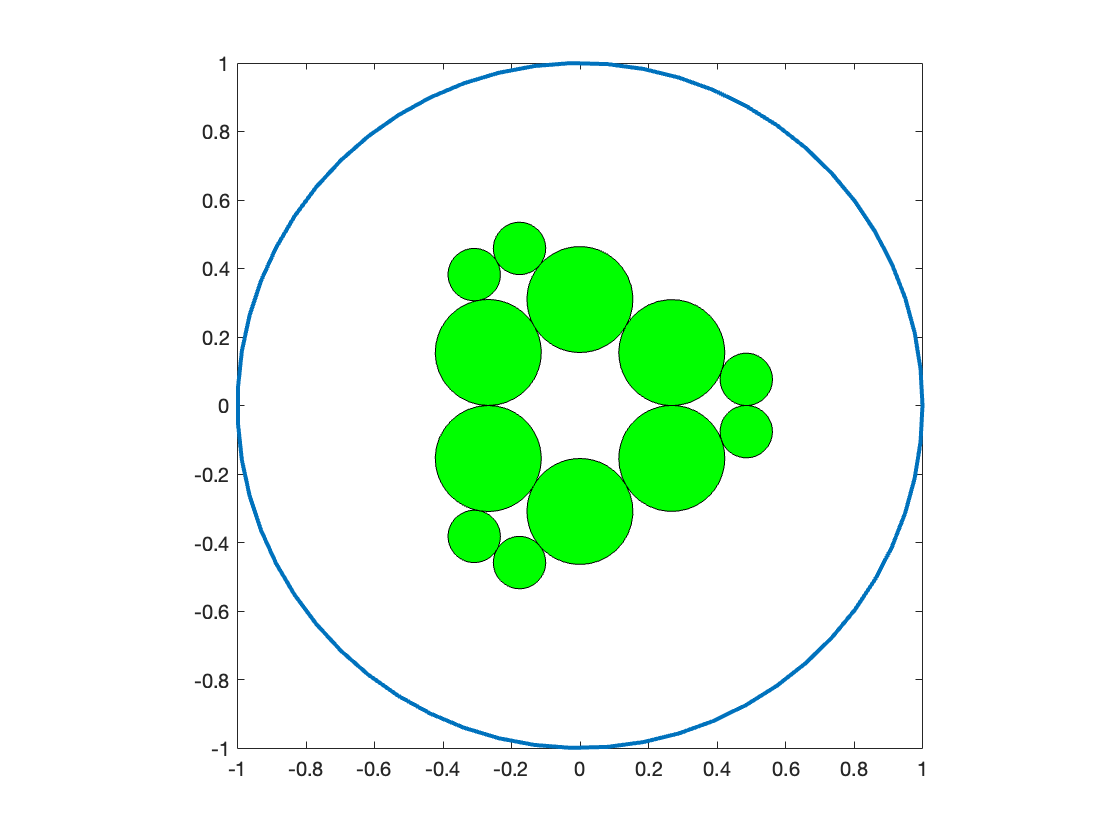}
    \caption{The Apollonian gasket with first 12 generators, first iteration}
    \label{fig:Apollonian12_1}
    \end{minipage}
  \hfill
  \begin{minipage}[t]{0.48\textwidth}
    \centering
\includegraphics[width = .9\textwidth]{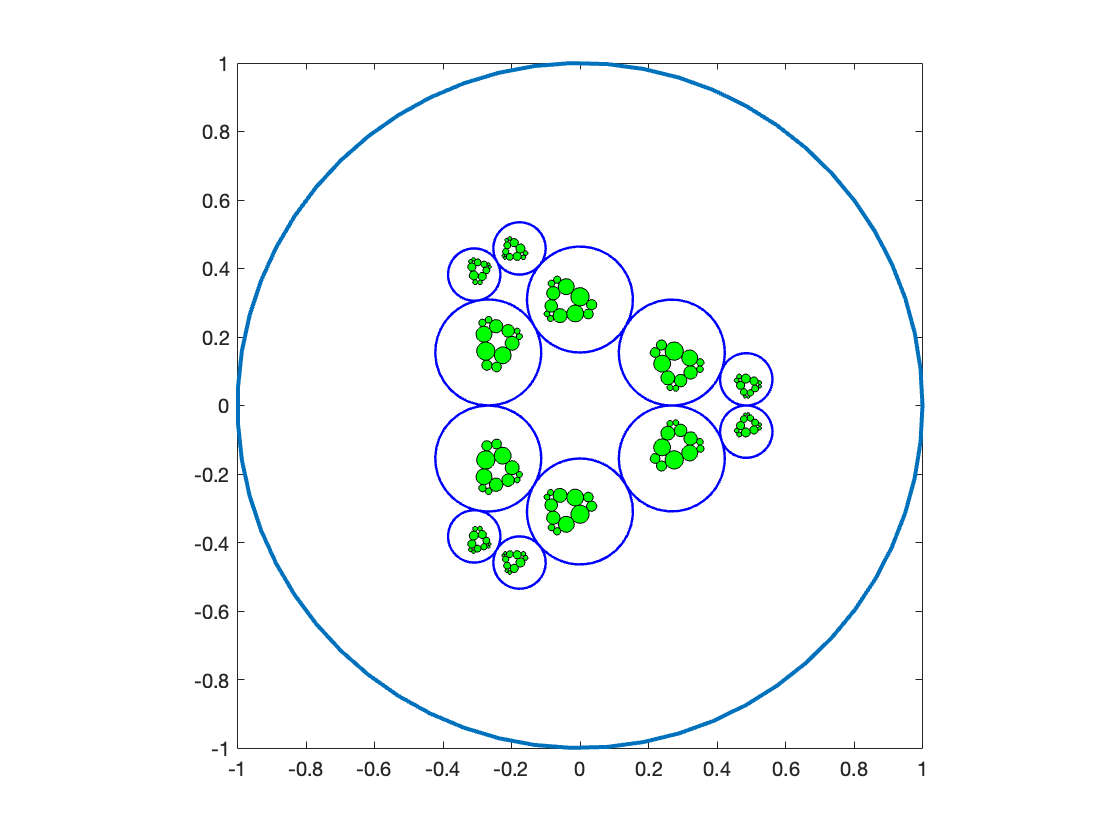}
    \caption{The Apollonian gasket with first 12 generators, first iteration}
    \label{fig:Apollonian12_2}
  \end{minipage}
\end{figure}

In our numerical experiments we found that
\[\dimh(\mathcal{A}|_{12}) \in [1.114047...,1.114066...].\]
\subsubsection{The Apollonian gasket subsystems, a packing without 3 generators}
\begin{figure}[!htb]  
    \begin{minipage}[t]{0.48\textwidth}
        \centering
        \includegraphics[width = 0.9\textwidth]{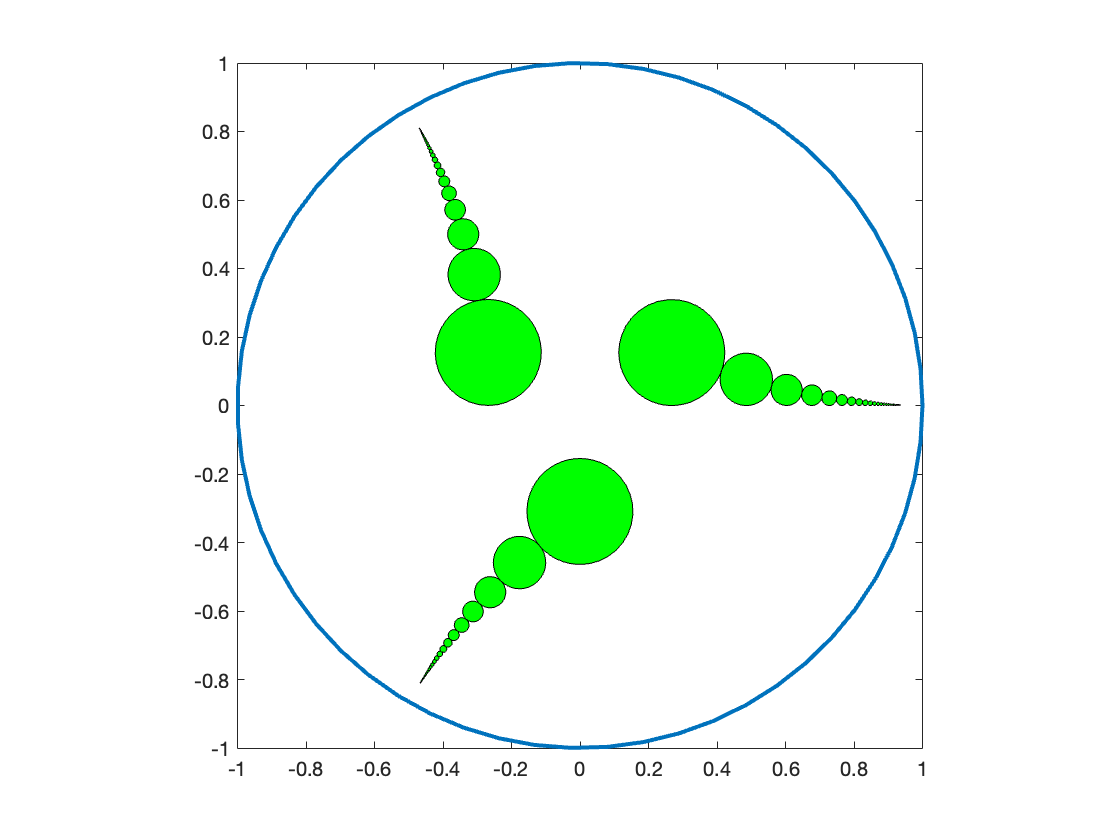}
    \caption{The Apollonian gasket with 3 generators, first iteration}
    \label{fig:Apollonian3_1}
    \end{minipage}
  \hfill
  \begin{minipage}[t]{0.48\textwidth}
    \centering
\includegraphics[width = .9\textwidth]{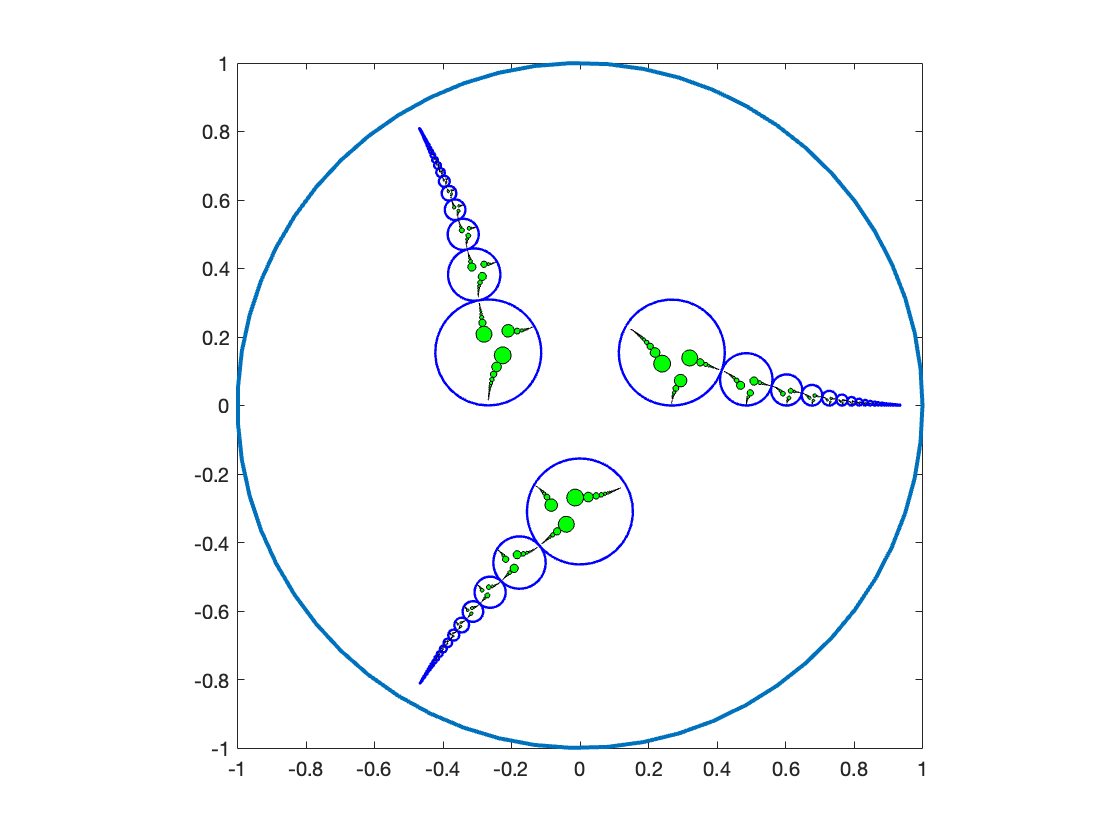}
    \caption{The Apollonian gasket with 3 generators, second iteration}
    \label{fig:Apollonian3_2}
  \end{minipage}
\end{figure}

Due to the flexibility of our method, we can find rigorous Hausdorff dimension estimates for infinite subsystems of the Apollonian gasket. Starting with one of the simplest subsystems, we consider the fractal generated from the Apollonian gasket with half generators only. Specifically, let 
\[ \mathcal{A}_{odd} = \left\{ \phi_{k,n}: k = 1,3,5 \text{ and } n \in \N \right\},\]
with corresponding limit set $J_{\mathcal{A}_{odd}}$. Being a subsystem, all of the previous bounds carry over. In this case, taking $t<1.08$ one finds
\[\min_{s \in (0, \sqrt{2}-1)}\frac{2}{s^2(1-s(2+s))^{1.08}} < 77\]
and hence, excluding the tail
\[\norm{\rho_t - \mathcal{I}_h \rho_t}_{L^\infty(\tau)} \leq 462 \sqrt{6} h_\tau^2 \norm{\rho_t}_{L^\infty(\tau)}.\]

The appropriate tail bounds in this situation are
\[
\sum_{n=N+1, \ k = 1,3,5}^\infty \norm{D\Phi_{k,n}}_\infty \mathcal{I}_h \rho_t(\phi_e(x))\leq 3\times 4^t\times \frac{1}{2t} N^{-2t+1} \rho_t(0).
\]
We find
\[\dimh(J_{\mathcal{A}_{odd}}) \in [1.07269..., 1.07293...].\]

%%%%%%%%%%%%%%%%%%%%%%%%%%%%%%%%%%%%%%%%%%%%%%%%%%%%%%%%%
%\section{Conclusion}

%We have presented a unified approach of establishing rigorous Hausdorff dimensions of various fractal sets arising as limit sets of maximal conformal graph directed Markov systems in $\R^n, n \geq 2$. The paper can be thought of as a proof of concept. However, we realize that most estimates presented above are very pessimistic and in each particular case better estimates are available.  

\bibliographystyle{plain}
    \bibliography{biblio2.bib}
\end{document}